\documentclass[11pt]{amsart}
\usepackage[margin=1in]{geometry}
\usepackage{amssymb,amsmath,amsthm,mathtools}
\usepackage{enumitem}
\usepackage{booktabs}
\usepackage{array}
\usepackage{graphicx}
\usepackage[hidelinks]{hyperref}
\hypersetup{pdftitle={A proof of Wang's planar D-coloring conjecture},pdfauthor={Harshit Verma}}

\newtheorem{theorem}{Theorem}[section]
\newtheorem{lemma}[theorem]{Lemma}
\newtheorem{corollary}[theorem]{Corollary}
\theoremstyle{definition}

\newtheorem{remark}[theorem]{Remark}
\numberwithin{equation}{section}

\newcommand{\chiD}{\chi'_{\mathrm D}}
\newcommand{\Pal}{\mathcal{C}}
\DeclareMathOperator{\Bk}{B}

\setlist[enumerate]{label=\textup{(\roman*)},leftmargin=*}

\begin{document}

\title{A proof of Wang's planar $D$-coloring conjecture}
\author{Harshit Verma}
\address{Yale University, New Haven, CT, USA}
\email{harshit.verma@yale.edu}
\subjclass[2020]{Primary 05C15; Secondary 05C10}
\keywords{$D$-coloring, edge coloring, planar graphs, discharging, Combinatorial Nullstellensatz,
computer-assisted proof}

\begin{abstract}
We prove Wang's planar $D$-coloring conjecture.  A $D$-coloring of a graph is a proper edge
coloring in which every subgraph isomorphic to $K_4$ minus an edge is rainbow.  Wang conjectured
that every planar graph of maximum degree $\Delta$ has a $D$-coloring with $9$ colors if
$\Delta=4$, $10$ colors if $\Delta=5$, and $2\Delta-1$ colors if $\Delta\ge6$.  Hu, Kong and Wang
proved the cases $\Delta\le5$ and $\Delta\ge33$.  We prove the sharp bound $2\Delta-1$ for every
$\Delta\ge6$, settling the remaining cases.  The proof uses discharging; the case $\Delta=6$ is
computer-assisted.
\end{abstract}

\maketitle

\section{Introduction}

A \emph{diamond} is the graph obtained from $K_4$ by deleting one edge.  Its two vertices of
degree three form its \emph{spine}, and the other two vertices are its \emph{tips}.  A
\emph{$D$-coloring} of a graph is a proper edge coloring in which the five edges of every diamond
subgraph receive distinct colors.  The diamond need not be induced; in particular all six edges
of a $K_4$ receive distinct colors.  Write $\chiD(G)$ for the least number of colors in a
$D$-coloring of $G$.

Wang introduced this parameter in \cite{Wang} and gave a greedy upper bound for general graphs,
which Tian and Wang \cite{TW} improved for large maximum degree.  For planar graphs Wang proposed
the following bounds \cite[Conjecture~4.1]{Wang}: nine colors when the maximum degree is four,
ten when it is five, and $2\Delta(G)-1$ when it is at least six.  Hu, Kong and Wang \cite{HKW} proved the bounds for
maximum degree at most five and for maximum degree at least $33$.  We prove all remaining cases.
The induction requires a fixed degree bound, so we state the main result in that form.

\begin{theorem}\label{thm:main}
For every integer $D\ge 6$, every finite simple planar graph $G$ with $\Delta(G)\le D$ has a
$D$-coloring with at most $2D-1$ colors.
\end{theorem}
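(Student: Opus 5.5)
We argue by minimal counterexample. Fix $D\ge6$ and let $G$ be a planar graph with $\Delta(G)\le D$ and no $D$-coloring from the palette $\Pal=\{1,\dots,2D-1\}$, chosen with $|V(G)|+|E(G)|$ minimum. Then $G$ is connected. Every proper subgraph of $G$ still has maximum degree at most $D$, so by minimality it has a $D$-coloring with $2D-1$ colors. This is why the theorem is stated with a fixed bound $D$ and not with $\Delta(G)$ itself: deleting an edge may lower $\Delta$, and the inductive hypothesis must still provide $2D-1$ colors.

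The first stage is to establish a list of reducible configurations. For an edge $e=uv$, the colors forbidden to $e$ in a $D$-coloring are the colors on the edges at $u$ and at $v$, at most $d(u)+d(v)-2$ of them. In addition, $e$ may not repeat a color of any edge lying in a common diamond with $e$. These extra edges are the edges between the common neighbours of $u$ and $v$ and the edges joining $u$ or $v$ to the tips or spine partners of triangles through $e$. Many of them are already incident to $u$ or $v$, and the key count is how many new ones appear. The first lemma I would prove is that an edge $uv$ with $d(u)+d(v)\le D+1$ is reducible, or more precisely with a bound adjusted by the number of triangles on $uv$. Delete $uv$, color $G-uv$ by minimality, and check that fewer than $2D-1$ colors are forbidden for $uv$. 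The subtle point is that adding $uv$ creates new diamonds, such as $u,v,w,x$ with $uw,vw,ux,vx$ present. Each of these forbids only edges at $u$ or $v$ plus edges $wx$ between common neighbours, so the count is $d(u)+d(v)-2+\#\{\text{edges among } N(u)\cap N(v)\}$.

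Next come configurations with several low-degree vertices, such as a $2$-vertex or $3$-vertex together with adjacent light vertices, or light vertices on triangles. For these I would delete several edges at once and recolor them jointly, using Hall's theorem or the Combinatorial Nullstellensatz on the list assignment of the uncolored edges. The constraint graph among the uncolored edges is small, and the Nullstellensatz reduces to checking that a specific coefficient of $\prod(x_e-x_f)$ is nonzero. For $\Delta=6$ these lists are tight, and that is where a computer check of coefficients or of list-colorability is needed.

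The second stage is discharging. Give each vertex charge $d(v)-4$ and each face charge $d(f)-4$, so the total charge is $-8$ by Euler's formula. Only $3$-faces and vertices of degree at most $3$ have negative charge. The rules send charge from big vertices, of degree near $D$, to adjacent small vertices and to incident triangles. A vertex $v$ with a small neighbour $u$ must be big because $uv$ is reducible otherwise. I would show that every final charge is nonnegative, which is a contradiction. For large $D$ there is plenty of slack, since $d(v)-4$ is large and big vertices have few small neighbours by the reducible configurations. So the general case $D\ge7$ should follow from a uniform set of rules with a verification by degree cases.

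The main obstacle is $D=6$, where $2D-1=11$ leaves almost no slack. Dense triangulated regions make the diamond constraints bite: a $6$-vertex in a near-triangulation meets many diamonds, and triangles carry charge $-1$. I expect the proof to need a large catalogue of reducible configurations around $6$-vertices with small neighbours and triangle clusters, verified by computer, with list-coloring extensions checked exhaustively. The discharging rules would be tuned together with this catalogue, and checking the final charges at $5$- and $6$-vertices would itself be a finite case enumeration done by machine.
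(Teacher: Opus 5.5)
There is a genuine gap, starting with your first reduction, which the rest of the argument relies on. A $D$-coloring of $G-uv$ is in general not a valid partial coloring of $G$. Whenever $uv$ lies in a diamond whose tips are nonadjacent, deleting $uv$ destroys a conflict between two surviving edges. For instance, if $uv$ is the spine of a diamond with nonadjacent tips $w,x$, then $uw$ and $vx$ have the three cross edges $uv,ux,wv$ in $G$ but only two in $G-uv$. The coloring of $G-uv$ may therefore give them the same color, and no choice of color for $uv$ repairs that. You noticed that restoring $uv$ creates new diamonds, but you assumed they constrain only $uv$.

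Your count is also too small. By Lemma~\ref{lem:partner}, the partners of $uv$ are all edges $tq$ with $t\in N(u)\cap N(v)$ and $q\in(N(u)\cup N(v))\setminus\{u,v,t\}$, not only edges inside $N(u)\cap N(v)$. This is why the paper deletes vertex sets, since an induced subgraph keeps every constraint among surviving edges (Lemma~\ref{lem:safe}). It deletes a single edge only when that edge lies in no triangle (Lemma~\ref{lem:triangle}).

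Even with vertex deletions you lack the ingredient that makes the counts work. The list of a spoke $vu$ loses a color for each partner, and the number of partners grows with the number of triangles on $vt$ and $ut$ for $t\in N(u)\cap N(v)$. With separating triangles present, an edge can lie in many triangles. The paper handles this by passing to an innermost separating triangle $T$. Inside it, every triangle meeting the interior is a face, and every edge touching the interior lies in at most two triangles (Lemma~\ref{lem:innermost}). This yields $|L(vu)|\ge 2D-d(u)-2r_u$ at eligible vertices (Lemma~\ref{lem:star}). The discharging is then run inside $T$ with total charge $-k_1-2k_2$, and the tight boundary case is removed by the octahedral cap (Lemma~\ref{lem:octcap}).

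Beyond this, the proposal names no concrete reducible configuration and no discharging rule, so no contradiction is actually reached. Several steps you treat as routine are not routine:
\begin{itemize}
\item Your charges make $3$-vertices negative. Excluding them takes the long Lemma~\ref{lem:deg3}, which includes an explicit coloring and a color exchange at $D=6$.
\item For $D=7$, the configuration the paper needs at an interior four-vertex is not reducible from list sizes alone. The remark after Lemma~\ref{lem:H4} gives lists of sizes $(7,7,7,7,3,3,3,3)$ on $K_4\vee C_4$ that cannot be colored. So Hall or the Nullstellensatz applied to list sizes does not suffice there; one must use that all lists come from a single coloring of $G-v$.
\item For $D=6$ the paper needs no large catalogue. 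After the face rule alone, a negative eligible vertex lies in a facial triangle with at most one double-boundary vertex (Lemma~\ref{lem:selection}). Deleting that triangle leads to $1{,}292$ finite list-coloring checks.
\end{itemize}
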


The letter $D$ in ``$D$-coloring'' refers to diamonds; the integer $D$ in
Theorem~\ref{thm:main} is a degree bound, fixed throughout the proof, and the palette is
$\Pal=\{1,\dots,2D-1\}$.

\begin{corollary}[Wang's conjecture]\label{cor:wang}
Every finite simple planar graph $G$ satisfies
\[
  \chiD(G)\le
  \begin{cases}
    9, & \Delta(G)\le 4,\\
    10, & \Delta(G)=5,\\
    2\Delta(G)-1, & \Delta(G)\ge 6.
  \end{cases}
\]
\end{corollary}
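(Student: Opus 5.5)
The statement to prove is Corollary~\ref{cor:wang}, and the plan is to obtain it as a direct consequence of Theorem~\ref{thm:main} together with the cited result of Hu, Kong and Wang \cite{HKW}. I split into three cases according to $\Delta(G)$.

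\emph{Case $\Delta(G)\ge 6$.} Set $D=\Delta(G)$. Then $D\ge 6$ and $\Delta(G)\le D$, so Theorem~\ref{thm:main} gives a $D$-coloring of $G$ with at most $2D-1=2\Delta(G)-1$ colors. Hence $\chiD(G)\le 2\Delta(G)-1$.

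\emph{Cases $\Delta(G)\le 4$ and $\Delta(G)=5$.} Here I invoke \cite{HKW}, which proves Wang's bounds for maximum degree at most five: $\chiD(G)\le 9$ when $\Delta(G)=4$, and $\chiD(G)\le 10$ when $\Delta(G)=5$. The corollary states the first bound for all $\Delta(G)\le 4$. If \cite{HKW} only treats $\Delta(G)=4$ exactly, I would handle $\Delta(G)\le 3$ as follows. A $D$-coloring is in particular a proper edge coloring in which each diamond is rainbow. The simplest option is to apply the greedy bound of \cite{Wang}. Alternatively, each edge $e$ conflicts with at most $2(\Delta-1)$ edges sharing an endpoint with it, plus a bounded number of edges that lie in a common diamond with $e$ without being adjacent to it. For $\Delta\le 3$ these non-adjacent conflicts come only from the diamonds through $e$, and a crude count keeps the total number of conflicts below $9$. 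Greedy coloring in any order then uses at most $9$ colors.

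I expect no real obstacle, since all the difficulty lies in Theorem~\ref{thm:main} and \cite{HKW}. The only point needing care is the low-degree range $\Delta(G)\le 3$. There I would first check whether \cite{HKW} or the greedy bound of \cite{Wang} already gives at most $9$ colors, and otherwise carry out the conflict count above explicitly.
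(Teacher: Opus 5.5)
Your proposal is correct and matches the paper's proof: the case $\Delta(G)\ge 6$ follows from Theorem~\ref{thm:main} with $D=\Delta(G)$, and the cases $\Delta(G)\le 5$ are taken from \cite{HKW}, which the paper cites as covering all maximum degrees at most five. Your fallback for $\Delta(G)\le 3$ is therefore unnecessary but sound: by Lemma~\ref{lem:partner} each edge then sees at most $4$ incident edges and at most $2$ partners, so greedy coloring uses at most $7$ colors.
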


\begin{proof}
The first two bounds are proved in \cite{HKW}.  For the last, apply
Theorem~\ref{thm:main} with $D=\Delta(G)$.
\end{proof}

As Wang observed \cite{Wang}, the bound $2D-1$ cannot be lowered.  Let $\Bk_{D-1}$ be the
\emph{book} consisting of an edge $xy$ together with $D-1$ common neighbors $z_1,\dots,z_{D-1}$
of $x$ and $y$.  It is planar and has maximum degree $D$ and $2D-1$ edges.  Any two of its edges
are incident, or are of the form $xz_i,yz_j$ with $i\ne j$ and lie in the diamond with spine $xy$
and tips $z_i,z_j$.  Hence all $2D-1$ edges need distinct colors (see also
Remark~\ref{rem:books}).

\subsection*{Overview}

We take a counterexample with fewest vertices, then fewest edges
(Section~\ref{sec:mincounter}).  It is $3$-connected, has minimum degree at least four,
contains no $\Bk_{D-1}$, and has every edge in a triangle.

If $G$ has separating triangles, we take an innermost one and work only inside it, where every
triangle with an interior vertex bounds a face.  By Euler's formula the charges inside sum to
minus the number of edges from the triangle to its interior (Section~\ref{sec:disk}).  For
background on discharging see \cite{CranstonWest}.

The lists left after deleting one vertex of degree four or five settle $D\ge 9$ directly, and
$D=8$ with a rule moving charge $\tfrac14$ from vertices of degree eight to adjacent vertices of
degree five (Section~\ref{sec:star}).  For $D=7$ we use three more reducible configurations and
a rule moving charge $\tfrac13$ from vertices of degree seven to adjacent vertices of degree four
or five (Section~\ref{sec:d7}).

For $D=6$ we delete a facial triangle containing a vertex of degree at most five.  The possible
neighborhoods of such a triangle lead to $1{,}292$ list-coloring problems, which we check by
computer (Section~\ref{sec:d6} and Appendix~\ref{app:comp}).  This is the only use of a
computer in the proof.

\section{Preliminaries}\label{sec:prelim}

Graphs are finite and simple.  A plane graph is a planar graph with a fixed embedding.  For a
vertex $v$ we write $N(v)$ for its neighborhood and $d(v)=|N(v)|$; a vertex of degree four is a
\emph{four-vertex}, and similarly for other degrees.  For $S\subseteq V(G)$, $G-S$ is the
subgraph induced by $V(G)\setminus S$.  The \emph{link} of $v$ is $G[N(v)]$; in a
plane graph the neighbors of $v$ carry the cyclic order of the rotation at $v$, and the
\emph{rotation cycle} at $v$ joins cyclically consecutive neighbors.  All common-neighbor and
conflict counts refer to the original graph, even when some of its edges are temporarily
uncolored.

\subsection{Conflicts and partial colorings}

Two distinct edges \emph{conflict} (or \emph{see} each other) if they are incident or lie in a
common diamond.  The \emph{conflict graph} has vertex set $E(G)$ and an edge for each
conflicting pair, so a $D$-coloring is exactly a proper vertex coloring of the conflict graph.
A nonincident edge that conflicts with $e$ is a \emph{partner} of $e$.

\begin{lemma}\label{lem:partner}
The partners of an edge $uv$ are exactly the edges $tq$ with
\[
  t\in N(u)\cap N(v),\qquad q\in (N(u)\cup N(v))\setminus\{u,v,t\},\qquad tq\in E(G).
\]
Hence $uv$ sees $d(u)+d(v)-2$ incident edges together with its distinct partners.  Two disjoint
edges $ab,cd$ conflict if and only if at least three of $ac,ad,bc,bd$ are edges.
\end{lemma}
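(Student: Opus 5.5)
The plan is to prove the last sentence of Lemma~\ref{lem:partner} first, since the description of partners follows from it directly.

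\emph{Characterization of disjoint conflicting edges.} Let $ab$ and $cd$ be disjoint edges. Suppose they lie in a common diamond $H$. Then $H$ has exactly four vertices, namely $a,b,c,d$. The only edge missing from $H$ (relative to $K_4$ on these vertices) is neither $ab$ nor $cd$, because both are edges of $H$. So the missing pair is one of $ac,ad,bc,bd$, and the other three are edges of $H\subseteq G$.

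Conversely, suppose at least three of $ac,ad,bc,bd$ are edges. Add $ab$ and $cd$. On $\{a,b,c,d\}$ this gives at least five of the six possible edges. Choose five that include $ab$ and $cd$; they form a diamond containing both edges. If all four cross pairs are present we have a $K_4$, and we simply drop one cross edge. This proves the last sentence.

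\emph{Description of partners.} Let $tq$ be a partner of $uv$, so $\{t,q\}\cap\{u,v\}=\emptyset$. By the characterization, at least three of $tu,tv,qu,qv$ are edges. Hence at least one of $t,q$ is adjacent to both $u$ and $v$; call that endpoint $t$. The other endpoint $q$ is then adjacent to $u$ or to $v$. Since $q\ne t$ and $q\notin\{u,v\}$, this gives $t\in N(u)\cap N(v)$ and $q\in(N(u)\cup N(v))\setminus\{u,v,t\}$, with $tq\in E(G)$.

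Conversely, take $t,q$ of this form. Then $tq$ is disjoint from $uv$. Three of the four cross pairs are edges: $tu$, $tv$, and one of $qu,qv$. By the characterization, $tq$ conflicts with $uv$.

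One point needs care. The statement describes the set of edges $tq$, not ordered pairs $(t,q)$. An edge may be produced twice, once with each endpoint playing the role of $t$, when both endpoints are common neighbors of $u$ and $v$. This is why the lemma speaks of \emph{distinct} partners, and it needs no further argument.

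\emph{Count.} The edges incident to $uv$ are the $d(u)-1$ other edges at $u$ and the $d(v)-1$ other edges at $v$. These sets are disjoint because $G$ is simple, giving $d(u)+d(v)-2$ incident edges. Every incident edge conflicts with $uv$ by definition. Every other conflicting edge is nonincident and hence is a partner. So $uv$ sees exactly these incident edges together with its distinct partners.

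There is no real obstacle in this lemma. The only step needing attention is the converse direction of the characterization, where one must handle the $K_4$ case by choosing a five-edge subset that contains both $ab$ and $cd$.
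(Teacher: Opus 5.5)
Your proof is correct and is essentially the paper's argument, only reordered. The paper derives the partner description directly from the spine/tip structure of a diamond: for spine $st$ and tips $p,q$, its nonincident pairs are $\{sp,tq\}$ and $\{sq,tp\}$. It then proves the three-cross-edges criterion separately at the end. You prove the criterion first and obtain the partner description from it by a pigeonhole count on $tu,tv,qu,qv$, which is equally elementary and lets one fact do both jobs.
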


\begin{proof}
In a diamond with spine $st$ and tips $p,q$, the nonincident pairs are exactly $\{sp,tq\}$ and
$\{sq,tp\}$.  By symmetry we may assume $\{u,v\}=\{s,p\}$; then the other member $tq$ has an
endpoint $t$ adjacent to both $u$ and $v$, and its other endpoint $q$ is adjacent to
$s\in\{u,v\}$.  Conversely, for such $t$ and $q$, the edges $uv,ut,vt,tq$ together with one of
$uq,vq$ form a diamond in which $uv$ and $tq$ are nonincident.  The $d(u)-1$ other edges at $u$ and the $d(v)-1$ other edges at $v$ are
distinct.  Finally, a diamond containing two disjoint edges has exactly their four endpoints and
contains at least three of the four cross edges; conversely, two disjoint edges together with
three cross edges span five edges on four vertices, which form a diamond containing both
edges.
\end{proof}

\begin{remark}\label{rem:books}
The \emph{book} at an edge $st$ consists of $st$ and all edges from $s$ or $t$ to common
neighbors of $s$ and $t$.  A proper edge coloring is a $D$-coloring if and only if every book is
rainbow: a diamond with spine $st$ lies in the book at $st$, and two
edges of a book are incident or lie in a common diamond with spine $st$.  A book at an edge whose
endpoints have degree at most $D$ has at most $2D-1$ edges, with equality exactly when $s$ and $t$
have $D-1$ common neighbors, that is, when the book is a copy of $\Bk_{D-1}$.
\end{remark}

\begin{lemma}\label{lem:safe}
For $S\subseteq V(G)$, a $D$-coloring of $G-S$ is a valid partial $D$-coloring of $G$: no two
conflicting edges of $G-S$ have the same color.  Uncoloring further edges preserves this
property, and the restriction of a $D$-coloring of $G$ to a subgraph is a $D$-coloring of that
subgraph.
\end{lemma}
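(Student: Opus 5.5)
The plan is to reduce everything to one observation: whether two edges conflict depends only on the subgraph induced by their endpoints. By definition, two distinct edges conflict exactly when they are incident or lie in a common diamond. Lemma~\ref{lem:partner} sharpens this for disjoint edges: $ab$ and $cd$ conflict if and only if at least three of $ac,ad,bc,bd$ are edges. So in every case the relation is decided inside the graph induced on at most four vertices, namely the endpoints of the two edges.

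\textbf{First claim.} Let $e,f$ be edges of $G-S$ that conflict in $G$; I will show they conflict in $G-S$. All endpoints of $e$ and $f$ lie in $V(G)\setminus S$.
\begin{itemize}
\item If $e$ and $f$ are incident in $G$, they share an endpoint, so they are incident in $G-S$ as well.
\item If $e=ab$ and $f=cd$ are disjoint, Lemma~\ref{lem:partner} gives at least three cross edges among $ac,ad,bc,bd$ in $G$. These cross edges join vertices of $V(G)\setminus S$. Since $G-S$ is the \emph{induced} subgraph on $V(G)\setminus S$, all of them survive in $G-S$. Applying Lemma~\ref{lem:partner} to $G-S$, the edges $e$ and $f$ conflict there.
\end{itemize}
Now take a $D$-coloring of $G-S$. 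Two edges of $G-S$ that conflict in $G$ also conflict in $G-S$, so they receive different colors. Hence the coloring is a valid partial $D$-coloring of $G$: no two conflicting colored edges share a color. The step needing care is this one, since the argument uses that $G-S$ is induced; for a non-induced subgraph a missing cross edge could destroy a conflict.

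\textbf{Second claim.} Uncoloring edges only shrinks the set of colored edges. Any pair of conflicting colored edges in the smaller set was already such a pair before, so it still has distinct colors, and validity is preserved.

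\textbf{Third claim.} Let $H$ be any subgraph of $G$, and take two edges of $H$ that conflict in $H$. Either they are incident in $H$, hence in $G$, or they lie in a diamond subgraph of $H$, which is also a diamond subgraph of $G$. So they conflict in $G$, and a $D$-coloring of $G$ gives them distinct colors. Properness of the restriction is the incident case of this argument. Therefore the restriction to $H$ is a $D$-coloring of $H$. Here the direction is reversed relative to the first claim, so no inducedness is needed.
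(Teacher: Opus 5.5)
Your proof is correct and follows the paper's argument. Incidences are preserved in $G-S$, and for two disjoint edges the witnessing structure lies on their four endpoints, which the induced subgraph $G-S$ keeps. You express this through the three-cross-edge criterion of Lemma~\ref{lem:partner}, the paper through the diamond having exactly those four vertices, and your uncoloring and restriction steps match the paper's.
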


\begin{proof}
Incidences between edges of $G-S$ are the same in $G$.  If two disjoint edges of $G-S$ lie in a
diamond of $G$, that diamond has exactly their four endpoints, none of which is in $S$; so the
diamond is a subgraph of the induced graph $G-S$, and the constraint was already enforced.
Uncoloring cannot create a monochromatic conflicting pair.  Every incidence or diamond of a
subgraph is also one of $G$.
\end{proof}

Deleting a single edge is different: if it is the spine of a diamond whose tips are nonadjacent,
the two nonincident pairs of that diamond survive but need no longer conflict.  We delete an edge
alone only after showing that it lies in no triangle, hence in no diamond
(Lemma~\ref{lem:triangle}).

\begin{lemma}\label{lem:lists}
Let $S\subseteq V(G)$, start from a $D$-coloring of $G-S$, uncolor some further edges, and let
$U$ be the set of uncolored edges of $G$.  For $e\in U$ let $L(e)$ be the set of colors of $\Pal$ not used on any
colored edge that sees $e$.  If at most $I(e)$ colored incident edges and at most $P(e)$
colored partners see $e$, then $|L(e)|\ge |\Pal|-I(e)-P(e)$.  Every proper coloring of the
conflict graph on $U$ from the lists $L$ completes the partial coloring to a $D$-coloring.
\end{lemma}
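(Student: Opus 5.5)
The proof has two parts: a counting bound for $|L(e)|$, and a check that list colorings of the uncolored edges combine with the existing coloring into a $D$-coloring. Both follow from the definitions together with Lemma~\ref{lem:safe}.

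\emph{The counting bound.} Fix $e\in U$. By definition, $L(e)$ is obtained from $\Pal$ by removing the colors that appear on colored edges seeing $e$. Every edge that sees $e$ is either incident to $e$ or a partner of $e$. By hypothesis, at most $I(e)$ of the colored ones are incident and at most $P(e)$ are partners. Each of these edges carries exactly one color, so at most $I(e)+P(e)$ colors are removed. Colors that are repeated only make the removed set smaller, so $|L(e)|\ge|\Pal|-I(e)-P(e)$.

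\emph{The completion.} Let $\varphi$ be a proper coloring of the conflict graph induced on $U$ with $\varphi(e)\in L(e)$ for every $e\in U$. Combine $\varphi$ with the colors already present on $E(G)\setminus U$. To see that the result is a $D$-coloring of $G$, it suffices to show that every pair of conflicting edges $f,g$ of $G$ receives different colors. The colored edges are exactly the edges of $G-S$ minus those we uncolored, so $E(G)\setminus U\subseteq E(G-S)$. This gives three cases.
\begin{enumerate}
\item Both $f$ and $g$ are colored. Then both lie in $G-S$. By Lemma~\ref{lem:safe}, the partial coloring is valid in $G$, including after the further uncoloring. So $f$ and $g$ have different colors.
\item Exactly one of them, say $f$, lies in $U$. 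Then $g$ is colored and sees $f$, so the color of $g$ was excluded from $L(f)$. Since $\varphi(f)\in L(f)$, the colors differ.
\item Both lie in $U$. Then $f$ and $g$ are adjacent in the conflict graph on $U$, and $\varphi$ is proper there, so $\varphi(f)\ne\varphi(g)$.
\end{enumerate}

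Every edge of $G$ now has a color from $\Pal$, and no two conflicting edges share a color. Hence this is a proper coloring of the conflict graph of $G$, which is exactly a $D$-coloring.

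The only point that needs care is case~(i). A $D$-coloring of $G-S$ only guarantees distinct colors for edges that conflict within $G-S$, so one has to know that no new conflicts between colored edges arise in $G$. This is precisely what Lemma~\ref{lem:safe} provides, and it relies on $S$ being a vertex set, so that $G-S$ is an induced subgraph. Everything else is immediate from the definitions.
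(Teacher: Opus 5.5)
Your proof is correct and matches the paper's argument: each colored edge that sees $e$ removes at most one color from $L(e)$. For the completion, conflicts among colored edges are handled by Lemma~\ref{lem:safe}, conflicts between colored and uncolored edges by the definition of $L$, and conflicts within $U$ by properness of the list coloring.
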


\begin{proof}
Each colored edge seeing $e$ removes at most one color.  Conflicts between colored and
uncolored edges are respected by the definition of $L$, conflicts inside $U$ by properness, and
conflicts among colored edges by Lemma~\ref{lem:safe}.
\end{proof}

\subsection{List-coloring tools}

\begin{lemma}[Hall \cite{Hall}]\label{lem:hall}
Finite lists $L_1,\dots,L_n$ have distinct representatives if and only if
$|\bigcup_{i\in J}L_i|\ge |J|$ for every $J\subseteq\{1,\dots,n\}$.  In particular, a clique whose
list sizes, in increasing order, are $g_1\le\dots\le g_n$ with $g_i\ge i$ for all $i$ is
list-colorable: color it in this order.
\end{lemma}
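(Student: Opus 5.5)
The statement has two parts: Hall's theorem itself, and the greedy consequence for a clique. I will cite Hall's theorem \cite{Hall} for the first part rather than reprove it. If a self-contained argument were wanted, I would use the standard induction on $n$. In the first case, some proper nonempty $J$ has $|\bigcup_{i\in J}L_i|=|J|$. Then apply induction to $J$, and to the complement of $J$ with the colors of $\bigcup_{i\in J}L_i$ removed; the union condition survives this removal precisely because $J$ is tight. In the other case, every proper nonempty $J$ has strict inequality. Then pick any color $c\in L_1$ for index $1$, delete $c$ from the remaining lists, and check that the union condition still holds for them. Necessity of the condition is immediate, since distinct representatives for the indices in $J$ are $|J|$ distinct elements of $\bigcup_{i\in J}L_i$.

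The ``in particular'' clause is the part actually used later, and it needs no matching theory. Order the vertices of the clique so that their list sizes are $g_1\le\dots\le g_n$, and color them greedily in this order. When vertex $i$ is reached, exactly $i-1$ vertices have been colored. These are pairwise adjacent to vertex $i$ because we are in a clique, and each forbids at most one color. Hence at least $g_i-(i-1)\ge 1$ colors of its list remain available, and we assign one of them.

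The result is a proper coloring from the lists, because in a clique properness means exactly that all chosen colors are distinct, and the greedy step guarantees this. Equivalently, the chosen colors are a system of distinct representatives, which is consistent with the first part: for any $J$, the largest list in $J$, say the one with index $i_{\max}$, has size $g_{i_{\max}}\ge i_{\max}\ge|J|$, so Hall's condition holds.

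There is no real obstacle. The only care needed is that the bound at step $i$ uses $i-1$ previously colored vertices, and not all other $n-1$ vertices. This is why sorting the lists by size in increasing order matters.
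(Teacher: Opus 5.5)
Your proposal is correct and takes the paper's approach: the paper also just cites Hall for the equivalence and justifies the clique clause by coloring greedily in increasing order of list size, where vertex $i$ sees at most $i-1$ colored neighbors. Your induction sketch for Hall's theorem and your check that the sorted sizes satisfy Hall's condition are both sound, but neither is needed for the paper's use of the lemma.
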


\begin{lemma}\label{lem:evencycle}
Every even cycle is colorable from lists of size at least two.
\end{lemma}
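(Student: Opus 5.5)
We prove the statement that every even cycle $C=e_1e_2\cdots e_{2k}$, with $k\ge 1$ and each list $L(e_i)$ of size at least two, is colorable from its lists. Here ``cycle'' means the conflict graph on the uncolored items is a cycle. For $k=1$ the ``cycle'' is a single edge joining two vertices, and any two lists of size two admit distinct choices, so we may assume $2k\ge 4$. The plan is a direct greedy argument with a case split on whether all lists are equal, rather than an appeal to the general degree-choosability theorem of Erd\H{o}s--Rubin--Taylor. We may shrink every list to exactly two colors, since a coloring from sublists is a coloring from the original lists.

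\emph{Case 1: the lists are not all equal.} Then some two cyclically consecutive vertices $v_i,v_{i+1}$ have $L(v_i)\ne L(v_{i+1})$. Pick a color $c\in L(v_i)\setminus L(v_{i+1})$ and give $v_i$ the color $c$. Then color $v_{i-1},v_{i-2},\dots,v_{i+2},v_{i+1}$ in this order, going around the cycle away from $v_{i+1}$. Each vertex other than the last has, at the moment it is colored, exactly one colored neighbor (its predecessor in the order), so it retains at least one available color. The last vertex $v_{i+1}$ has two colored neighbors, $v_{i+2}$ and $v_i$. However, $v_i$ carries the color $c\notin L(v_{i+1})$, so only $v_{i+2}$ actually removes a color from $L(v_{i+1})$, and one color remains. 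This case uses neither evenness nor the length of the cycle.

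\emph{Case 2: all lists equal some $\{a,b\}$.} Because the cycle is even, it is bipartite. Color the vertices alternately $a,b,a,b,\dots$ around the cycle; evenness guarantees that the alternation closes up consistently.

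The only point needing care is the final vertex in Case 1, handled by the choice of $c$, together with the observation that evenness is used only in Case 2. This is exactly where odd cycles fail, since the constant list $\{a,b\}$ on an odd cycle gives no coloring. We expect no real obstacle here.
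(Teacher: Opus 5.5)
Your proof is correct and is essentially the paper's argument. Both shrink the lists to two colors and alternate when all lists coincide. Otherwise both give one end of an adjacent pair with different lists a color missing from the other end's list, then color greedily around the cycle so that only one constraint remains at the last vertex. Your $k=1$ remark is unnecessary, since an even cycle in a simple graph has length at least four.
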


\begin{proof}
Shrink every list to two colors.  If all lists are equal, alternate the two colors.  Otherwise
some adjacent pair $u,w$ has different lists; delete the edge $uw$ to obtain a path from $u$ to
$w$, give $u$ a color not in $L(w)$, and color the path greedily from $u$ to $w$.  At $w$ only
the color of its path predecessor needs to be avoided.
\end{proof}

The \emph{graph polynomial} of a graph $H$ on the ordered vertex set $\{0,\dots,m-1\}$ is
\[
  P_H=\prod_{ij\in E(H),\,i<j}(X_i-X_j).
\]

\begin{lemma}[\cite{Alon,AlonTarsi}]\label{lem:cn}
Let $t_0,\dots,t_{m-1}\ge0$ be integers with $\sum_i t_i=|E(H)|$.  If the coefficient of
$X_0^{t_0}\cdots X_{m-1}^{t_{m-1}}$ in $P_H$ is nonzero, then $H$ is colorable from every list
assignment with $|L_i|\ge t_i+1$ for all $i$.
\end{lemma}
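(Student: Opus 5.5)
This is the Combinatorial Nullstellensatz in the Alon–Tarsi form, and I will prove it by the standard polynomial-vanishing argument.

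Suppose $|L_i|\ge t_i+1$ for every $i$. Shrink each list so that $|L_i|=t_i+1$; a coloring from the smaller lists is also a coloring from the original ones. Regard the lists as subsets of a field of characteristic zero, say $\mathbb{Q}$ or $\mathbb{C}$, by injecting the colors into it. A proper coloring from the lists is exactly a point $(c_0,\dots,c_{m-1})\in L_0\times\dots\times L_{m-1}$ with $P_H(c)\ne0$. So it suffices to show that $P_H$ does not vanish on the whole grid $L_0\times\dots\times L_{m-1}$.

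Suppose, for contradiction, that $P_H$ vanishes on the grid. For each $i$ put $g_i(X_i)=\prod_{c\in L_i}(X_i-c)$, a monic polynomial of degree $t_i+1$. On $L_i$ we have $X_i^{t_i+1}=X_i^{t_i+1}-g_i(X_i)$, and the right-hand side has degree at most $t_i$. Repeatedly substitute this identity for every occurrence of $X_i^{a}$ with $a>t_i$. The result is a polynomial $\bar P$ that agrees with $P_H$ on the grid and has degree at most $t_i$ in each variable $X_i$.

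The key point is that this reduction never touches the monomial $X_0^{t_0}\cdots X_{m-1}^{t_{m-1}}$. The polynomial $P_H$ is homogeneous of degree $|E(H)|=\sum_i t_i$. Each reduction step replaces a monomial by terms of strictly smaller total degree. A monomial with total degree $\sum_i t_i$ that is not $\prod_i X_i^{t_i}$ must have some exponent exceeding the corresponding $t_i$, so it gets reduced and contributes only lower-degree terms. Therefore the coefficient of $\prod_i X_i^{t_i}$ in $\bar P$ equals its coefficient in $P_H$, which is nonzero by hypothesis.

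It remains to show that a polynomial with degree at most $t_i$ in $X_i$ that vanishes on a grid with $|L_i|=t_i+1$ is identically zero. This is proved by induction on $m$. Write $\bar P$ as a polynomial in $X_{m-1}$ whose coefficients are polynomials in $X_0,\dots,X_{m-2}$. Fix any point of $L_0\times\dots\times L_{m-2}$. The resulting univariate polynomial has degree at most $t_{m-1}$ and $t_{m-1}+1$ roots, so it is zero. Hence each coefficient vanishes on the smaller grid, and by induction each coefficient is the zero polynomial. This contradicts the nonzero coefficient of $\prod_i X_i^{t_i}$ in $\bar P$.

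The only delicate step is the bookkeeping showing that the target coefficient survives the reduction, and it rests entirely on the homogeneity of $P_H$ together with the degree condition $\sum_i t_i=|E(H)|$.
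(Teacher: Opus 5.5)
Your argument is correct. It is the standard proof of Alon's Combinatorial Nullstellensatz. You reduce modulo $\prod_{c\in L_i}(X_i-c)$, use the homogeneity of $P_H$ together with $\sum_i t_i=|E(H)|$ to see that the coefficient of $X_0^{t_0}\cdots X_{m-1}^{t_{m-1}}$ survives, and then apply the grid-vanishing lemma. The paper itself gives no proof and only cites \cite{Alon,AlonTarsi}, where the argument is essentially the one you give. Working over a field of characteristic zero is the right choice, because the paper's certificates exhibit the coefficient as a nonzero integer.
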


\begin{lemma}\label{lem:greedy}
Let $f\colon V(H)\to\mathbb Z$.  If $V(H)$ has an order $v_1,\dots,v_m$ (a \emph{removal order})
such that each $v_i$ has fewer than $f(v_i)$ neighbors among $v_{i+1},\dots,v_m$, then $H$ is
colorable from all lists with $|L(v)|\ge f(v)$: color $v_m,\dots,v_1$ in this order.  If $H$ is
colorable from all lists of sizes at least $f$, then every subgraph $H'$ of $H$ is colorable
from all lists whose sizes are at least the corresponding values of $f$.
\end{lemma}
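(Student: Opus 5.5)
The first assertion of Lemma~\ref{lem:greedy} follows by induction on the number of vertices, colouring greedily in the reverse of the removal order. Fix lists with $|L(v)|\ge f(v)$ and colour $v_m,v_{m-1},\dots,v_1$ in this order. When we reach $v_i$, the vertices already coloured are exactly $v_{i+1},\dots,v_m$. By hypothesis fewer than $f(v_i)$ of them are adjacent to $v_i$, so their colours exclude at most $f(v_i)-1$ colours from $L(v_i)$. At least one colour therefore remains, and we give it to $v_i$. No edge of $H$ joins two vertices of the same colour: for an edge $v_iv_j$ with $i<j$, the vertex $v_j$ was coloured first, and $v_i$ avoided its colour. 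This argument does not need $f\ge 1$. If some $f(v_i)\le0$, the hypothesis "fewer than $f(v_i)$ neighbours" is impossible to satisfy, so the statement is vacuous in that case.

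For the second assertion, let $H'\subseteq H$ and take lists $L'$ on $V(H')$ with $|L'(v)|\ge f(v)$. We extend $L'$ to a list assignment $L$ on all of $V(H)$ with $|L(v)|\ge f(v)$. On $V(H')$ we keep $L'$. On each vertex of $V(H)\setminus V(H')$ we put an arbitrary set of $\max(f(v),0)$ colours, taking fresh symbols if the palette is too small. By hypothesis $H$ has a proper colouring $c$ from $L$.

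Restrict $c$ to $V(H')$. Each vertex receives a colour from its list $L'(v)$. Every edge of $H'$ is also an edge of $H$, so its endpoints receive distinct colours. Hence $c|_{V(H')}$ is a proper colouring of $H'$ from $L'$.

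Neither step is a real obstacle. The only points needing care are that the count in the greedy step uses later neighbours only, and that the extension of the lists must respect the size bound $f$ on the vertices outside $H'$.
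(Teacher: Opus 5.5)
Your proposal is correct and follows essentially the same route as the paper. For the first part you color greedily in the order $v_m,\dots,v_1$; for the second you assign lists of the required size to $V(H)\setminus V(H')$, color $H$, and restrict to $H'$. The only difference is cosmetic: the paper also shrinks the lists on $V(H')$ to the required sizes, which, as your version implicitly shows, is unnecessary.
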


\begin{proof}
When $v_i$ is colored, its colored neighbors are among $v_{i+1},\dots,v_m$, so a color of its
list remains.  For $H'$, give each vertex of $H$ missing from $H'$ an arbitrary list of the
required size, shrink the other lists to the required sizes, color $H$, and restrict; the
additional edges of $H$ only impose additional constraints.
\end{proof}

\subsection{Planar facts}

We use Euler's formula $n-e+f=2$ for connected plane graphs, the bound $e\le 3n-6$ for simple
planar graphs with $n\ge3$, the bound $e\le 2n-4$ for simple bipartite planar graphs with
$n\ge3$ (so $K_{3,3}$ is not planar), the Jordan curve theorem (an edge cannot join two vertices
lying strictly on opposite sides of a cycle), and the fact that minors of planar graphs are
planar.  In a $2$-connected plane graph every face is bounded by a cycle
\cite[Proposition~4.2.6]{Diestel}.

\begin{lemma}\label{lem:join}
If the join $K_2+J$ is planar, then $J$ is a linear forest, that is, a disjoint union of paths.
\end{lemma}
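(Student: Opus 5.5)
We show that $J$ has maximum degree at most two and contains no cycle; a graph with both properties is a disjoint union of paths. Write $K_2=xy$. In $K_2+J$ both $x$ and $y$ are adjacent to each other and to every vertex of $J$. Throughout we use only that planarity is inherited by subgraphs, together with the facts that $K_5$ and $K_{3,3}$ are not planar.

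\emph{Degree bound.} Suppose some vertex $v$ of $J$ has three neighbors $a,b,c$ in $J$. Consider the bipartite subgraph with parts $\{x,y,v\}$ and $\{a,b,c\}$. The vertices $x$ and $y$ are adjacent to all of $a,b,c$ by the definition of the join, and $v$ is adjacent to them by assumption. This is a $K_{3,3}$ subgraph of a planar graph, which is impossible. Hence every vertex of $J$ has degree at most two.

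\emph{No cycles.} Suppose $J$ contains a cycle $C$ of length $k\ge 3$. If $k=3$, the triangle $C$ together with $x$ and $y$ forms a $K_5$, a contradiction. For $k\ge4$ we contract edges of $C$ until a triangle remains; the minor obtained from $K_2+C$ in this way is $K_2+K_3=K_5$. Minors of planar graphs are planar, so this is again a contradiction.

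Alternatively, the cycle case avoids minors entirely. The subgraph $K_2+C$ has $k+2$ vertices and $1+2k+k=3k+1$ edges. Since $3(k+2)-6=3k<3k+1$, this violates $e\le 3n-6$.

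Thus $J$ is acyclic with maximum degree at most two, so each of its components is a path, and $J$ is a linear forest. Both steps are routine; the only point to watch is that $J$ may contain isolated vertices, and these count as trivial paths.
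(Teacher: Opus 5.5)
Your proof is correct and matches the paper's: the same $K_{3,3}$ with parts $\{x,y,v\}$ and $\{a,b,c\}$ bounds the degree, and your edge-count argument for cycles ($3k+1>3(k+2)-6$ edges on $k+2$ vertices) is exactly the one the paper uses. The contraction to $K_5$ is a valid alternative for the cycle case.
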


\begin{proof}
Let $x,y$ be the vertices of $K_2$.  A vertex $w$ of $J$ with three neighbors $z_1,z_2,z_3$ in
$J$ gives a $K_{3,3}$ with parts $\{x,y,w\}$ and $\{z_1,z_2,z_3\}$.  A cycle of length $r\ge3$ in
$J$ together with $x,y$ spans $r+2$ vertices and at least $3r+1>3(r+2)-6$ edges.  So $J$ has
maximum degree at most two and no cycle.
\end{proof}

\section{The minimal counterexample}\label{sec:mincounter}

Fix an integer $D\ge 6$, put $k=2D-1$, and let $\Pal=\{1,\dots,k\}$.  Suppose that some planar
graph of maximum degree at most $D$ has no $D$-coloring with palette $\Pal$, and choose such a
graph $G$ with the fewest vertices and, subject to this, the fewest edges.  We fix a plane
embedding of $G$ whenever an argument uses regions.  For every nonempty $S\subseteq V(G)$ the
graph $G-S$ has a $D$-coloring with palette $\Pal$, which by Lemma~\ref{lem:safe} is a valid
partial coloring of $G$; extension lists are always those of Lemma~\ref{lem:lists}.

\subsection{Basic structure}

\begin{lemma}\label{lem:mindeg3}
$G$ has minimum degree at least three.
\end{lemma}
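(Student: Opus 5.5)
We argue by contradiction: suppose $G$ has a vertex $v$ with $d(v)\le 2$, delete $v$, and extend the coloring. If $d(v)=0$, then $G-v$ has a $D$-coloring by minimality, and since $v$ lies on no edge this is already a $D$-coloring of $G$, a contradiction. So assume $1\le d(v)\le 2$. By minimality and Lemma~\ref{lem:safe}, $G-v$ has a $D$-coloring with palette $\Pal$, and it is a valid partial coloring of $G$. The uncolored set $U$ consists of the at most two edges at $v$. By Lemma~\ref{lem:lists}, it suffices to show that each $e\in U$ has $|L(e)|\ge 2$; any two edges of $U$ are incident and hence conflict, so two such lists admit distinct choices.

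Fix $e=vu\in U$ and count the colored edges that see $e$, using the book description in Remark~\ref{rem:books}. Every edge seeing $e$ lies in a book containing $e$, and such a book sits at $e$ itself or at an edge of a triangle through $e$.

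\emph{Incident edges.} There are at most $d(u)-1\le D-1$ colored edges at $u$. The only other edge at $v$ is uncolored.

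\emph{Partners.} By Lemma~\ref{lem:partner}, a partner $tq$ of $vu$ has $t\in N(u)\cap N(v)$. Since $d(v)\le2$, the only possible $t$ is the other neighbor $w$ of $v$, and then $w$ must be adjacent to $u$. The candidates for $q$ lie in $(N(u)\cup N(v))\setminus\{u,v,w\}=N(u)\setminus\{v,w\}$, so the partners are the edges $wq$ with $q\in N(u)\cap N(w)\setminus\{v\}$. Each such $q$ is a common neighbor of $u$ and $w$. The edge $uq$ is incident to $e$ and already counted, but $wq$ is new.

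Let $c=|N(u)\cap N(w)|$, which includes $v$. The colored edges seeing $e$ then number at most $(d(u)-1)+(c-1)$. Because $u$ is adjacent to $v$, $w$ and the $c-1$ other common neighbors, we have $c-1\le d(u)-2$. The total is therefore at most $2d(u)-3\le 2D-3$. Hence $|L(e)|\ge (2D-1)-(2D-3)=2$.

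The counting needs one refinement. The edge $uw$ itself is incident to $e$ and is already among the $d(u)-1$ edges at $u$. The edges $wq$ are distinct from the edges at $u$ and from one another, so nothing is double counted. If $w\not\sim u$ or $d(v)=1$, there are no partners at all, and $|L(e)|\ge 2D-1-(D-1)=D\ge 2$.

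In every case both edges at $v$ have lists of size at least two, so we can color them with distinct colors. This extends the coloring of $G-v$ to a $D$-coloring of $G$, contradicting the choice of $G$.

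The main obstacle is the partner bound. It requires that the only triangle through $e$ uses $w$, and that the common neighbors of $u$ and $w$ other than $v$ are bounded by $d(u)-2$; both facts were used above.
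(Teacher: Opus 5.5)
Your proof is correct and is essentially the paper's argument: delete $v$, observe via Lemma~\ref{lem:partner} that the only partners of $vu$ are the edges $wq$ with $q\in N(u)\cap N(w)\setminus\{v\}$, and conclude that both spokes have lists of size at least $2$. The only difference is the bound on these partners: you bound their number by $d(u)-2$ using the neighbors of $u$, whereas the paper bounds it by $D-2$ using the neighbors of $w$; either works.

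One small aside: your opening claim that every edge seeing $e$ lies in a book containing $e$ is false for incident edges that are not in a triangle with $e$. You never use it, though, since your count comes directly from incidences and Lemma~\ref{lem:partner}.
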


\begin{proof}
An isolated vertex can be deleted and restored.  If $N(v)=\{a\}$, delete $v$: the edge $va$ has
no partner and sees at most $D-1$ colored edges at $a$, so at least $D$ colors remain.  Let
$N(v)=\{a,b\}$ and delete $v$.  If $ab\notin E(G)$, the spokes $va,vb$ have no partners and
lists of size at least $D$.  If $ab\in E(G)$, the partners of $va$ are, by
Lemma~\ref{lem:partner}, the edges $bq$ with $q\in N(a)\cap N(b)\setminus\{v\}$; there are at most
$D-2$ of them, because $a$ and $v$ are two neighbors of $b$.  As $va$ also sees at most $D-1$
colored edges at $a$, its list has size at least $k-(D-1)-(D-2)=2$, and symmetrically for $vb$.
Two incident edges with lists of size at least two receive distinct colors.  This contradicts
minimality.
\end{proof}

\begin{lemma}\label{lem:3conn}
$G$ is $3$-connected.
\end{lemma}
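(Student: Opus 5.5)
The plan is to show that a separation of order at most two lets us colour the two sides independently and then glue the colourings by permuting colours on one side, contradicting the minimality of $G$. First I would dispose of small cases. If $|V(G)|\le 3$, then $G$ has at most three edges and all of its edges can receive distinct colours, since $k\ge 11$; so $G$ has at least four vertices. $G$ is connected, since otherwise we colour the components separately. The key structural fact I would use throughout is that a diamond is $2$-connected, and any two of its nonadjacent vertices are its two tips. Hence if $G=G_1\cup G_2$ with $V(G_1)\cap V(G_2)=X$, $|X|\le 2$, and there are no edges between $V(G_1)\setminus X$ and $V(G_2)\setminus X$, then every diamond meeting both $V(G_1)\setminus X$ and $V(G_2)\setminus X$ has these two vertices as tips. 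Its spine is then a common neighbour pair, which is forced to be $X=\{u,v\}$ with $uv\in E(G)$.

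\emph{Cut vertex.} Let $X=\{v\}$ and colour $G_1,G_2$ by minimality. By the fact above, no diamond crosses, so the only cross conflicts are between edges at $v$. The edges at $v$ number at most $D\le k$. Permute the colours of $G_2$ so that its edges at $v$ avoid the colours used by $G_1$ at $v$; this is possible because at most $D$ colours are involved in total.

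\emph{Separation pair.} Let $X=\{u,v\}$ be a minimum separator, with both sides containing vertices outside $X$. Since $G$ is $2$-connected, each of $u$ and $v$ has a neighbour in each of $V(G_1)\setminus X$ and $V(G_2)\setminus X$. Put $H_i=G_i+uv$. Then $H_i$ has fewer vertices than $G$. Also $d_{H_i}(u)\le d_G(u)\le D$: if $uv\notin E(G)$, the added edge is compensated by $u$'s neighbour on the other side; if $uv\in E(G)$, nothing is added. Colour $H_1$ and $H_2$ by minimality, and permute the colours of $H_2$ so that $uv$ gets the same colour $c$ in both. Now consider all edges of $G_1\cup G_2\cup\{uv\}$ at $u$ or $v$. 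There are at most $d(u)+d(v)-1\le 2D-1=k$ of them, where the degrees are taken in $G+uv$ and are at most $D$ by the remark above. So there is room to permute the colours of $H_2$, fixing $c$, so that the colour set $A_2$ used on the edges of $H_2$ at $u$ or $v$ other than $uv$ is disjoint from the corresponding set $A_1$ of $H_1$.

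The counting is the step I would check most carefully. The permutation is allowed to merge nothing and must be injective, so it suffices that
\[
|A_2|\le k-1-|A_1|.
\]
This holds because $|A_1|+|A_2|+1$ is at most the number of such edges, which is at most $k$.

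After this permutation, every pair of edges at $u$ or $v$ in $H_1\cup H_2$ has distinct colours. In particular, every book at $uv$ in $G$ is rainbow, and all incidences at $u$ and $v$ are respected. Conflicts within one side are respected because $G_i\subseteq H_i$. By the diamond fact, every cross conflict lies in a diamond with spine $uv$, and is therefore covered.

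Finally, if $uv\notin E(G)$, we drop $uv$ and apply Lemma~\ref{lem:safe} to the subgraph $G$ of $G+uv$. To make that legitimate, I would phrase the glued object as a $D$-colouring of $H_1\cup H_2$. That is valid for the same reasons: any diamond of $H_1\cup H_2$ crossing the separation has spine $uv$, and all its edges are at $u$ or $v$. This contradicts the choice of $G$, so $G$ is $3$-connected. I expect the main obstacle to be the degree bookkeeping for $H_i$ when $uv$ is added, which relies on each separator vertex having neighbours strictly on both sides.
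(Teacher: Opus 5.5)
Your cut-vertex case and your case $uv\in E(G)$ are sound and agree with the paper: one common colour on $uv$, disjoint palettes for the other boundary colour classes, and $1+|A_1|+|A_2|\le d_G(u)+d_G(v)-1\le k$.

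The gap is the case $uv\notin E(G)$. There your claim that the degrees in $G+uv$ are at most $D$ is false. You only showed $d_{H_i}(u)\le d_G(u)$, whereas $d_{G+uv}(u)=d_G(u)+1$, which is $D+1$ when $d_G(u)=D$. So the edges of $G+uv$ at $u$ or $v$ number $d_G(u)+d_G(v)+1$, up to $2D+1$. Moreover $|A_1|+|A_2|$ can equal $d_G(u)+d_G(v)$, because nothing stops the colourings of $H_1,H_2$ supplied by minimality from using distinct colours on all their edges at $u$ or $v$ other than $uv$. If $d_G(u)=d_G(v)=D$, then $|A_1|+|A_2|=2D>k-1$, so no permutation fixing $c$ can make $A_2$ disjoint from $A_1\cup\{c\}$. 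Separately, you never justify that $G_i+uv$ is planar, which minimality requires. It is planar, since the other side contains a $u$--$v$ path and so $G_i+uv$ is a minor of $G$.

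The missing idea is that when $uv\notin E(G)$ you are demanding more than is needed. No diamond of $G$ crosses the separation, as you yourself observed. So the only constraints between the sides are properness at $u$ and properness at $v$, taken separately: an edge at $u$ on one side may share its colour with an edge at $v$ on the other. Aiming for a $D$-colouring of $G+uv$, a graph that may have maximum degree $D+1$, with $c$ reserved and $A_1\cap A_2=\emptyset$, wastes exactly the colours you lack.

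The paper handles this case without adding the edge. It colours each piece $G[C_i\cup\{a,b\}]$ and counts the colour classes $s_i$ meeting boundary edges. When $\sum s_i\le k$, it gives these classes disjoint palettes. Otherwise there are exactly $2D$ boundary edges, each alone in its class. It then gives the class of one edge at $a$ in one piece and the class of one edge at $b$ in another piece the same colour, which uses $2D-1$ colours. You can repair your proof the same way: drop $uv$ in this case and glue directly into $G$.
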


\begin{proof}
Edges in different components never conflict, so $G$ is connected.

Suppose $z$ is a cut vertex.  Color each subgraph induced by $z$ and one component of $G-z$.  A
diamond is $2$-connected, so no diamond meets two components.  In each piece the edges at $z$
have distinct colors; permute the palette of each piece so that its colors at $z$ form a
prescribed set, these sets being pairwise disjoint, which is possible because their total size
is $d(z)\le D\le k$.  Properness at $z$ is then the only constraint between pieces, and it
holds.

Suppose $\{a,b\}$ is a separating pair and let $C_1,\dots,C_m$ ($m\ge2$) be the components of
$G-\{a,b\}$.  Each $C_i$ has neighbors at both $a$ and $b$, as otherwise $G$ would have a cut
vertex.  Color each $G_i=G[C_i\cup\{a,b\}]$ by minimality.  Call the edges of $G_i$ at $a$ or
$b$ its \emph{boundary edges}, and let $s_i$ be the number of color classes of $G_i$ meeting
its boundary edges.

First let $ab\in E(G)$.  Every boundary edge is incident with $ab$, so the color of $ab$ occurs on
no other boundary edge.  Prescribe one common color for $ab$ and pairwise disjoint sets of
$s_i-1$ further colors for the other boundary classes of the pieces.  The number of colors needed
is at most
\[
  1+\sum_{i=1}^m(s_i-1)\le 1+\sum_{i=1}^m\bigl(d_{G_i}(a)+d_{G_i}(b)-2\bigr)=d_G(a)+d_G(b)-1\le k ,
\]
and each prescribed injection extends to a permutation of $\Pal$.  A diamond meeting two
components contains $a$, $b$ and one vertex of each of two components, since it has no cut
vertex and only four vertices.  These two vertices are nonadjacent, so they are its tips and
$ab$ is its spine; all five edges are boundary edges, and the prescribed palettes make their
colors distinct.

Now let $ab\notin E(G)$.  A set of four vertices consisting of $a$, $b$ and one vertex in each of two
components misses both $ab$ and the edge between the components, so no diamond meets two
components and only properness at $a$ and $b$ must be enforced across pieces.  There are at
most $d(a)+d(b)\le 2D$ boundary edges, and $\sum_i s_i$ is no larger.  If $\sum_i s_i\le k$,
allocate disjoint boundary palettes.  Otherwise $\sum_i s_i=2D$, there are exactly $2D$
boundary edges, and every boundary color class of every piece contains exactly one boundary
edge.  Choose an edge at $a$ in $G_1$ and an edge at $b$ in $G_2$, give their two classes the same
color, and make all other boundary classes distinct; this uses $2D-1$ colors.  The two chosen
edges are nonincident and lie in no common diamond, and the prescribed map is injective on the
boundary colors of each piece, so it extends to a palette permutation of that piece.

Each case contradicts minimality.  Since $G$ has a vertex of degree at least three
(Lemma~\ref{lem:mindeg3}), $|V(G)|\ge4$, and $G$ is $3$-connected.
\end{proof}

\begin{lemma}\label{lem:nobook}
$G$ contains no book $\Bk_{D-1}$.  Consequently every edge of $G$ has at most $D-2$ common
neighbors.
\end{lemma}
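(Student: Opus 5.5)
The plan is to suppose $G$ contains a book $\Bk_{D-1}$ at an edge $xy$, with common neighbors $z_1,\dots,z_{D-1}$, and first pin down the whole graph. Since $d(x),d(y)\le D$, we get $N(x)=\{y,z_1,\dots,z_{D-1}\}$ and $N(y)=\{x,z_1,\dots,z_{D-1}\}$. Fix the embedding. The $z_i$ occur in a cyclic order around the edge $xy$, and the plane splits into regions, each bounded by a $4$-cycle $x z_i y z_{i+1}$ with $z_i,z_{i+1}$ cyclically consecutive.

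Next I would use $3$-connectivity. Suppose some vertex $w\notin\{x,y,z_1,\dots,z_{D-1}\}$ exists. Then $w$ lies strictly inside one such region, and it cannot reach $x$ or $y$ except through $z_i$ or $z_{i+1}$. So $\{z_i,z_{i+1}\}$ separates $w$ from $x$ by the Jordan curve theorem, contradicting Lemma~\ref{lem:3conn}. Hence $V(G)=\{x,y,z_1,\dots,z_{D-1}\}$, and $G=K_2+J$ with $J=G[\{z_1,\dots,z_{D-1}\}]$. By Lemma~\ref{lem:join}, $J$ is a linear forest, and planarity lets us take its paths to follow the cyclic order, so each $J$-edge has the form $z_iz_{i+1}$.

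It then remains to write down an explicit $D$-coloring of this small graph, contradicting the choice of $G$. The book at $xy$ needs all $2D-1$ colors, so every edge of $J$ must reuse a book color. By Lemma~\ref{lem:partner}, an edge $z_iz_{i+1}$ sees the following edges:
\begin{itemize}
\item $xy$;
\item $xz_j$ and $yz_j$ for $j\in\{i-1,i,i+1,i+2\}$, and only for those $j$ for which the corresponding $J$-edges exist;
\item at most the two $J$-edges $z_{i-1}z_i$ and $z_{i+1}z_{i+2}$, since two $J$-edges at distance two have only one cross edge.
\end{itemize}
So I would color the book with freedom in how colors are assigned to the pairs $\{xz_j,yz_j\}$. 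For each $J$-edge I would pick the color of $xz_j$ or $yz_j$ for some $z_j$ far along the cyclic order, say roughly opposite, and alternate between the $x$-side and the $y$-side so that consecutive $J$-edges differ. I would organise this as a list-coloring of the conflict graph on $E(J)$, which is a union of paths, with lists of size at least two, and finish by Lemma~\ref{lem:evencycle} and greedy coloring along paths.

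The main obstacle is the tight case $D=6$. There $J$ has at most five vertices, and a $J$-edge may see up to $9$ book colors plus two $J$-edges, out of only $11$ colors. I would handle $D=6$ (and possibly $D=7$) by a direct explicit assignment, or a short case check on the at most $4$ edges of $J$. This uses the fact that when the window $\{i-1,\dots,i+2\}$ covers most of the $z$'s, $J$ is a short path and the two $J$-neighbors are often absent. For $D\ge8$ the counting above leaves lists of size at least two outright.

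For the final sentence of the lemma: if an edge $uv$ had $D-1$ common neighbors, then $uv$ together with the edges from $u$ and $v$ to those neighbors is a copy of $\Bk_{D-1}$, which we have just excluded.
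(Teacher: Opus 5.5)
Your proposal is correct, and the coloring step takes a different route from the paper's. However, the case you flag as the main obstacle is not one, and as written you leave $D=6$ (and possibly $D=7$) to a case check that is never needed. Your own count already settles every $D\ge6$.

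Color the book at $xy$ rainbow, in any way. A $J$-edge $uv$ then sees at most $9$ book edges:
\begin{itemize}
\item the four incident edges $xu,yu,xv,yv$;
\item the edge $xy$;
\item $xq$ and $yq$ for the at most two $J$-neighbors $q$ of $u$ or $v$ outside $\{u,v\}$.
\end{itemize}
Since $J$ is triangle-free, $x$ and $y$ are the only common neighbors of $u,v$. So $|L(uv)|\ge 2D-10\ge2$ for every $D\ge6$.

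Two disjoint $J$-edges never conflict. Three cross edges together with the two edges would put a cycle in the linear forest $J$ (Lemma~\ref{lem:partner}). So the conflict graph on $E(J)$ is the line graph of $J$, a disjoint union of paths. Color each path greedily from one end: each edge then has at most one already colored conflicting $J$-edge, and lists of size two suffice. The ``roughly opposite'' choice of colors, Lemma~\ref{lem:evencycle} (there are no cycles) and any special treatment of small $D$ are all unnecessary. The bound $2D-10\ge2$ becomes tight exactly at $D=6$, the threshold of the theorem.

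Compared with the paper, the two routes differ at both main steps.
\begin{itemize}
\item \textbf{Ruling out other vertices.} The paper argues combinatorially: a component of $G-(\{x,y\}\cup Z)$ with at most two attachments contradicts $3$-connectivity, and one with three attachments yields a $K_{3,3}$ minor. You use the embedding and the Jordan curve theorem instead. One small correction: two of the regions cut out by the book are triangles $xyz_j$ rather than $4$-cycles, but there a single $z_j$ already separates, so your argument survives.
\item \textbf{The coloring.} The paper uses connectivity of $G-\{x,y\}$ to pin $G$ down to exactly $K_2+P_{D-1}$, then writes an explicit coloring with rim colors $i+3$ and $i-2$ and checks it edge by edge. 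Your list-coloring step works for any linear forest $J$. It therefore needs neither the connectivity of $J$ nor your claim that $J$-edges follow the cyclic order.
\end{itemize}
Your route is shorter and less fragile. The paper's explicit $\varphi$ is fully concrete and easy to verify by hand, but only after the structure of $G$ has been determined exactly.
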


\begin{proof}
Suppose the edge $xy$ has a set $Z$ of $D-1$ common neighbors.  Then $N(x)=Z\cup\{y\}$ and
$N(y)=Z\cup\{x\}$.  A component of $G-(\{x,y\}\cup Z)$ can attach only to $Z$.  With at most two
attachments it contradicts Lemma~\ref{lem:3conn}; with at least three attachments, among them
$z_1,z_2,z_3$, contracting it to a vertex gives a $K_{3,3}$ minor with parts
$\{x,y,\text{the component}\}$ and $\{z_1,z_2,z_3\}$.  Hence $V(G)=\{x,y\}\cup Z$.  By Lemma~\ref{lem:join}, $G[Z]$ is a linear
forest, and it is connected because $G-\{x,y\}$ is connected.  So $G[Z]$ is a path
$z_1z_2\cdots z_m$ with $m=D-1\ge5$, and $G=K_2+P_m$.  Define
\[
  \varphi(xy)=2m+1,\qquad \varphi(xz_j)=j,\qquad \varphi(yz_j)=m+j\quad(1\le j\le m),
\]
\[
  \varphi(z_iz_{i+1})=
  \begin{cases} i+3, & 1\le i\le m-3,\\ i-2, & i\in\{m-2,m-1\}.\end{cases}
\]
The $2m+1$ edges $xy,xz_j,yz_j$ receive distinct colors.  By Lemma~\ref{lem:partner}, the rim
edge $z_iz_{i+1}$ sees exactly $xy$, the edges $xz_j,yz_j$ for $j\in\{i-1,i,i+1,i+2\}$ (when these
indices exist), and its adjacent rim edges.  Its color lies in $\{1,\dots,m\}\setminus\{i-1,\dots,i+2\}$
(here $m\ge5$ is used for $i=m-2$), so it differs from the colors of all spokes and of $xy$ that
it sees.  Consecutive rim colors differ: they increase by one up to $\varphi(z_{m-3}z_{m-2})=m$,
followed by $m-4$ and $m-3$.  Nonincident rim edges do not conflict, since at most one cross edge
between them lies on the path.  Thus $\varphi$ is a $D$-coloring with $k$ colors, a
contradiction.
\end{proof}

\begin{lemma}\label{lem:triangle}
Every edge of $G$ lies in a triangle.
\end{lemma}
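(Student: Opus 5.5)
We argue by contradiction: suppose $uv\in E(G)$ lies in no triangle, so $N(u)\cap N(v)=\emptyset$. The plan is to delete only the edge $uv$, colour $G-uv$ by minimality (it has fewer edges and is still planar with maximum degree at most $D$), and extend to $uv$. As the remark after Lemma~\ref{lem:safe} warns, deleting an edge is dangerous only when that edge is the spine of a diamond, so the first step is to verify that a $D$-coloring of $G-uv$ is a valid partial $D$-coloring of $G$.

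Every diamond of $G$ containing $uv$ has $uv$ in one of its triangles, because each edge of a diamond lies in a triangle of it. Since $uv$ lies in no triangle, no diamond of $G$ contains $uv$. A diamond of $G$ avoiding $uv$ is a subgraph of $G-uv$. Incidences among edges other than $uv$ are unchanged. Hence every conflicting pair of $G$ not involving $uv$ already conflicts in $G-uv$ and is properly coloured there.

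Next we bound the list of $uv$. By Lemma~\ref{lem:partner}, $uv$ has no partners, since there is no $t\in N(u)\cap N(v)$. So $uv$ sees only its incident edges, at most $d(u)-1+d(v)-1\le 2D-2$ of them. Lemma~\ref{lem:lists}, applied with $U=\{uv\}$ and the uncoloured edge treated as above, leaves at least $k-(2D-2)=1$ available colour. Assigning that colour gives a $D$-coloring of $G$ with palette $\Pal$, contradicting the choice of $G$.

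The only delicate point is the claim that no diamond of $G$ contains $uv$. This requires checking that every edge of a diamond, whether a spine edge or one of the four outer edges, lies in one of its two triangles, which is immediate from $K_4$ minus an edge. Beyond that, the argument is a routine count, and it does not need Lemma~\ref{lem:3conn} or Lemma~\ref{lem:nobook}.
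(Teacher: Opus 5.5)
Your proof is correct and follows the paper's argument essentially verbatim. An edge in no triangle lies in no diamond, so deleting it removes no constraint between other edges; by edge-minimality $G-uv$ is colorable, and $uv$ sees at most $2D-2$ incident edges and no partners, leaving one of the $2D-1$ colors. You also checked directly that the coloring of $G-uv$ is a valid partial coloring of $G$, which is appropriate because Lemma~\ref{lem:lists} is phrased for vertex deletions.
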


\begin{proof}
Every edge of a diamond lies in a triangle of that diamond, so an edge $e$ in no triangle lies in
no diamond, and deleting $e$ removes no constraint between other edges.  By edge-minimality
$G-e$ has a $D$-coloring; $e$ sees at most $2D-2$ incident edges and no partners, so one of the
$k$ colors is available for $e$, a contradiction.
\end{proof}

\subsection{Vertices of degree three}

\begin{lemma}\label{lem:deg3easy}
Let $v$ have degree three, with $N(v)=\{a,b,c\}$.  At most one vertex other than $v$ is adjacent
to all of $a,b,c$.  A coloring of $G-v$ extends to $G$ if $G[\{a,b,c\}]$ is not a triangle, or if
it is a triangle and no vertex other than $v$ is adjacent to all of $a,b,c$.
\end{lemma}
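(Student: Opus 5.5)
The first assertion follows from planarity. Suppose two vertices $w_1,w_2\ne v$ are both adjacent to all of $a,b,c$. Then $\{v,w_1,w_2\}$ and $\{a,b,c\}$ span a $K_{3,3}$, which contradicts the facts in the planar-facts subsection. For the extension claim I would start from a $D$-coloring of $G-v$. By Lemma~\ref{lem:safe} it is a valid partial coloring of $G$. The three spokes $va,vb,vc$ are pairwise incident, so they form a triangle in the conflict graph. By Lemma~\ref{lem:hall} it suffices to show that their lists from Lemma~\ref{lem:lists}, sorted, have sizes at least $1,2,3$. Each spoke $vx$ sees at most $d(x)-1\le D-1$ colored incident edges. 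By Lemma~\ref{lem:partner}, its partners are edges $tq$ with $t\in\{a,b,c\}\setminus\{x\}$ adjacent to $x$, and $q$ a further neighbor of $x$ or of $v$. Since $t$ already has $v$ and $x$ as neighbors, each such $t$ contributes at most $D-2$ partners.

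I would split the case where $G[\{a,b,c\}]$ is not a triangle by its number of edges. With no edges, no spoke has a partner, so every list has size at least $k-(D-1)=D\ge6$. With one edge $ab$, only $t=b$ (for $va$) and $t=a$ (for $vb$) contribute, so $|L(va)|,|L(vb)|\ge k-(D-1)-(D-2)=2$. The spoke $vc$ has no partners, so $|L(vc)|\ge D\ge3$, and Hall's condition holds. The remaining subcase is a path $a\,b\,c$ with $ac\notin E(G)$. Here $va$ and $vc$ again have lists of size at least $2$, but $vb$ has two possible values of $t$, namely $a$ and $c$, and the naive count for it fails.

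For that path subcase, which I expect to be the main obstacle, I would first sharpen the count. The partners of $vb$ through $t=a$ are edges $aq$ with $q\in N(b)\setminus\{v,a,c\}$, since $q=c$ is impossible when $ac\notin E(G)$. Likewise, through $t=c$ they are edges $cq$ with $q\in N(b)\setminus\{v,a,c\}$. So both families are indexed by the at most $D-3$ other neighbors of $b$. The key observation should be that for such a $q$ adjacent to both $a$ and $c$, the edges $aq,cq$ are incident at $q$ and so carry distinct colors, while the edge $bq$ is incident with $vb$. I would combine these facts to show that the colors forbidden to $vb$ come from at most about $2D-3$ distinct sources. If the count still falls short, I would uncolor $ab$ and $bc$ as well. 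I would then solve the resulting five-edge list problem either with a removal order as in Lemma~\ref{lem:greedy} or by exhibiting a nonzero coefficient in the graph polynomial as in Lemma~\ref{lem:cn}. One also has freedom to choose which of $va,vc$ is colored last.

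In the triangle case with no other common neighbor of $a,b,c$, every spoke has two values of $t$. The extra hypothesis limits double counting. For $va$, a partner $bq$ through $t=b$ and a partner $cq$ through $t=c$ with the same $q$ would make $q$ adjacent to $a,b,c$, which is excluded unless $q=v$. I would use this, together with the fact that $bc$ is a partner seen through both $t=b$ and $t=c$, to bound the number of distinct colors seen by each spoke. The resulting lists should have sizes $1,2,3$ up to reordering, possibly after uncoloring the triangle edges $ab,bc,ca$ and applying Lemma~\ref{lem:cn} to the six-edge configuration.
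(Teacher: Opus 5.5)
Your first assertion and the cases where $G[\{a,b,c\}]$ has no edge or exactly one edge are correct and agree with the paper. The path and triangle cases are not proved, and the fallbacks you propose would not close them.

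\emph{Path case $a\,b\,c$.} Put $A=N(a)\cap N(b)\setminus\{v\}$ and $C=N(c)\cap N(b)\setminus\{v\}$. Both are contained in $N(b)\setminus\{v,a,c\}$, and the spoke $vb$ sees $d(b)-1+|A|+|C|$ colored edges. The fact you need is $|A\cap C|\le 1$: a vertex of $A\cap C$ is adjacent to all of $a,b,c$, so your own $K_{3,3}$ argument allows at most one. This gives $|A|+|C|\le D-2$ and hence $|L(vb)|\ge 2$. Your observation that $aq$ and $cq$ are incident at $q$ points the wrong way: it shows their colors are distinct, which can only enlarge the set of colors forbidden to $vb$. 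Even with $|L(vb)|\ge2$, list sizes $2,2,2$ do not meet Lemma~\ref{lem:hall}. The paper also uses $\min(|A|,|C|)\le\lfloor (D-2)/2\rfloor\le D-4$, which gives one end spoke at least three colors. Uncoloring $ab$ and $bc$ is no substitute. The five edges $va,vb,vc,ab,bc$ pairwise conflict (e.g.\ $ab$ and $vc$ have the three cross edges $av,bv,bc$), so you face a $K_5$, and for a clique Lemma~\ref{lem:cn} gives nothing beyond the sorted-size criterion. Moreover, the new lists of $ab$ and $bc$ are only guaranteed to contain their old colors, since their partners through vertices of $A$ or $C$ are not locally bounded.

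\emph{Triangle case.} Put $m=D-3$, $X_a=N(a)\setminus\{v,b,c\}$, $k_a=|X_a|$ and $h_{ab}=|X_a\cap X_b|$, and similarly for $b,c$. The spoke $va$ keeps at least $f_a=2m+2-k_a-h_{ab}-h_{ac}$ colors. Your use of the empty triple intersection gives $h_{ab}+h_{ac}\le k_a$, hence only $f_a,f_b,f_c\ge 2$, not sizes $1,2,3$. For odd $m$ a parity argument shows one bound is at least three, but you would still need to make it. For even $m$, the values $k_a=k_b=k_c=m$ and $h_{ab}=h_{ac}=h_{bc}=m/2$ are consistent with every local count. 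Then all three lists may have exactly two colors, and no counting excludes a common two-set.

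The missing idea is global: it uses that all lists come from one coloring. Suppose the three lists were one two-set. Then the sets $E_a,E_b,E_c$ of colored edges seeing the spokes, each of size $k-2$, would be rainbow. Let $P_{ij}$ be the $2h_{ij}=m$ edges from $i$ and $j$ to $X_i\cap X_j$. Any two of $P_{ab},P_{ac},P_{bc}$ lie in a common $E_i$, so their $3m$ edges need distinct colors avoiding the three triangle colors and the two list colors. Only $k-5=2m$ colors remain, a contradiction. Uncoloring the triangle edges does not help either: the six edges of $K_4[v,a,b,c]$ pairwise conflict, giving a $K_6$, and the lists of $ab,bc,ca$ are again guaranteed only their old colors.
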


\begin{proof}
Two vertices other than $v$ adjacent to all of $a,b,c$ would, together with $v$, form a $K_{3,3}$
against $a,b,c$; this proves the first assertion.  The three spokes form a clique, so we need
distinct representatives of their lists (Lemma~\ref{lem:hall}).  If $G[\{a,b,c\}]$ has no edge,
each list has size at least $D$.  If it
has exactly one edge, say $ab$, put $h=|N(a)\cap N(b)\setminus\{v\}|\le D-2$.  The numbers of
colored edges seeing $va,vb,vc$ are at most $d(a)-1+h$, $d(b)-1+h$ and $d(c)-1$, so the lists
have sizes at least $2,2,D$, and Hall's condition holds.

If $G[\{a,b,c\}]$ is a path $a-b-c$, put $A=N(a)\cap N(b)\setminus\{v\}$ and
$C=N(c)\cap N(b)\setminus\{v\}$.  Both lie in $N(b)\setminus\{v,a,c\}$, which has at most $D-3$
vertices, and $|A\cap C|\le1$, since two common members together with $v$ would form a $K_{3,3}$
against $a,b,c$.  Hence $|A|+|C|\le D-2$.  The exact numbers of colored edges seeing $va,vb,vc$
are
\[
  d(a)+|A|,\qquad d(b)-1+|A|+|C|,\qquad d(c)+|C| .
\]
Each is at most $2D-3$, and one of $|A|,|C|$ is at most $\lfloor(D-2)/2\rfloor\le D-4$ (as $D\ge6$),
so the corresponding end spoke has at least three colors and all three lists have at least two.
Hall's condition holds.

Finally let $abc$ be a triangle, put $m=D-3$, and let
\[
  X_a=N(a)\setminus\{v,b,c\},\quad X_b=N(b)\setminus\{v,a,c\},\quad X_c=N(c)\setminus\{v,a,b\},
\]
$k_a=|X_a|$, $k_b=|X_b|$, $k_c=|X_c|$ (each at most $m$), and $h_{ij}=|X_i\cap X_j|$.  By
Lemma~\ref{lem:partner}, the partners of $va$ are $bc$, the edges $bx$ with $x\in X_a\cap X_b$, and
the edges $cx$ with $x\in X_a\cap X_c$; symmetrically for $vb$ and $vc$.  Hence the lists have
sizes at least
\begin{equation}\label{eq:f3}
  f_a=2m+2-k_a-h_{ab}-h_{ac},\quad f_b=2m+2-k_b-h_{ab}-h_{bc},\quad f_c=2m+2-k_c-h_{ac}-h_{bc}.
\end{equation}
The triple intersection $X_a\cap X_b\cap X_c$ consists of the vertices other than $v$ adjacent to
all of $a,b,c$, so by hypothesis it is empty.  Then the two pair
intersections at each vertex are disjoint, each incident $h$-sum is at most the corresponding
$k_i$, and $f_a,f_b,f_c\ge2$.  If one bound is at least three, Hall's condition holds.
Otherwise $k_a=k_b=k_c=m$ and every incident $h$-sum equals $m$, so
$h_{ab}=h_{ac}=h_{bc}=m/2$; this is impossible for odd $m$.  For even $m$, let $P_{ij}$ be the set
of the two edges from $i$ and $j$ to each vertex of $X_i\cap X_j$, so $|P_{ij}|=m$.  The sets of
colored edges seeing the three spokes are exactly
\[
  E_a=\{ab,ac,bc\}\cup P_{ab}\cup P_{ac},\quad E_b=\{ab,ac,bc\}\cup P_{ab}\cup P_{bc},\quad
  E_c=\{ab,ac,bc\}\cup P_{ac}\cup P_{bc},
\]
each of size $k-2$.  If Hall's condition failed, all three lists would be one common two-set,
so each $E_i$ would be rainbow.  Every two of the groups $P_{ab},P_{ac},P_{bc}$ lie in a common
$E_i$, so their $3m$ edges would have distinct colors, avoiding the three colors of the triangle
and the two list colors; but only $k-5=2m$ colors remain.
\end{proof}

\begin{lemma}\label{lem:deg3}
$G$ has no vertex of degree three.
\end{lemma}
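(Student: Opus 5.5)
By Lemma~\ref{lem:deg3easy}, only one case remains. Let $v$ have degree three with $N(v)=\{a,b,c\}$. Then $abc$ is a triangle, and there is exactly one vertex $w\ne v$ adjacent to all of $a,b,c$. So $G[\{v,w,a,b,c\}]$ is $K_5$ minus the edge $vw$. I keep the notation $X_a,X_b,X_c$, $k_i$, $h_{ij}$ and $m=D-3$ from the proof of Lemma~\ref{lem:deg3easy}. Now $w\in X_a\cap X_b\cap X_c$, so the earlier bound $f_i\ge2$ drops to $f_i\ge1$. I therefore do not simply extend a coloring of $G-v$.

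\textbf{Planar structure.} Fix the embedding. Since $v$ has degree three, its faces are $vab$, $vbc$, $vca$, and $w$ lies on the other side of $abc$. The triangles $wab$, $wbc$, $wca$ cut that side into three regions. By the Jordan curve theorem, a vertex of $(X_a\cap X_b)\setminus\{w\}$ lies in the region bounded by $wab$, and similarly for the other pairs. So the three pair intersections meet only in $w$, and each vertex of $(X_a\cap X_b)\setminus\{w\}$ sends no edge to $c$. I also expect to use that $a$ has neighbors $v,b,c,w$ plus at most $D-4$ further vertices, and likewise for $b$ and $c$.

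\textbf{Recoloring step.} Start from a $D$-coloring of $G-v$ (Lemma~\ref{lem:safe}) and additionally uncolor the triangle edges $ab,bc,ca$. The six uncolored edges $va,vb,vc,ab,bc,ca$ span the $K_4$ on $\{v,a,b,c\}$, so they pairwise conflict and need distinct representatives from their lists (Lemma~\ref{lem:hall}).

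For a spoke $va$, the colored edges seeing it are the $k_a+1$ other edges at $a$ (including $aw$) and the partners $bx$ for $x\in X_a\cap X_b$ and $cx$ for $x\in X_a\cap X_c$. This gives $|L(va)|\ge k-(k_a+1)-h_{ab}-h_{ac}$.

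For a triangle edge $ab$, the colored edges seeing it are the other edges at $a$ and $b$, and the partners $tq$ with $t\in N(a)\cap N(b)$. Here $t$ ranges over $c$, $w$ and $(X_a\cap X_b)\setminus\{w\}$. By Lemma~\ref{lem:nobook}, $a$ and $b$ have at most $D-2$ common neighbors. Using the planar separation above, I expect to bound $|L(ab)|$ from below by a quantity that grows as $h_{ab}$ shrinks.

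Summing these lower bounds, the slack removed from the spokes by the triple vertex $w$ should be compensated by the triangle edges. The goal is list sizes that, sorted increasingly, satisfy $g_i\ge i$ for $i=1,\dots,6$. Failing that, I would apply Hall's condition directly, as in the even-$m$ argument of Lemma~\ref{lem:deg3easy}, by showing that a violating family forces too many distinct colors into a rainbow book.

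\textbf{Main obstacle.} The hard step is the tight subcase. There all $k_i$ are maximal and the pair intersections split evenly, so every list has size about $2$ or $3$ and the naive Hall ordering fails. In that case I would first try a structural counting argument in the style of the end of Lemma~\ref{lem:deg3easy}. If Hall's condition fails, several of the seeing sets $E_i$ must be rainbow and share their list colors. Their union contains the edges $wa,wb,wc$ together with all the groups $P_{ij}$, and these would then need more than $k$ distinct colors. If that counting fails, I would fall back on a different reduction. One option is to delete $v$ and $w$ together. The other is to apply Lemma~\ref{lem:cn} to the conflict graph of the uncolored edges, using a monomial with exponents tailored to the list sizes.
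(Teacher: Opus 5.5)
\textbf{There is a genuine gap.} The proposal is not a proof. Everything after the setup is phrased as an expectation (``I expect to bound'', ``should be compensated'', ``Failing that, I would\dots''). The one concrete mechanism you propose does not work in a case the lemma must handle: delete $v$, uncolor $ab,bc,ca$, and find distinct representatives for the six edges of the $K_4$ on $\{v,a,b,c\}$.

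The triangle edges need not gain anything. The edge $ab$ sees $d(a)+d(b)-6$ colored incident edges. It also sees partners through $c$, through $w$ (every edge from $w$ to $N(a)\cup N(b)$), and through the extra common neighbors of $ab$. So the only color guaranteed in $L(ab)$ is its old one.

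Here is a concrete instance. Let $D=6$, and let $G$ be $K_4$ on $a,b,c,w$ with a degree-three vertex in each face: $v$ in $abc$, $\bar a$ in $bcw$, $\bar b$ in $acw$, $\bar c$ in $abw$. This graph satisfies all the earlier lemmas, so it must be ruled out. Color $G-v$ as follows:
\begin{itemize}
  \item $ab,ac,bc,aw,bw,cw$ receive $1,\dots,6$;
  \item $a\bar b$ and $c\bar a$ receive $7$;
  \item $b\bar a$ and $c\bar b$ receive $8$;
  \item $w\bar a$ and $a\bar c$ receive $9$;
  \item $w\bar b$ and $b\bar c$ receive $10$;
  \item $w\bar c$ receives $11$.
\end{itemize}
Each repeated pair is disjoint with only two cross edges, so this is a valid coloring. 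Each triangle edge sees all twelve other colored edges, so $L(ab)=L(ac)=L(bc)=\{1,2,3\}$. The spoke $va$ sees $aw,a\bar b,a\bar c,bw,b\bar c,cw,c\bar b$, and $vb$ sees the analogous seven edges, so $L(va)=L(vb)=\{1,2,3,11\}$. The five edges $ab,ac,bc,va,vb$ pairwise conflict, but their lists contain only four colors in total. So no extension exists. Neither a sharper counting argument nor Lemma~\ref{lem:cn} can rescue this, because the actual lists violate Hall's condition. By symmetry, choosing a different degree-three vertex does not help either.

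\textbf{What is missing.} The tight cases need arguments that are not local list-coloring at $v$ with an arbitrary coloring of $G-v$.
\begin{itemize}
  \item If a neighbor of $v$ has degree at most four, the spoke lists are already large enough. This is the paper's ``direct extension''.
  \item When two spoke bounds equal one, the $r$ extra common neighbors of $ab$ form a nested fan $p_1,\dots,p_r$ in the region $abw$. For $r\ge2$, the innermost vertex $p_r$ has degree three and a neighbor of degree at most four, so one reduces at $p_r$ instead of $v$.
  \item For $r=1$, $G$ is forced to be exactly the graph above, and the paper colors it globally with ten colors.
  \item The bounds $2,2,2$, and $1,2,2$ with $n\ge4$, are excluded by counting colors on rainbow seen-sets. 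This is close to your ``structural counting'', but with the triangle kept colored.
  \item The remaining case, $D=6$ with bounds $1,2,2$, modifies the coloring of $G-v$ itself. A spare color $\gamma$ is exchanged with $\varphi(ap)$ or $\varphi(bp)$ on all of $G[R\cup\{a,b,w\}]$. This needs the choice conditions (a) and (b) on $wz$, $wq$ and $wb'$ to stay valid across the region boundary.
\end{itemize}
Your fallback of deleting $v$ and $w$ together is not developed far enough to evaluate. (A minor point: $w\in X_a$, so ``$k_a+1$ other edges at $a$'' counts $aw$ twice; this only weakens your bound.)
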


\begin{proof}
Let $N(v)=\{a,b,c\}$.  By Lemma~\ref{lem:deg3easy} we may assume that $abc$ is a triangle and
that a unique vertex $w\ne v$ is adjacent to $a,b,c$.  Delete $v$ and fix a coloring $\varphi$ of
$G-v$; the six edges $K=E(K_4[a,b,c,w])$ have distinct colors.  Put $n=D-4\ge2$.  Let $r,s,t$ be
the numbers of common neighbors of $ab,ac,bc$ outside $\{v,c,w\}$, $\{v,b,w\}$, $\{v,a,w\}$,
respectively; these $r+s+t$ vertices are distinct because $w$ is the only further common neighbor of
$a,b,c$.  Let $A,B,C$ be the numbers of remaining neighbors of $a,b,c$; we call these neighbors
\emph{private}, and each of them is adjacent to only one of $a,b,c$ (Figure~\ref{fig:deg3}).
Then
\begin{equation}\label{eq:budget3}
  r+s+A\le n,\qquad r+t+B\le n,\qquad s+t+C\le n,\qquad 0\le r,s,t\le n-1,
\end{equation}
the last because, for example, $r\ge n$ would give $ab$ at least $D-1$ common neighbors,
contrary to Lemma~\ref{lem:nobook}.  Let $P_{ab}$ be the set of the $2r$ edges from $a$ and $b$
to the additional common neighbors of $ab$, and define $P_{ac},P_{bc}$ similarly.  Let $I_a$ be
the set of the $A$ edges from $a$ to its private neighbors, and define $I_b,I_c$ similarly.  The
sets of colored edges seeing the spokes, and the resulting list bounds, are
\[
\begin{aligned}
  E_a&=K\cup P_{ab}\cup P_{ac}\cup I_a, & f_a&=2n+1-2(r+s)-A,\\
  E_b&=K\cup P_{ab}\cup P_{bc}\cup I_b, & f_b&=2n+1-2(r+t)-B,\\
  E_c&=K\cup P_{ac}\cup P_{bc}\cup I_c, & f_c&=2n+1-2(s+t)-C.
\end{aligned}
\]
Writing $g=n-r-s\ge0$, we have $0\le A\le g$ and $f_a=1+2g-A$, so $f_a\ge1$, and
\begin{equation}\label{eq:f12}
  f_a=1\iff r+s=n\text{ and }A=0,\qquad f_a=2\iff r+s=n-1\text{ and }A=1;
\end{equation}
likewise for $f_b$ with $(r+t,B)$ and for $f_c$ with $(s+t,C)$.

\begin{figure}[ht]
\centering
\includegraphics{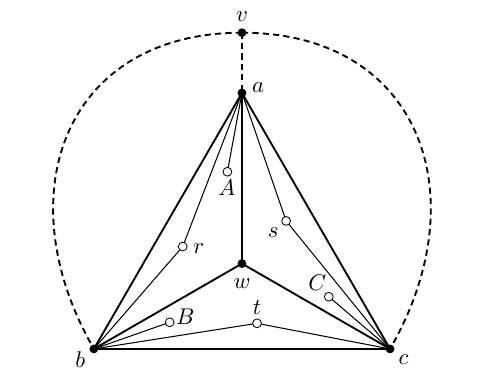}
\caption{The configuration of Lemma~\ref{lem:deg3}, with the face $abc$ of $K_4[a,b,c,w]$ drawn as
the outer face.  The dashed edges are the spokes at $v$.  Open circles labeled $r,s,t$ stand for the
additional common neighbors of $ab,ac,bc$, and those labeled $A,B,C$ for the private neighbors of
$a,b,c$; one of each is drawn, although each of these classes may be empty, and edges not used
in the argument are omitted.}
\label{fig:deg3}
\end{figure}

The subgraph $K_4[a,b,c,w]$ has four triangular regions, and $v$ lies in the region $abc$.  It is
the only vertex there: a vertex in one of the regions $abv,bcv,cav$ could reach the rest of $G$
only through two of $a,b,c$ (as $d(v)=3$), contradicting $3$-connectivity.  So every additional common neighbor of
$ab$ lies in the region $abw$, and similarly for $ac$ and $bc$.

\emph{A direct extension.}  If some neighbor of $v$, say $c$, has degree at most four, then
$N(c)=\{v,a,b,w\}$, so $s=t=C=0$, and \eqref{eq:budget3} gives $f_a,f_b\ge n+1-r\ge2$ and
$f_c=2n+1\ge5$; Hall's condition holds.  With Lemma~\ref{lem:deg3easy} this shows: \emph{if a
vertex $u$ has degree three and a neighbor of degree at most four, then every $D$-coloring of
$G-u$ extends to $G$.}

By Lemma~\ref{lem:hall}, Hall's condition holds unless at least two of $f_a,f_b,f_c$ equal one, or
the bounds are $2,2,2$, or they are $1,2,2$ in some order.

\emph{At least two bounds equal one.}  Say $f_a=f_b=1$.  By \eqref{eq:f12}, $r+s=r+t=n$ and
$A=B=0$, so $s=t$, $2s+C\le n$, and $r\le n-1$ forces $s\ge1$; hence $r=n-s\ge\lceil n/2\rceil$.
As $A=B=0$, the only neighbors of $a$ or $b$ strictly inside $abw$ are the $r$ additional common
neighbors of $ab$.  Their paths through $a$ and $b$ are nested in the disk $abw$; order them
$p_1,\dots,p_r$ from $w$ towards $ab$, and put $p_0=w$.  The fans at $a$ and $b$ cut the disk into
the regions $ap_{i-1}bp_i$ ($1\le i\le r$) and the triangle $abp_r$.  A further vertex in
$ap_{i-1}bp_i$ could attach only to $p_{i-1},p_i$, and one in $abp_r$ only to $p_r$, contradicting
$3$-connectivity.  An edge between nonconsecutive vertices of $p_0,\dots,p_r$ would cross one of
the nested paths, and since $G-\{a,b\}$ is connected, every edge $p_{i-1}p_i$ is present.  If
$r\ge2$, then $N(p_r)=\{a,b,p_{r-1}\}$ and $N(p_{r-1})\subseteq\{a,b,p_r,p_{r-2}\}$, so by the
direct extension every coloring of $G-p_r$ extends to $G$, contradicting minimality.

If $r=1$, then $n=2$, $s=t=1$ and $A=B=C=0$.  By the argument just given, applied also to $ac$ and
$bc$, each of the regions $abw,acw,bcw$ contains exactly one vertex, adjacent to its three
corners.  So $G$ is $K_4$ with one new vertex in each face, joined to the corners of that face.
For $j\in\{a,b,c,w\}$ let $\bar\jmath$ be the new vertex in the face opposite $j$ (so $\bar w=v$),
and color $ab,ac,aw,bc,bw,cw$ with $1,\dots,6$.  Each of the twelve arcs $i\to j$ between
$a,b,c,w$ lies in exactly one of the directed triangles
\[
  (a,c,w),\qquad (a,b,c),\qquad (a,w,b),\qquad (b,w,c),
\]
and we give the edge $i\bar\jmath$ the color $7$, $8$, $9$ or $10$ according to the directed
triangle containing $i\to j$, in the order listed.  Two arcs of one directed triangle can be
written $i\to j$ and $j\to l$; the edges $i\bar\jmath$ and $j\bar l$ are disjoint, and of their
cross edges only $ij$ and $i\bar l$ are present, so they do not conflict
(Lemma~\ref{lem:partner}).  This $D$-coloring of $G$ with ten colors is a contradiction.

\emph{Bounds $2,2,2$.}  By \eqref{eq:f12}, $A=B=C=1$ and $r=s=t=(n-1)/2$, so $n$ is odd and
$n\ge3$.  Hall's condition fails only if the three lists are one common two-set, and then each
$E_i$, of size $k-2$, is rainbow.  Each group $P_{ij}$ has $n-1$ edges and every two groups lie in
a common $E_i$, so their $3(n-1)$ colors are distinct and avoid the six colors of $K$ and the two
list colors.  This requires $3(n-1)\le k-8=2n-1$, contrary to $n\ge3$.

\emph{Bounds $1,2,2$.}  Say $f_a=1$ and $f_b=f_c=2$.  By \eqref{eq:f12}, $A=0$, $B=C=1$,
$r=s=n/2$ and $t=n/2-1$.  If $L(va)=\{\alpha\}$, all $k-1$ edges of $E_a=K\cup P_{ab}\cup P_{ac}$
have distinct colors, so
\[
  L(vb)\subseteq\{\alpha\}\cup\varphi(P_{ac}),\qquad L(vc)\subseteq\{\alpha\}\cup\varphi(P_{ab}),
\]
where $\varphi(P_{ab})$ and $\varphi(P_{ac})$ are disjoint and avoid $\alpha$; these lists meet at
most in $\alpha$ and have at least two colors each, and Hall's condition holds.  Otherwise Hall's
condition can fail only if the three lists are one common two-set $\{\alpha,\beta\}$, and then
$E_b$ and $E_c$, of size $k-2$, are rainbow.  If $n\ge4$, then $P_{bc}\ne\emptyset$, and a color on
$P_{bc}$ is absent from $K\cup P_{ab}$ (by $E_b$) and from $P_{ac}$ (by $E_c$), hence from $E_a$; it
lies in $L(va)$ but not in $L(vb)$, a contradiction.

It remains that $n=2$ (so $D=6$), $r=s=1$, $t=0$, and
\begin{equation}\label{eq:spare}
  L(va)=L(vb)=L(vc)=\{\alpha,\beta\}.
\end{equation}
Let $p,q$ be the additional common neighbors of $ab$ and $ac$, and $b',c'$ the private neighbors
of $b$ and $c$.  These are distinct from each other and from $v,a,b,c,w$ ($p\ne q$ because $w$ is
the only further common neighbor of $a,b,c$, and $b'\ne c'$ because $t=0$), and
\[
  N(a)=\{v,b,c,w,p,q\},\qquad N(b)=\{v,a,c,w,p,b'\},\qquad N(c)=\{v,a,b,w,q,c'\},
\]
\begin{equation}\label{eq:E122}
  E_a=K\cup\{ap,bp,aq,cq\},\qquad E_b=K\cup\{ap,bp,bb'\},\qquad E_c=K\cup\{aq,cq,cc'\}.
\end{equation}
By \eqref{eq:spare}, $E_b$ and $E_c$ are rainbow, and no colored edge at $a$, $b$ or $c$ has a
\emph{spare} color $\alpha$ or $\beta$.

The vertices $p$ and $q$ lie in the regions $abw$ and $acw$.  By $3$-connectivity, every
component of $G-\{a,b,c,w\}$ inside one of these regions is adjacent to all three corners of
that region, so $w$ has a neighbor in each of the two regions.  As $w$ has at most three
neighbors outside $\{a,b,c\}$, it has exactly one neighbor in at least one of the two regions.
By the symmetry $b\leftrightarrow c$, $p\leftrightarrow q$, $b'\leftrightarrow c'$ we may assume
that this region is $abw$, and we call this neighbor $z$.  Let $R$ be the set of vertices
strictly inside $abw$.  The only edge from $a$ into $R$ is $ap$, and the edges from $b$ into $R$
are $bp$ and possibly $bb'$.  A diamond with a vertex in $R$ and a vertex strictly outside the
closed region $abw$ has exactly two vertices in $\{a,b,w\}$, since it is $2$-connected; these form
its spine, and its tips lie on opposite sides.  So, apart from $ab,aw,bw$, only edges from
$\{a,b,w\}$ into $R$ can conflict with edges outside the closed region.

We shall give one edge $e\in\{ap,bp\}$ a spare color $\gamma$.  Write $c_0=\varphi(e)$; as $e$ lies
in $E_b$, the color $c_0$ is not spare.  We choose $e$ and $\gamma$ so that
\begin{enumerate}[label=\textup{(\alph*)}]
  \item $\varphi(wz)\notin\{c_0,\gamma\}$, and
  \item if $z=p$, then $\varphi(wq)\ne\gamma$ in case $e=ap$, and $\varphi(wb')\ne\gamma$ in case
        $e=bp$; a condition on an edge that is absent from $G$ holds automatically.
\end{enumerate}
If $z\ne p$, choose $e$ with $\varphi(e)\ne\varphi(wz)$, which is possible as
$\varphi(ap)\ne\varphi(bp)$, and a spare color $\gamma\ne\varphi(wz)$.  If $z=p$, then
$c_0\ne\varphi(wp)$ for both choices of $e$, since $wp$ meets $ap$ and $bp$.  If moreover
$\varphi(wp)$ is not spare, take $e=ap$ and a spare color $\gamma\ne\varphi(wq)$.  If
$\varphi(wp)$ is spare, let $\gamma$ be the other spare color, and take $e=ap$ unless $wq$ is an
edge with $\varphi(wq)=\gamma$; in that case take $e=bp$, and then $\varphi(wb')\ne\gamma$
because $wb'$ and $wq$ meet at $w$.  In every case (a) and (b) hold.

Now exchange the colors $c_0$ and $\gamma$ on all edges of $G[R\cup\{a,b,w\}]$.  Among the edges
of this subgraph at $a$, $b$ or $w$, only $e$ has color $c_0$ or $\gamma$: the edges $ab,aw,bw$,
the other edge of $\{ap,bp\}$ and the edge $bb'$ lie in the rainbow set $E_b$ together with $e$,
and no edge of $E_b$ has a spare color, while the color of $wz$ is neither by (a).  So $e$ now has
color $\gamma$, and every other edge from $\{a,b,w\}$ into $R$ keeps its color.  Conflicts between
two edges of the subgraph are respected, since we permuted colors there.  As shown above, apart
from $ab,aw,bw$ only edges from $\{a,b,w\}$ into $R$ can conflict with edges outside the closed
region; so it remains to compare $e$ with the conflicting edges $e'$ that have an end strictly
outside the closed region.  If $e'$ meets $e$, it does so at $a$ or $b$, and no other colored
edge at $a$ or $b$ has the spare color $\gamma$.  Otherwise $e$ and $e'$ lie in a diamond whose spine is $ab$, $aw$ or $bw$.  If the spine
is $ab$, the tip outside is a common neighbor of $a$ and $b$, namely $v$ or $c$, so $e'$ is $av$ or
$bv$, which are uncolored, or $ac$ or $bc$, whose colors lie in $\varphi(K)$ and so are not
$\gamma$.  If the spine is $aw$ or $bw$, then $p$ is a tip, so $wp\in E(G)$ and $z=p$, and $e'$
joins $w$ to a common neighbor of $w$ and the end of $e$ in $\{a,b\}$.  So $e'$ is $wc$, whose
color lies in $\varphi(K)$, or $wq$ if $e=ap$, or $wb'$ if $e=bp$, and by (b) its color is not
$\gamma$.  Hence the recolored $\varphi$ is a valid coloring of $G-v$.

Of the edges in \eqref{eq:E122} only $e$ has changed.  It is not in $E_c$, and in $E_b$ the color
$c_0$ occurred only on $e$.  Let $\delta$ be the spare color other than $\gamma$, and put
$\varphi(va)=\delta$, $\varphi(vb)=c_0$ and $\varphi(vc)=\gamma$ (exchanging $vb$ and $vc$ if the
symmetry was used).  These three colors are distinct and are absent from $E_a$, $E_b$ and $E_c$,
respectively, so $\varphi$ extends to $G$, a contradiction.
\end{proof}

\begin{corollary}\label{cor:structure}
For every $D\ge6$ a minimal counterexample is $3$-connected, has minimum degree at least four,
contains no $\Bk_{D-1}$, and has every edge in a triangle.
\end{corollary}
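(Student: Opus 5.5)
The corollary is a summary of the structural lemmas already proved for the fixed minimal counterexample $G$. The plan is to cite each lemma for the corresponding property. There is no new combinatorial content.

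Fix $D\ge6$ and let $G$ be a minimal counterexample, chosen as at the start of Section~\ref{sec:mincounter}: fewest vertices, and subject to this fewest edges.
\begin{enumerate}
\item $3$-connectivity is Lemma~\ref{lem:3conn}. That lemma also records $|V(G)|\ge4$ via Lemma~\ref{lem:mindeg3}.
\item Lemma~\ref{lem:mindeg3} gives minimum degree at least three, and Lemma~\ref{lem:deg3} excludes vertices of degree three. Together they give minimum degree at least four.
\item The absence of a book $\Bk_{D-1}$, equivalently that every edge has at most $D-2$ common neighbors, is Lemma~\ref{lem:nobook}.
\item That every edge lies in a triangle is Lemma~\ref{lem:triangle}.
\end{enumerate}

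The only point needing care is logical ordering. Each of these lemmas was proved for the same $G$, and the later ones use only the earlier ones and minimality, so no circularity arises. Specifically:
\begin{itemize}
\item Lemma~\ref{lem:deg3} invokes Lemmas~\ref{lem:deg3easy}, \ref{lem:3conn} and~\ref{lem:nobook}.
\item Lemma~\ref{lem:nobook} invokes Lemma~\ref{lem:3conn}.
\end{itemize}

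I expect no real obstacle here; the substantive work was Lemma~\ref{lem:deg3}, which is already established. The proof will be a one-paragraph citation of these lemmas, noting that the hypothesis $D\ge6$ was used throughout, for instance in Lemma~\ref{lem:nobook} (via $m\ge5$) and in Lemma~\ref{lem:deg3} (via $n\ge2$).
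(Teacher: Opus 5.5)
Your proposal is correct and matches the paper's proof, which simply combines Lemmas~\ref{lem:mindeg3}--\ref{lem:triangle} with Lemma~\ref{lem:deg3}. Your check of the logical order is accurate: Lemma~\ref{lem:deg3} relies on Lemmas~\ref{lem:deg3easy}, \ref{lem:3conn} and~\ref{lem:nobook}, and Lemma~\ref{lem:nobook} relies on Lemma~\ref{lem:3conn}, so there is no circularity.
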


\begin{proof}
Combine Lemmas~\ref{lem:mindeg3}--\ref{lem:triangle} and~\ref{lem:deg3}.
\end{proof}

\section{Innermost separating triangles}\label{sec:disk}

Fix $D\ge6$ and a minimal counterexample $G$ with a plane embedding; it has the properties
listed in Corollary~\ref{cor:structure}.  A triangle is \emph{separating} if both of its sides
contain vertices.

\begin{lemma}\label{lem:innermost}
Suppose $G$ has a separating triangle, and choose one, $T=xyz$, with as few vertices inside as
possible.  Let $I\ne\emptyset$ be the set of vertices strictly inside $T$.  Then:
\begin{enumerate}
  \item every triangle with a vertex in $I$ bounds a face, and every edge with an endpoint in
        $I$ lies in at most two triangles;
  \item for $v\in I$ the link of $v$ is a subgraph of its rotation cycle; in particular it has
        no chord;
  \item each edge of $T$ has at most one common neighbor in $I$, and no vertex of $I$ is adjacent
        to all of $x,y,z$;
  \item each vertex of $T$ has a neighbor in $I$ and a neighbor strictly outside $T$.
\end{enumerate}
If $G$ has no separating triangle, every triangle bounds a face, every edge lies in at most two
triangles, and (ii) holds at every vertex.
\end{lemma}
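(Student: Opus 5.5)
The plan is to use only planarity, $3$-connectivity, minimum degree at least four (Corollary~\ref{cor:structure}) and the minimality of $T$.

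\emph{Step (i).} Let $S$ be a triangle with a vertex $u\in I$. By the Jordan curve theorem an edge cannot cross $T$, so every vertex of $S$ lies in $I\cup\{x,y,z\}$, and $S$ lies in the closed disk bounded by $T$. Hence the interior of $S$ is contained in the interior of $T$. If $S$ were separating, the vertices strictly inside $S$ would form a subset of $I$. This subset is proper because $u\in I$ lies on $S$ and not strictly inside it. That contradicts the choice of $T$. So $S$ is not separating.

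It remains to rule out vertices on the outer side of $S$ with no vertex inside, which would make the inner side of $S$ a face. Suppose both sides are empty of vertices. Then $G$ would be the triangle $S$, contradicting minimum degree four. Since the outer side of $S$ contains the vertices outside $T$, whenever $S\ne T$ the empty side must be the inner one. Therefore $S$ bounds a face.

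An edge lies on exactly two faces, and every triangle through it is facial. So an edge with an endpoint in $I$ lies in at most two triangles.

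\emph{Step (ii).} Take $v\in I$ and neighbors $a,b$ of $v$ with $ab\in E(G)$. Then $vab$ is a triangle containing $v$, so by (i) it is a face. Hence $a$ and $b$ are consecutive in the rotation at $v$, and $ab$ is an edge of the rotation cycle. A chord of the link would be exactly such an edge joining nonconsecutive neighbors, so the link has no chord.

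\emph{Step (iii).} Suppose $xy$ had two common neighbors $u,u'\in I$. The triangles $xyu$ and $xyu'$ are faces by (i). Together with the two faces of $xy$ they would show that the edge $xy$ lies only on the faces $xyu$ and $xyu'$. Both of those faces lie inside $T$, but $xy$ must also bound a region outside $T$, which is impossible.

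Now suppose $u\in I$ is adjacent to $x,y,z$. Then $xyu$, $yzu$ and $zxu$ are faces by (i), and they fill the disk bounded by $T$. Hence $I=\{u\}$ and $d(u)=3$, contradicting minimum degree four.

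\emph{Step (iv).} Suppose $x$ had no neighbor in $I$. Then every path from $I$ to the rest of $G$ would pass through $\{y,z\}$, contradicting $3$-connectivity. The same argument applies to the outside of $T$, which is nonempty because $T$ is separating.

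\emph{No separating triangle.} If $G$ has no separating triangle, the argument of (i) applies to every triangle. One side of each triangle is free of vertices, and that side is a face, because otherwise $G$ would equal the triangle. Steps (ii) and the bound on triangles through an edge then follow in the same way at every vertex.

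The main obstacle I expect is making the face argument in (i) rigorous. One must check that a triangle whose interior contains no vertex also contains no edge in its interior, so that it genuinely bounds a face. Such an edge would have to be a chord between two corners of the triangle, and those corners are already adjacent, so simplicity excludes it.
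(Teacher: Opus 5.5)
Your proof is correct and follows essentially the same route as the paper. Minimality of $T$ forces the inner side of any triangle through $I$ to be vertex-free, hence chordless and a face; then (ii)--(iv) and the no-separating-triangle case follow from facial triangles, the two faces at an edge, minimum degree four and $3$-connectivity, exactly as in the paper. (The sentence about ``ruling out vertices on the outer side'' is muddled, but the fact you actually need, and do state, is that the outer side of $S$ contains the vertices strictly outside $T$, so the vertex-free side must be the inner one.)
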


\begin{proof}
An edge with an endpoint in $I$ lies in the closed disk bounded by $T$, so a triangle $C$ with a
vertex in $I$ lies in this disk and differs from $T$.  The inside of $C$ is then contained in the
inside of $T$, and a vertex of $T$ not on $C$ lies outside $C$.  If the inside of $C$ contained a
vertex, $C$ would be a separating triangle with fewer vertices inside than $T$.  So the inside of
$C$ contains no vertex, hence no edge (a triangle has no chord), and is a face.
An edge has two sides and at most one facial triangle on each, which proves (i).

If two nonconsecutive neighbors of $v\in I$ were adjacent, their triangle with $v$ would have
neighbors of $v$ on both sides, so it would not bound a face, contrary to (i).  This proves (ii).
A facial triangle on the interior side of an edge of $T$ is unique, which gives the first part
of (iii).  If $u\in I$ were adjacent to $x,y,z$, the three triangles $uxy,uyz,uzx$ would be facial
and would close the rotation at $u$, forcing $d(u)=3$.  If $x$ had no neighbor in $I$, then
$\{y,z\}$ would separate $I$ from $x$; if it had no neighbor outside $T$, then $\{y,z\}$ would
separate the exterior from $x$.  Both contradict $3$-connectivity, and the same holds for $y,z$.
Without a separating triangle, every triangle has an empty side, which is a face, and the same
arguments apply everywhere.
\end{proof}

From now on, when a separating triangle exists, $T$ and $I$ are as in
Lemma~\ref{lem:innermost}; vertices of $I$ are \emph{interior}, and faces inside $T$ are
\emph{interior faces}.  An interior vertex is \emph{eligible} if it has at most one neighbor on
$T$, and \emph{double-boundary} if it has exactly two.  By Lemma~\ref{lem:innermost}(iii) an
edge of $T$ has at most one common neighbor in $I$; we call it the \emph{interior tip} of that
edge.  The double-boundary vertices are exactly the interior tips.  Let $k_1$ and $k_2$ count
the interior vertices with exactly one and exactly two neighbors on $T$.  Then $k_2\le3$, and
there are $k_1+2k_2$ edges between $T$ and $I$.  If there is no separating triangle, every
vertex and every face is called interior (so $I=V(G)$), $V(T)$ and $E(T)$ are empty, every vertex
is eligible, and the statements below apply to the whole graph.

\begin{lemma}\label{lem:charge}
Give each interior vertex $v$ the charge $d(v)-6$ and each interior face $f$ the charge
$2|f|-6$.  Then
\begin{equation}\label{eq:charge}
  \sum_{v\in I}\bigl(d(v)-6\bigr)+\sum_{f\text{ inside }T}\bigl(2|f|-6\bigr)=-k_1-2k_2 .
\end{equation}
For the whole graph, with all vertices and faces, the same charges sum to $-12$.
\end{lemma}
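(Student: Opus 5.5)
I will apply Euler's formula to the plane graph $H=G[I\cup V(T)]$, that is, $T$ together with everything inside it, and then subtract the contributions of $T$ and of the outer face.

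First, $H$ is connected. Every vertex of $I$ is joined to $T$ inside the disk, because $G$ is connected and any path from an interior vertex to the exterior must pass through $T$. Let $n_H=|I|+3$, let $e_H$ be the number of edges of $H$, and let $f_H$ be the number of faces of $H$. Each interior face of $G$ is a face of $H$, and $H$ has exactly one more face, the outside of $T$, which has length $3$. Euler's formula gives $n_H-e_H+f_H=2$, which I rewrite in the standard charge form
\[
\sum_{v\in V(H)}\bigl(d_H(v)-6\bigr)+\sum_{f}\bigl(2|f|-6\bigr)=6n_H-6e_H+6f_H\cdot(-1)\cdot(-1)\ldots
\]
More precisely, I use $\sum_v d_H(v)=2e_H=\sum_f |f|$. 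Then
\[
\sum_v(d_H(v)-6)+\sum_f(2|f|-6)=2e_H-6n_H+4e_H-6f_H=-6(n_H-e_H+f_H)=-12.
\]

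Next I separate out the boundary terms. The outer face has length $3$ and contributes $0$. For $v\in I$ all neighbours of $v$ lie in $H$, so $d_H(v)=d(v)$. For $x\in V(T)$, $d_H(x)$ equals $2$ plus the number of neighbours of $x$ in $I$, so summing over $T$ gives
\[
\sum_{x\in T}(d_H(x)-6)=6+(k_1+2k_2)-18=k_1+2k_2-12,
\]
since the edges from $T$ to $I$ number $k_1+2k_2$. Hence
\[
\sum_{v\in I}(d(v)-6)+\sum_{f\text{ inside }T}(2|f|-6)=-12-(k_1+2k_2-12)=-k_1-2k_2,
\]
which is \eqref{eq:charge}.

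For the whole graph, the same computation applied to $G$ itself, which is connected by Lemma~\ref{lem:3conn}, gives $-12$ directly. This is consistent with the convention $I=V(G)$ and $k_1=k_2=0$, apart from the constant.

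The only point needing care is that face lengths are counted as boundary walk lengths, so that $\sum_f|f|=2e_H$ holds. Here $H$ is $2$-connected, since $T$ is a cycle and $G$ is $3$-connected, so every face is bounded by a cycle and this is immediate. I expect no real obstacle.
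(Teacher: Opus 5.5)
Your proof is correct and is essentially the paper's argument: both apply Euler's formula to the plane graph formed by $T$ and everything inside it. The paper counts the $m-n+1$ bounded faces directly, while you apply the global $-12$ identity to $H$ and then remove the three vertices of $T$ (contributing $k_1+2k_2-12$) and the triangular outer face (contributing $0$); just delete the garbled first display, which your ``More precisely'' computation supersedes.
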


\begin{proof}
Let $n=|I|$ and let $m$ be the number of edges with an endpoint in $I$.  The vertices and edges in
the closed disk of $T$ form a plane graph with $n+3$ vertices and $m+3$ edges.  It is connected,
because a path in $G$ from a vertex of $I$ to $T$ stays in the disk until it first meets $T$.  So
it has $m-n+1$ bounded faces, and these are exactly the interior faces.  The interior degrees sum to $2m-(k_1+2k_2)$, and the interior face lengths sum
to $2m+3$, since each edge with an endpoint in $I$ lies on two interior faces and each edge of
$T$ on one.  Hence the total is $2m-(k_1+2k_2)-6n+2(2m+3)-6(m-n+1)=-(k_1+2k_2)$.  If $G$ has
$N$ vertices, $M$ edges and $F$ faces, the total for the whole graph is $2M-6N+2\cdot2M-6F=-12$
by Euler's formula.
\end{proof}

\begin{lemma}\label{lem:facerule}
Let every interior face of length $\ell\ge4$ send $2-6/\ell$ to each incident interior vertex
(the \emph{face rule}).  Then every interior face $f$ retains exactly $b_f(2-6/\ell)\ge0$, where $b_f$ is the number of
vertices of $T$ on $f$, and every interior vertex $v$ has charge at least $d(v)-6\ge-2$.  Globally
every face retains zero.
\end{lemma}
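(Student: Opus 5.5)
The plan is a direct bookkeeping argument: count what a face sends out, then what a vertex can receive.

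\emph{Faces.} Let $f$ be an interior face of length $\ell$. If $\ell=3$, its charge is $2\ell-6=0$ and it sends nothing. The count $b_f(2-6/\ell)$ is also $0$ for $\ell=3$, so the claim holds. If $\ell\ge4$, the face lies in the closed disk of $T$. Since $G$ is $3$-connected, hence $2$-connected, $f$ is bounded by a cycle (the planar facts of Section~\ref{sec:prelim}). So $f$ has exactly $\ell$ distinct incident vertices. Each of them is interior or lies on $T$, and $b_f$ of them lie on $T$. The face sends $2-6/\ell$ to each of the $\ell-b_f$ interior ones. It therefore retains
\[
2\ell-6-(\ell-b_f)(2-6/\ell)=\ell(2-6/\ell)-(\ell-b_f)(2-6/\ell)=b_f(2-6/\ell),
\]
which is nonnegative because $\ell\ge4$.

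\emph{Whole graph.} Here there is no separating triangle, so every face is interior and $V(T)=\emptyset$. Then $b_f=0$, and every face retains exactly zero.

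\emph{Vertices.} An interior vertex $v$ only gains charge under the face rule, since faces give and vertices never send. Its charge is therefore at least its initial charge $d(v)-6$. By Corollary~\ref{cor:structure}, $d(v)\ge4$, so this is at least $-2$.

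\emph{Main obstacle.} The only delicate point is that a face might repeat a vertex, which would break the identity ``the face has exactly $\ell$ distinct incident vertices''. This is settled by $2$-connectivity via Lemma~\ref{lem:3conn}, so there is no real obstacle.
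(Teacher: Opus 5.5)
Your proposal is correct and follows the paper's proof essentially verbatim. Faces are bounded by cycles by $3$-connectivity, so a face of length $\ell$ has $\ell-b_f$ interior vertices and retains $b_f(2-6/\ell)$. Interior vertices only receive charge and have degree at least four.
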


\begin{proof}
Faces are bounded by cycles because $G$ is $3$-connected, so a face of length $\ell$ has
$\ell-b_f$ interior vertices and retains $(2\ell-6)-(\ell-b_f)(2-6/\ell)=b_f(2-6/\ell)$.  Vertices
only receive charge, and $d(v)\ge4$.
\end{proof}

\subsection{The octahedral cap and the equality case}

Let $xyz$ be a triangle.  Three vertices $A,B,C$ with
\[
  N(A)=\{B,C,x,y\},\qquad N(B)=\{A,C,y,z\},\qquad N(C)=\{A,B,x,z\}
\]
form an \emph{octahedral cap} on $xyz$; together with $x,y,z$ they span an octahedron.

\begin{lemma}\label{lem:octcap}
If $A,B,C$ form an octahedral cap on a triangle $xyz$, then every $D$-coloring of
$G-\{A,B,C\}$ extends to $G$, for every $D\ge6$.
\end{lemma}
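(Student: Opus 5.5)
The plan is to delete $A,B,C$, fix a $D$-coloring of $G-\{A,B,C\}$, which is a valid partial coloring of $G$ by Lemma~\ref{lem:safe}, and extend it to the nine uncolored edges. These are the three \emph{cap edges} $AB,BC,CA$ and the six \emph{spokes} $Ax,Ay,By,Bz,Cz,Cx$. I would first bound the lists via Lemma~\ref{lem:lists} and Lemma~\ref{lem:partner}, then color the six spokes, and finally color the cap edges greedily.

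\emph{Cap edges.} Consider $AB$. Every edge at $A$ or $B$ is uncolored. The common neighbors of $A$ and $B$ are $C$ and $y$. The partners through $C$ are $Cx,Cz$, which are uncolored. The partners through $y$ are $yx,yz$, together with $yq$ for $q\in N(A)\cup N(B)$; every such $q$ other than $x,z$ is uncolored or is $A,B$ itself. So at most two colored edges see $AB$, and $|L(AB)|\ge 2D-3\ge 9$. Moreover $AB$ conflicts with at most the other eight uncolored edges. Hence, once the spokes are colored, each cap edge still has at least $2D-3-8\ge 1$ free colors after the other uncolored edges are taken into account. Coloring the three cap edges last in any order is then a greedy step, by Lemma~\ref{lem:greedy} with a removal order putting the cap edges last.

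\emph{Spokes.} Consider $Ax$; the other spokes are symmetric. The colored edges at $x$ other than $Ax,Cx$ number at most $D-2$. The common neighbors of $A$ and $x$ are $C$ and $y$. Through $C$ the partners $CB,Cz$ are uncolored. Through $y$ the partners are $yz$, $yB$ (uncolored), and $yq$ for $q\in N(x)\cap N(y)\setminus\{A\}$. By Lemma~\ref{lem:nobook}, the edge $xy$ has at most $D-2$ common neighbors, and these include $A$ and $z$. So the colored partners, $yz$ included, number at most $D-3$, and the colored edges seeing $Ax$ number at most $(D-2)+(D-3)=2D-5$. This gives $|L(Ax)|\ge 4$ for every spoke.

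Next I would determine the conflict graph $H$ on the six spokes, using the cross-edge criterion of Lemma~\ref{lem:partner}.
\begin{itemize}
\item Spokes at a common vertex conflict.
\item Pairs such as $Ax,By$ conflict, since among their cross edges $AB,Ay,xy$ are present.
\item The three "antipodal" pairs $\{Ax,Bz\}$, $\{Ay,Cz\}$, $\{By,Cx\}$ do not conflict. For $\{Ax,Bz\}$, the only present cross edges are $AB$ and $xz$, because $Az,Bx\notin E(G)$ by the prescribed neighborhoods.
\end{itemize}
So $H$ is a subgraph of $K_6$ minus a perfect matching, which is the octahedron $K_{2,2,2}$. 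I would then invoke the fact that $K_{2,2,2}$ is $3$-choosable, or, to stay self-contained, apply Lemma~\ref{lem:cn} to $P_{K_{2,2,2}}$ with all exponents $t_i=2$, giving $12=|E(K_{2,2,2})|$. Either way $H$ is colorable from lists of size $4$, and Lemma~\ref{lem:greedy} passes this to subgraphs.

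The main obstacle I expect is the exact verification of the conflict structure. One must check that no two antipodal spokes conflict through some extra adjacency, and that the colored-partner count for a spoke really is at most $D-3$, with no double counting of $yz$. If the octahedron's choosability needs an explicit certificate, I would compute one nonzero coefficient of $\prod(X_i-X_j)$ with exponent $2$ at each spoke. Failing that, I would use the slack that the lists have size $4$ rather than $3$: color one antipodal pair with a common color if their lists meet, and otherwise color greedily.
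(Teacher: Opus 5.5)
Your proposal is correct. Up to the coloring of the six arms it is the paper's proof. The list bounds are the same: at least $2D-3$ for $AB,BC,CA$, and at least $4$ for each arm. Your count $(D-2)+(D-3)$, obtained via Lemma~\ref{lem:nobook}, matches the paper's $3+o_x+h_{xy}$ with $o_x,h_{xy}\le D-4$. The arm conflict graph is the same, namely $K_6$ minus the antipodal pairs $\{Ax,Bz\},\{Ay,Cz\},\{By,Cx\}$. The greedy finish on the inner triangle is also the same.

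The difference is the arm step. You appeal to the $3$-choosability of $K_{2,2,2}$ (Erd\H{o}s--Rubin--Taylor), or to Lemma~\ref{lem:cn} with all exponents $2$. The Nullstellensatz option does work, although you did not compute the coefficient. Take the Eulerian orientation $\{Ax,Bz\}\to\{Ay,Cz\}\to\{By,Cx\}\to\{Ax,Bz\}$. Its Eulerian subdigraphs with $0,3,6,9,12$ arcs number $1,8,20,8,1$, so the coefficient is $\pm 6\ne0$.

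The paper instead uses the slack from lists of size $4$, and more simply than your fallback does. It colors $Ax$ and $Bz$ arbitrarily; they do not conflict, so no common color is needed. Each of $Ay,By,Cz,Cx$ sees both and keeps at least two colors. These four form the cycle $Ay\,By\,Cz\,Cx\,Ay$ in the conflict graph, so Lemma~\ref{lem:evencycle} finishes.

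The second branch of your fallback (``otherwise color greedily'') is not justified as stated. In a $4$-regular graph with lists of size $4$, an unspecified greedy order can leave the last arm with all four list colors used on its neighbors. The paper's arbitrary-pair step makes that case split unnecessary.

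Your route yields a stronger fact (lists of size $3$ would suffice on the arms), which this lemma does not need. The paper's route is self-contained and elementary.
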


\begin{proof}
Let $O_x$ be the set of neighbors of $x$ outside $\{x,y,z,A,B,C\}$, put $o_x=|O_x|$, and
$h_{xy}=|O_x\cap O_y|$; define $o_y,o_z,h_{yz},h_{xz}$ similarly.  Each $o$ and $h$ is at most $D-4$.
The uncolored edges are
\[
  U=(AB,AC,BC,Ax,Ay,By,Bz,Cx,Cz).
\]
We call $AB,AC,BC$ the \emph{inner} edges and the other six the \emph{arms}.  By
Lemma~\ref{lem:partner}, each inner edge has no colored incident edge and exactly two
colored partners, namely two edges of $xyz$ ($xy,yz$ for $AB$; $xy,xz$ for $AC$; $xz,yz$ for $BC$),
so its list has at least $2D-3$ colors.  The arm $Ax$ sees $2+o_x$ colored edges at $x$ and the
colored partners $yz$ and the $h_{xy}$ edges from $y$ to $O_x\cap O_y$; the other arms are
analogous:
\[
\begin{array}{c|cccccc}
 \text{arm} & Ax & Ay & By & Bz & Cx & Cz\\ \hline
 \text{incident} & 2+o_x & 2+o_y & 2+o_y & 2+o_z & 2+o_x & 2+o_z\\
 \text{partners} & 1+h_{xy} & 1+h_{xy} & 1+h_{yz} & 1+h_{yz} & 1+h_{xz} & 1+h_{xz}
\end{array}
\]
(common neighbors in $\{A,B,C\}$ contribute only uncolored edges).  So every arm has a list of
at least $2D-1-(3+2(D-4))=4$ colors.  The three inner edges conflict with all other edges of $U$.
Among the arms, by the last statement of Lemma~\ref{lem:partner}, the only nonconflicting
pairs are $\{Ax,Bz\}$, $\{Ay,Cz\}$ and $\{By,Cx\}$.  Color $Ax$ and $Bz$ arbitrarily from their
lists.  The arms $Ay,By,Cz,Cx$ form the cycle $Ay\,By\,Cz\,Cx\,Ay$ in the conflict graph and each
sees at most two colors on $Ax,Bz$, so they retain at least two colors each and are colored by
Lemma~\ref{lem:evencycle}.  Each inner edge then retains at least $2D-3-6\ge3$ colors, and the
inner triangle is colored greedily.
\end{proof}

\begin{lemma}\label{lem:equality}
Suppose $T$ exists, and charge is moved only between interior vertices and interior faces,
starting from the charges of Lemma~\ref{lem:charge}, so that finally:
\begin{enumerate}[label=\textup{(\alph*)}]
  \item every interior face $f$ of length $\ell$ has charge at least $b_f(2-6/\ell)$;
  \item every interior vertex with no neighbor on $T$ has nonnegative charge;
  \item every interior vertex with exactly one neighbor on $T$ has charge at least $-1$, and if
        its charge is $-1$ then it has no neighbor of degree four;
  \item every double-boundary vertex has charge at least $-2$, and if its charge is $-2$ then it
        has degree four and all its incident faces are triangles.
\end{enumerate}
Then the interior tips of the three edges of $T$ form an octahedral cap on $T$, so $G$ is not a
minimal counterexample.
\end{lemma}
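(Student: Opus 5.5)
Since charge only moves between interior vertices and interior faces, the total is still given by \eqref{eq:charge}. I would compare this total with the lower bounds (a)--(d) and show that they can hold only if every bound is attained with equality, with $k_1=0$ and $k_2=3$. After that I would read off the local structure and obtain the octahedral cap.

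\emph{Step 1: the charge count.} Summing (a)--(d) gives a total of at least
\[
\sum_f b_f\Bigl(2-\frac6{|f|}\Bigr)-k_1-2k_2 .
\]
By \eqref{eq:charge} the total equals $-k_1-2k_2$. Hence every face bound in (a) and every vertex bound in (b)--(d) holds with equality, and every interior face containing a vertex of $T$ is a triangle; otherwise it would contribute a positive amount $b_f(2-6/\ell)$. So each vertex of $T$ has all its interior faces triangular. In particular, every double-boundary vertex has charge exactly $-2$, so it has degree four and lies only on triangles. Every vertex with one neighbor on $T$ has charge exactly $-1$, and so has no four-vertex neighbor.

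\emph{Step 2: $k_2=3$, $k_1=0$.} Consider a vertex $x$ of $T$. Its interior faces are triangles, so its interior neighbors, taken in rotation order from $y$ to $z$, form a path $y,u_1,\dots,u_j,z$ of facial triangles. Here $j\ge1$ by Lemma~\ref{lem:innermost}(iv), and there are no chords by Lemma~\ref{lem:innermost}(ii). Then $u_1$ is the interior tip of $xy$ and $u_j$ is the interior tip of $xz$. Thus all three tips exist, giving $k_2=3$.

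Suppose $j\ge2$ at some $x$. Then $u_1$ is a double-boundary vertex of degree four adjacent to $x,y,u_2$ and one further vertex. Its faces are all triangles. I would show that this forces a neighbor with exactly one $T$-neighbor to be adjacent to a four-vertex, or else forces some tip to be adjacent to all of $x,y,z$, which contradicts Lemma~\ref{lem:innermost}(iii). The cleanest route is to make $u_2$ itself the offending vertex: $u_2$ has exactly one neighbor on $T$, namely $x$, and $u_2$ is adjacent to the four-vertex $u_1$. This contradicts (c), using that the charge of $u_2$ is $-1$ from Step 1.

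Hence $j=1$ at every vertex of $T$. So $x$ has a unique interior neighbor, which is simultaneously the tip $A$ of $xy$ and the tip $C$ of $xz$. Similarly at $y$ and $z$.

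\emph{Step 3: identifying the cap.} This step needs the most care. Because the tips are distinct, the labelling of Step 2 is inconsistent. The correct reading is as follows: $x$ has interior neighbors exactly the two tips $A$ (of $xy$) and $C$ (of $xz$), and these are consecutive in the rotation. So the fan at $x$ is $y,A,C,z$, that is, $j=2$, with neither tip having an interior neighbor on $T$ beyond its two. Step 2 must therefore be redone: the argument rules out any vertex with exactly one neighbor on $T$, and every interior neighbor of $x$ is then a tip. Since $x$ is adjacent to exactly the two tips $A$ and $C$, the fan gives $AC\in E(G)$. Similarly $AB$ and $BC$ are edges.

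Each tip has degree four with exactly the neighbors required: for example, $N(A)=\{B,C,x,y\}$. This is exactly an octahedral cap on $T$. By Lemma~\ref{lem:octcap}, every coloring of $G-\{A,B,C\}$ extends, which contradicts minimality.

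The main obstacle is ruling out interior neighbors of $T$ that are not tips (the case $k_1>0$). The intended argument is that such a vertex lies next to a double-boundary four-vertex in the fan, and this contradicts the equality case of (c).
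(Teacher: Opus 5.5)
Your overall route is exactly the paper's: equality in the charge count, then the fan of facial triangles at each vertex of $T$, then a contradiction with (c), then Lemma~\ref{lem:octcap}. The problem is in Step 2, and your Step 3 notices it but does not supply the corrected argument.

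\textbf{The error in Step 2.} The claim that $u_2$ has exactly one neighbor on $T$ is false when $j=2$, because then $u_2=u_j$ is the interior tip of $xz$ and is adjacent to $z$. The conclusion $j=1$ is also impossible. The single interior neighbor would lie on the faces $xyu_1$ and $xu_1z$, so it would be adjacent to all of $x,y,z$, contrary to Lemma~\ref{lem:innermost}(iii). In Step 3 you only assert that ``the argument rules out any vertex with exactly one neighbor on $T$''. As written, that argument does not do so.

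\textbf{The repair (this is what the paper does).}
\begin{enumerate}
  \item By Lemma~\ref{lem:innermost}(iii), $u_1\ne u_j$, so $j\ge2$.
  \item Suppose $j\ge3$. Then $u_2$ is not adjacent to $y$, since the unique interior common neighbor of $x,y$ is $u_1$. It is not adjacent to $z$, since the unique interior common neighbor of $x,z$ is $u_j\ne u_2$.
  \item So $u_2$ has exactly one neighbor on $T$, its charge is $-1$ by Step~1, and it is adjacent to the four-vertex $u_1$. This contradicts (c).
  \item Hence $j=2$. The face $xu_1u_2$ gives the edge between the two tips at $x$, and every interior vertex with a neighbor on $T$ is a tip, so $k_1=0$.
\end{enumerate}
With this substituted for Step~2, your Step~3 goes through.

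A minor point: Lemma~\ref{lem:innermost}(ii) concerns vertices of $I$, not vertices of $T$, so it does not justify ``no chords'' at $x$. You do not need that claim anyway; the fan follows from all interior faces at $x$ being triangles.
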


\begin{proof}
The total charge is $-k_1-2k_2$ by \eqref{eq:charge}, and by (a)--(d) it is at least
$\sum_f b_f(2-6/\ell)-k_1-2k_2$.  Hence every face with a vertex of $T$ is a triangle, every vertex
with one neighbor on $T$ has charge exactly $-1$, and every double-boundary vertex has charge
exactly $-2$, so it has degree four.

Let $x\in T$ and list the neighbors of $x$ in $I$ as $p_1,\dots,p_r$ in rotation from $xy$ to
$xz$; $r\ge1$ by Lemma~\ref{lem:innermost}(iv).  The interior faces at $x$ are triangles, namely
$xyp_1$, $xp_ip_{i+1}$ ($1\le i<r$) and $xp_rz$.  So $p_1$ is the interior common neighbor of
$x,y$, and $p_r$ that of $x,z$; by Lemma~\ref{lem:innermost}(iii) $p_1\ne p_r$, so $r\ge2$, and
$p_1,p_r$ are double-boundary vertices of degree four.  If $r\ge3$, then $p_2$ is adjacent to
$x$ and $p_1$; it is not adjacent to $y$, since $p_1$ is the only interior common neighbor of
$x,y$, and not to $z$, since $p_r\ne p_2$ is the only one of $x,z$.  So $p_2$ has exactly one
neighbor on $T$, its charge is $-1$, and its neighbor $p_1$ has degree four, contrary to (c).
Hence the neighbors of $x$ in $I$ are exactly the two tips at $x$, and they are adjacent.

The same holds at $y$ and at $z$.  As every interior vertex with a neighbor on $T$ is therefore
a tip, $k_1=0$.  Let $A,B,C$ be the interior tips of $xy,yz,zx$; they are distinct by
Lemma~\ref{lem:innermost}(iii), pairwise adjacent by the previous paragraph, and of degree four,
so $N(A)=\{B,C,x,y\}$, $N(B)=\{A,C,y,z\}$ and $N(C)=\{A,B,x,z\}$.  Lemma~\ref{lem:octcap} gives
a contradiction to minimality.
\end{proof}

\section{Single-vertex reductions and maximum degree at least eight}\label{sec:star}

\begin{lemma}\label{lem:star}
Let $v$ be an eligible vertex.  Delete $v$ and, for $u\in N(v)$, put $r_u=|N(v)\cap N(u)|$.  Then
\begin{equation}\label{eq:star}
  |L(vu)|\ge 2D-d(u)-2r_u ,
\end{equation}
and the uncolored edges, the spokes at $v$, form a clique in the conflict graph.
\end{lemma}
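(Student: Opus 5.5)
Since the spokes at $v$ all share the endpoint $v$, they are pairwise incident and therefore form a clique in the conflict graph. So the content of the lemma is the list bound \eqref{eq:star}. We apply Lemma~\ref{lem:lists} with $S=\{v\}$ and bound, for a fixed $u\in N(v)$, the number of colored edges that see $vu$.

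\emph{Incident edges.} The colored edges incident with $vu$ are exactly the $d(u)-1$ edges at $u$ other than $vu$, since every other edge at $v$ is uncolored.

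\emph{Partners.} By Lemma~\ref{lem:partner}, the partners of $vu$ are the edges $tq$ with $t\in N(u)\cap N(v)$ and $q\in(N(u)\cup N(v))\setminus\{u,v,t\}$. An edge $tq$ with $q\in N(v)$ joins two neighbors of $v$ and is colored. We therefore control partners through the link structure given by Lemma~\ref{lem:innermost}(ii). Because $v$ is eligible, it is either interior or, in the absence of a separating triangle, an arbitrary vertex, so its link is a subgraph of its rotation cycle. Two consequences follow.

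\begin{enumerate}
  \item $r_u\le 2$. The common neighbors $t$ of $u$ and $v$ are neighbors of $u$ in the link of $v$, hence among the two rotation-neighbors of $u$.
  \item Each such $t$ contributes few partners, namely edges $tq$ with $q\ne u,v$ and $q$ adjacent to $u$ or to $v$.
\end{enumerate}

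For the second point, the edge $vt$ lies in at most two triangles, by Lemma~\ref{lem:innermost}(i) when $v$ is interior. So $t$ has at most one common neighbor with $v$ besides $u$, which gives at most one partner $tq$ with $q\in N(v)\setminus\{u\}$. The remaining partners have $q\in N(u)\cap N(t)\setminus\{v\}$. The edge $ut$ also lies in at most two triangles, one of which is $uvt$, so there is at most one further such $q$. Altogether each $t$ contributes at most $2$ partners. If the two candidates for $q$ coincide, this only lowers the count.

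Summing the two contributions, $vu$ sees at most $(d(u)-1)+2r_u$ colored edges. Hence
\[
|L(vu)|\ge (2D-1)-(d(u)-1)-2r_u=2D-d(u)-2r_u ,
\]
which is \eqref{eq:star}.

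The main obstacle is justifying the triangle-count facts for every edge used above. This matters when $u$ or $t$ lies on $T$ while $v$ is interior. Lemma~\ref{lem:innermost}(i) still applies there, because every edge involved ($vt$, and $ut$ when $u$ or $t$ is interior) has an endpoint in $I$. The one case needing care is when both $u$ and $t$ are on $T$. Eligibility of $v$ (at most one neighbor on $T$) excludes exactly this case, and that is the reason the hypothesis is needed.
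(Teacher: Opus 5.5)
Your proof is correct and follows essentially the same route as the paper. You count $d(u)-1$ colored incident edges, and for each common neighbor $t$ at most one partner through the second triangle on $vt$ and at most one through the second triangle on $ut$, using eligibility to guarantee that $ut$ has an end in $I$ so that Lemma~\ref{lem:innermost}(i) applies; your extra observation $r_u\le 2$ is true but not needed for \eqref{eq:star}.
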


\begin{proof}
Exactly $d(u)-1$ colored edges at $u$ see $vu$.  Fix $t\in N(v)\cap N(u)$.  A partner $tq$ of
$vu$ with $q\in N(v)$ gives a common neighbor $q\ne u$ of $v$ and $t$, and one with $q\in N(u)$
gives a common neighbor $q\ne v$ of $u$ and $t$.  The edge $vt$ touches $I$, and so does $ut$,
because $v$ has at most one neighbor on $T$; by Lemma~\ref{lem:innermost}(i) each of them lies in
at most two triangles, one of which is $vut$.  So each $t$ contributes at most two colored
partners, which proves \eqref{eq:star}.  Without $T$ the same holds globally.
\end{proof}

\begin{lemma}\label{lem:starcons}
Let $v$ be an eligible vertex of a minimal counterexample.  The link of $v$ has no isolated
vertex and is a subgraph of the rotation cycle at $v$; so it is $C_4$, $P_4$ or $2K_2$ if
$d(v)=4$, and $C_5$, $P_5$ or $P_3+P_2$ if $d(v)=5$.  Moreover the following do not occur:
\begin{enumerate}
  \item $d(v)=4$ and the link of $v$ is $P_4$ or $2K_2$ (any $D\ge6$);
  \item $d(v)=5$ and the link of $v$ is $P_5$ or $P_3+P_2$ (when $D\ge7$);
  \item $d(v)\in\{4,5\}$ (when $D\ge9$);
  \item $d(v)=4$, or $d(v)=5$ and $v$ has a neighbor of degree at most seven (when $D=8$);
  \item $d(v)=4$ and $v$ has a neighbor of degree at most six; or $d(v)=5$ and $v$ has a
        neighbor of degree at most five and another neighbor of degree at most six (when $D=7$).
\end{enumerate}
In particular, for $D=7$ every eligible four-vertex has link $C_4$ and four neighbors of degree
seven, every eligible five-vertex has link $C_5$, and if an eligible five-vertex has a neighbor of
degree at most five, then its other four neighbors have degree seven.
\end{lemma}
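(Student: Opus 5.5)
The first claim follows from Lemma~\ref{lem:innermost}(ii) and Lemma~\ref{lem:triangle}. By Lemma~\ref{lem:innermost}(ii) the link of $v$ is a subgraph of the rotation cycle at $v$. Every spoke $vu$ lies in a triangle, so $u$ has a neighbor in $N(v)$, and the link has no isolated vertex. The subgraphs of $C_4$ without isolated vertices are $C_4$, $P_4$ and $2K_2$. For $C_5$ they are $C_5$, $P_5$ and $P_3+P_2$. No other subgraph of $C_5$ lacks isolated vertices: if two edges were deleted they would have to be nonadjacent, which forces $P_3+P_2$, and deleting three edges isolates a vertex.

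For (i)--(v) the plan is to delete $v$ and extend. Lemma~\ref{lem:star} gives $|L(vu)|\ge 2D-d(u)-2r_u$, where $r_u$ is the link degree of $u$, so $r_u\in\{1,2\}$. The spokes form a clique, so by Lemma~\ref{lem:hall} it is enough to sort the lower bounds $g_1\le\dots\le g_{d(v)}$ and check $g_i\ge i$. Using $d(u)\le D$, a spoke with $r_u=1$ gets at least $D-2$ colors and one with $r_u=2$ gets at least $D-4$. The degree hypotheses refine these bounds.

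\begin{enumerate}
\item[(i)] For $P_4$ the bounds are $D-2,D-4,D-4,D-2$, and for $2K_2$ all four are $D-2$. With $D\ge6$ the sorted sequence is at least $2,2,4,4$, which satisfies $g_i\ge i$.
\item[(ii)] For $D\ge7$, $P_5$ gives $D-2,D-4,D-4,D-4,D-2$, which is at least $3,3,3,5,5$. For $P_3+P_2$ there are fewer $r_u=2$ vertices, so the sequence is even better.
\item[(iii)] For $D\ge9$ every bound is at least $D-4\ge5\ge d(v)$.
\item[(iv)] For $D=8$ and $d(v)=4$, all bounds are at least $4$. For $d(v)=5$ with a neighbor $u$ of degree at most $7$, that spoke has $|L(vu)|\ge 16-7-4=5$. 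The other four have at least $4$, so the sorted sequence is at least $4,4,4,4,5$.
\item[(v)] For $D=7$, $d(v)=4$, and a neighbor of degree at most $6$: that spoke has at least $14-6-4=4$ colors and the others at least $3$, giving at least $3,3,3,4$. For $d(v)=5$ with neighbors of degree at most $5$ and at most $6$, those spokes have at least $5$ and $4$, and the others at least $3$. This gives $3,3,3,4,5$, which satisfies Hall.
\end{enumerate}

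Each case contradicts minimality. The final "in particular" statement combines (i), (ii) and (v) with the list of links.

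The main risk is bookkeeping. One has to confirm that Lemma~\ref{lem:star} really applies with $r_u$ equal to the link degree, and that the sorted bounds are checked against the correct indices in each case. The $D=7$, $d(v)=5$ case is the tightest: it needs three spokes at $3$, one at $4$ and one at $5$ exactly, so I would verify it carefully. This is also why (v) needs two low-degree neighbors rather than one.
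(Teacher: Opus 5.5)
Your proof is correct and follows the paper's argument essentially step for step. You get the link structure from Lemma~\ref{lem:innermost}(ii) and Lemma~\ref{lem:triangle}, identify $r_u$ with the link degree in \eqref{eq:star}, and then check the sorted bounds against Lemma~\ref{lem:hall} in each case. Your enumeration of the admissible subgraphs of $C_4$ and $C_5$ is more explicit than the paper's, and in (v) you use $r_u\le2$ directly rather than first reducing to the links $C_4,C_5$; both differences are immaterial.
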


\begin{proof}
Every edge $vu$ lies in a triangle (Lemma~\ref{lem:triangle}), whose third vertex is a link
neighbor of $u$; the link is a subgraph of the rotation cycle by Lemma~\ref{lem:innermost}(ii).
This gives the list of possible links, and $r_u$ is the degree of $u$ in the link.  In each case
below Lemma~\ref{lem:hall} colors the clique of spokes, because the $i$-th smallest lower bound
from \eqref{eq:star} is at least $i$.

(i) For $P_4$ the bounds dominate $(D-2,D-4,D-4,D-2)$, which is at least $(4,2,2,4)$; for
$2K_2$ all four bounds are at least $D-2\ge4$.  (ii) For $P_5$ the bounds dominate
$(D-2,D-4,D-4,D-4,D-2)\ge(5,3,3,3,5)$; for $P_3+P_2$, where only one link vertex has link degree
two, they dominate $(D-2,D-4,D-2,D-2,D-2)\ge(5,3,5,5,5)$.  (iii) Since $r_u\le2$ and $d(u)\le D$,
every spoke has at least $D-4\ge5$ colors.  (iv) For $D=8$ every spoke has at least four colors,
which handles degree four; at a five-vertex the spoke to a neighbor of degree at most seven has
at least five colors.  (v) By (i) and (ii) only the links $C_4$ and $C_5$ remain.  For $D=7$ and
link $C_4$ the bounds are $10-d(u)\ge3$, and a neighbor of
degree at most six gives a fourth bound of at least four.  For link $C_5$ the bounds are
$10-d(u)$, and the two named neighbors give bounds of at least five and four, so the bounds
dominate $(3,3,3,4,5)$.  The final assertions combine (i), (ii) and (v).
\end{proof}

\begin{theorem}\label{thm:D9}
Theorem~\ref{thm:main} holds for every $D\ge9$.
\end{theorem}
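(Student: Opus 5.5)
We argue by contradiction: let $D\ge9$ and let $G$ be a minimal counterexample, which has the properties of Corollary~\ref{cor:structure}. By Lemma~\ref{lem:starcons}(iii), no eligible vertex has degree four or five. Since $d(v)\ge4$ for every vertex, every eligible vertex has degree at least six and hence nonnegative charge $d(v)-6$. We apply only the face rule of Lemma~\ref{lem:facerule} and then derive a contradiction from the charge counts, splitting according to whether $G$ has a separating triangle.

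\emph{No separating triangle.} Every vertex is eligible, so every vertex has degree at least six. After the face rule every face retains zero and every vertex keeps charge at least $d(v)-6\ge0$. The total charge would therefore be nonnegative, contradicting the global sum $-12$ of Lemma~\ref{lem:charge}. Alternatively, minimum degree six already contradicts planarity.

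\emph{A separating triangle exists.} Take $T$ and $I$ as in Lemma~\ref{lem:innermost} and verify hypotheses (a)--(d) of Lemma~\ref{lem:equality} after the face rule.
\begin{enumerate}[label=\textup{(\alph*)}]
  \item Each interior face retains exactly $b_f(2-6/\ell)$ by Lemma~\ref{lem:facerule}.
  \item An interior vertex with no neighbor on $T$ is eligible, so it has degree at least six and nonnegative charge.
  \item An interior vertex with one neighbor on $T$ is also eligible. It therefore has degree at least six and charge at least $0>-1$, so the clause about charge exactly $-1$ is vacuous.
  \item A double-boundary vertex has degree at least four, so its charge is at least $-2$. If the charge equals $-2$, then it has degree four and received nothing from the face rule, so no incident interior face has length at least four. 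All its incident faces are interior, since the vertex is in $I$, so all of them are triangles.
\end{enumerate}
Lemma~\ref{lem:equality} then says $G$ is not a minimal counterexample, a contradiction.

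The one point needing care is (d), where $-2$ must force all incident faces to be triangles. This holds because every face incident to a vertex of $I$ is an interior face, and any such face of length $\ell\ge4$ sends the vertex $2-6/\ell>0$. No other obstacle is expected: the substance lies in Lemmas~\ref{lem:starcons}, \ref{lem:equality} and~\ref{lem:octcap}.
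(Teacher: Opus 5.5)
Your proof is correct and follows the paper's argument essentially step for step. Lemma~\ref{lem:starcons}(iii) forces every eligible vertex to have degree at least six; without a separating triangle this contradicts the total $-12$, and otherwise the face rule gives the hypotheses of Lemma~\ref{lem:equality}, including the equality clause for double-boundary vertices (the paper uses the initial charges in the first case rather than applying the face rule, which makes no difference).
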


\begin{proof}
Suppose a minimal counterexample exists.  By Lemma~\ref{lem:starcons}(iii) every eligible vertex
has degree at least six.  Without a separating triangle all vertex charges $d(v)-6$ and all face
charges $2|f|-6$ are nonnegative, contradicting the total $-12$.  Otherwise apply the face rule
(Lemma~\ref{lem:facerule}).  Faces retain $b_f(2-6/\ell)$, eligible vertices are nonnegative, and a
double-boundary vertex has charge at least $d(v)-6\ge-2$, with equality only if $d(v)=4$ and it
receives nothing, that is, all its incident faces are triangles.  Lemma~\ref{lem:equality}
gives a contradiction.
\end{proof}

\begin{theorem}\label{thm:D8}
Theorem~\ref{thm:main} holds for $D=8$.
\end{theorem}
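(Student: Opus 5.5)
The plan is to follow the pattern of Theorem~\ref{thm:D9}, adding the discharging rule announced in the overview: every vertex of degree eight sends $\tfrac14$ to each adjacent vertex of degree five. We take a minimal counterexample $G$ with $D=8$. By Lemma~\ref{lem:starcons}(iv), every eligible vertex has degree at least five. Moreover, every eligible five-vertex has all five neighbors of degree eight. It therefore receives $5\cdot\tfrac14$ and ends with charge at least $-1+\tfrac54>0$.

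The first step is to check that the senders stay nonnegative. An interior vertex of degree eight has initial charge $2$. It gives away at most $8\cdot\tfrac14=2$, so it keeps at least $0$. Vertices of degree six and seven, and all faces, neither send nor receive under the new rule. After also applying the face rule of Lemma~\ref{lem:facerule}, every interior vertex is at least its previous value minus what it sends. Faces retain $b_f(2-6/\ell)$, as before.

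Without a separating triangle, every vertex is eligible. All vertices then have nonnegative final charge, and so do all faces. This contradicts the total of $-12$ from Lemma~\ref{lem:charge}. The rule only moves charge between vertices, so the total is preserved.

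With a separating triangle, we verify the hypotheses (a)--(d) of Lemma~\ref{lem:equality} and conclude. To keep charge movement interior-only, as that lemma requires, the rule is applied only between interior vertices. Items (a) and (b) hold by the argument above. Item (c) says that an interior vertex with one neighbor on $T$ is eligible, so it is nonnegative. The point needing care is that such a five-vertex may have a neighbor of degree eight on $T$, which does not send charge. In that case the five-vertex receives only from its four interior neighbors, giving at least $-1+1=0$. This is still fine, since all its neighbors have degree eight by Lemma~\ref{lem:starcons}(iv).

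The main obstacle is item (d), which concerns double-boundary vertices. They are not eligible, so Lemma~\ref{lem:starcons} says nothing about them. A double-boundary vertex has charge at least $d(v)-6\ge-2$, since it only receives charge. Equality forces $d(v)=4$, with all incident faces triangles and nothing received, which is exactly what (d) requires. One must also confirm that no degree-eight vertex is pushed negative by sending to a double-boundary five-vertex. This holds because the bound of $2$ on what it sends counts every neighbor. With (a)--(d) established, Lemma~\ref{lem:equality} yields an octahedral cap, and Lemma~\ref{lem:octcap} contradicts minimality.
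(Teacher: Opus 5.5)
Your proposal is correct and follows the paper's proof step for step: the same rule ($\tfrac14$ from interior eight-vertices to adjacent interior five-vertices), the same use of Lemma~\ref{lem:starcons}(iv), the same treatment of a five-vertex with a neighbor on $T$, and the same appeal to Lemma~\ref{lem:equality}. One small slip: a double-boundary vertex of degree eight can send charge, so ``it only receives charge'' is not literally true, but your earlier observation that every interior eight-vertex stays nonnegative already gives what item (d) needs.
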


\begin{proof}
Suppose a minimal counterexample exists.  By Lemma~\ref{lem:starcons}(iv) there is no eligible
four-vertex and every eligible five-vertex has five neighbors of degree eight.  Apply the face
rule, and let every interior eight-vertex send $\tfrac14$ to each adjacent interior five-vertex
(globally: every eight-vertex to every adjacent five-vertex).  An eight-vertex sends at most
$2=8-6$ and stays nonnegative.  An eligible five-vertex has at least four interior neighbors,
all of degree eight, so it receives at least $1$ and is nonnegative.  Six- and seven-vertices are
nonnegative.  A double-boundary vertex of degree four or five only receives, and one of degree at
least six stays nonnegative; so it has charge at least $-2$, with equality only for degree four
and triangular incident faces.  Faces retain $b_f(2-6/\ell)$.  Without a separating triangle every
vertex and face is nonnegative, contradicting $-12$; otherwise Lemma~\ref{lem:equality} gives a
contradiction.
\end{proof}

\section{Maximum degree seven}\label{sec:d7}

In this section $D=7$, so the palette has thirteen colors.  We show that three configurations
are reducible and then add one discharging rule.

\begin{lemma}\label{lem:H4}
Let $v$ be an interior four-vertex with link $u_0u_1u_2u_3u_0$ (indices mod $4$) and no
neighbor on $T$.  Take a $D$-coloring of $G-v$ and uncolor the four rim edges $e_i=u_iu_{i+1}$.
The resulting partial coloring extends to $G$.
\end{lemma}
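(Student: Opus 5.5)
The plan is to bound the lists of the eight uncolored edges, determine their conflict graph exactly, and then color that graph by hand. Write $s_i=vu_i$ for the spokes and $e_i=u_iu_{i+1}$ for the rim edges. Every $u_i$ is interior, because $v$ has no neighbor on $T$. So every edge at $v$ or at a $u_i$ touches $I$, and by Lemma~\ref{lem:innermost}(i) it lies in at most two triangles. Let $w_i$ denote the third vertex of the second triangle on $e_i$, if that triangle exists.

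\emph{List sizes.} For $s_i$, there are $d(u_i)-3\le4$ colored edges at $u_i$, since $s_i,e_{i-1},e_i$ are uncolored. By Lemma~\ref{lem:partner}, the partners of $s_i$ come from $t\in\{u_{i-1},u_{i+1}\}$. Through $t=u_{i-1}$, the only colored candidate is $u_{i-1}w_{i-1}$: the other edges $u_{i-1}q$ with $q\in N(v)$ are rim edges, and the link has no chord. The same holds for $u_{i+1}$. Hence $|L(s_i)|\ge13-6=7$. For $e_i$, there are $(d(u_i)-3)+(d(u_{i+1})-3)\le8$ colored incident edges. Partners through $t=v$ are all uncolored. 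Through $t=w_i$, a partner $w_iq$ lies in a second triangle on $u_iw_i$ or on $u_{i+1}w_i$, which gives at most two. So $|L(e_i)|\ge3$.

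\emph{Conflict graph.} The spokes form a clique. Each $e_i$ is incident with $s_i$ and $s_{i+1}$. It conflicts with $s_{i+2}$ and $s_{i+3}$ through three cross edges (for $s_{i+2}$ these are $vu_i,vu_{i+1},u_{i+1}u_{i+2}$). So every rim edge sees every spoke. Consecutive rim edges meet. The opposite pairs $e_0,e_2$ and $e_1,e_3$ have only two cross edges, because $u_0u_2$ and $u_1u_3$ are absent, so they do not conflict. The conflict graph is therefore $K_4$ joined to $C_4$, with lists of size at least $7$ on the $K_4$ and at least $3$ on the $C_4$.

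\emph{Coloring.} The spokes will be colored last, and they remain colorable by Lemma~\ref{lem:hall} as soon as the rim uses at most three distinct colors: each spoke then keeps at least $4$ colors, and a $4$-clique with all lists of size at least $4$ is colorable. Suppose $L(e_0)\cap L(e_2)\ne\emptyset$. Give both edges a common color. Then $e_1$ and $e_3$ each keep at least $2$ colors and do not conflict, so they are colored independently, and the rim uses at most three colors. The case $L(e_1)\cap L(e_3)\ne\emptyset$ is symmetric.

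The main obstacle is the remaining case, where both opposite pairs have disjoint lists. Then the rim necessarily uses four distinct colors, and the spokes may keep only three each. My first attempt there is to exploit colors outside the spoke lists. If some rim list contains a color missing from some $L(s_j)$, using it costs $s_j$ nothing, and a Hall-type count on the four spoke lists after the rim is colored should finish. In the extreme case where all these lists are nested, I would instead apply Lemma~\ref{lem:cn} to $K_4\vee C_4$, which has $26$ edges. I would look for a nonzero coefficient of a monomial with exponents at most $6$ on the spokes and at most $2$ on the rim edges, for example $(6,6,6,5;\,1,1,1,0)$. I would check this coefficient by expansion or by computer, consistent with the paper's computer-assisted style. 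If that fails, I would sharpen the list bounds using the degrees of the $u_i$: the bounds are tight only when all $d(u_i)=7$ and every $w_i$-partner exists. In that tight configuration I would recolor a partner edge, as in the Kempe-type swap in Lemma~\ref{lem:deg3}, to create a common color for one opposite pair.
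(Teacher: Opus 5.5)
Your setup is correct: the spoke bound $7$, the rim bound $3$, the conflict graph $K_4\vee C_4$, and the case where two opposite rim lists meet. The gap is the remaining case, where both opposite pairs of rim lists are disjoint, and your fallbacks there do not work.

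\textbf{The coefficient approach must fail.} A nonzero coefficient with exponents at most $6$ on the spokes and at most $2$ on the rim edges would make $K_4\vee C_4$ colorable from \emph{all} lists of sizes $(7,7,7,7,3,3,3,3)$. That is false. Give every spoke $\{1,\dots,7\}$ and the rim edges, in cyclic order, $\{1,2,3\},\{1,2,3\},\{4,5,6\},\{4,5,6\}$. The rim is then forced onto four distinct colors of $\{1,\dots,6\}$, leaving three colors for the four mutually conflicting spokes. So every such coefficient is zero, including that of $(6,6,6,5;1,1,1,0)$.

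This example is exactly your ``nested'' case (every rim list inside every spoke list). Hence no argument using list sizes alone can finish that case. Your first heuristic is sound: in the non-nested case, give a rim edge a color outside some $L(s_j)$; then $s_j$ keeps four colors and Hall's condition holds. It does not reach the nested case, though. The Kempe-type swap is left unspecified, and nothing here supplies the separated region that made the swap in Lemma~\ref{lem:deg3} safe.

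\textbf{The missing idea.} All eight lists come from one coloring of $G-v$, and every colored edge at $u_i$ sees both the spoke $s_i$ and the rim edges $e_{i-1},e_i$.
\begin{enumerate}
\item Color the rim greedily. Since consecutive rim edges are incident and opposite lists are disjoint, it uses a set $R$ of four distinct colors.
\item Each spoke keeps at least three colors. Hall's condition for the spoke clique fails only if all four spokes keep the same three-set $B$.
\item In that case each $L(s_i)$, of size at least $7$, lies in the seven-set $A=B\cup R$, hence equals $A$.
\item Every colored edge at $u_i$ sees $s_i$, so its color lies outside $A$. Thus the colored incident edges of a rim edge remove no color of $A$ from its list.
\item The at most two colored partners of a rim edge remove at most two colors of $A$. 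So every rim list contains at least five colors of $A$.
\item Two five-subsets of a seven-set intersect, which contradicts the disjointness of opposite rim lists. So Hall's condition holds and the spokes can be colored.
\end{enumerate}
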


\begin{proof}
The uncolored edges are the spokes $vu_i$ and the rim edges $e_i$.  The spokes form a $K_4$.
Each rim edge meets two spokes and, by Lemma~\ref{lem:partner}, is a partner of the other two
($e_i$ and $vu_{i+2}$ have the cross edges $u_iv,u_{i+1}v,u_{i+1}u_{i+2}$).  Consecutive rim edges
are incident, while opposite rim edges do not conflict, since $u_0u_2,u_1u_3$ are not edges.
Since a diamond containing two disjoint edges has exactly their four endpoints, these are all
the conflicts: the conflict graph is $K_4\vee C_4$.

All $u_i$ are interior, so every edge $u_iu_{i+1}$ and every edge from some $u_i$ lies in at most
two triangles.  A spoke $vu_i$ sees at most $7-3=4$ colored edges at $u_i$, and for each
$t\in\{u_{i-1},u_{i+1}\}$ at most one colored partner (the edge from $t$ to the second common
neighbor of $u_it$; the partner through $v$ is an uncolored rim edge).  So spoke lists have size
at least $13-4-2=7$.  A rim edge $e_i$ sees at most $4+4$ colored edges at $u_i,u_{i+1}$; its
common neighbors are $v$, which contributes only uncolored edges, and at most one further vertex
$w$, which contributes at most two partners (from the sides $u_iw$ and $u_{i+1}w$).  So rim lists
have size at least $13-8-2=3$.  We now work with the actual lists $L$.

If two opposite rim edges have a common color $\gamma$ in their lists, give both color $\gamma$.
The other two rim edges are opposite, hence nonconflicting, and each has at least two colors
other than $\gamma$; color them.  The rim uses at most three colors, so each spoke keeps at least
four colors and the spokes are colored greedily.

Otherwise the lists of both opposite pairs are disjoint.  Color the rim greedily from its lists;
the four colors are distinct, since consecutive rim edges are incident and opposite ones have
disjoint lists.  Each spoke keeps at least three colors, and Hall's condition for the four spokes
can fail only if they all keep the same three-set $B$.  Then every spoke list, having at least
seven colors, equals the seven-set $A$ consisting of $B$ and the four rim colors.  Every colored
edge at $u_i$ sees $vu_i$, so its color is not in $A$.  Hence the colored edges at the ends of a
rim edge remove no color of $A$ from its list, and its at most two colored partners remove at
most two: each rim list contains at least five colors of $A$.  Two five-subsets of a seven-set
intersect, contradicting the disjointness of opposite rim lists.
\end{proof}

\begin{remark}
The list sizes $(7,7,7,7,3,3,3,3)$ alone do not suffice.  Give every spoke the list
$\{1,\dots,7\}$ and the rim edges, in cyclic order, the lists $\{1,2,3\},\{1,2,3\},\{4,5,6\},
\{4,5,6\}$; then eight distinct colors would be needed from seven.  The proof uses that all
eight lists come from one coloring of $G-v$.
\end{remark}

\begin{lemma}\label{lem:H5}
Let $v$ be an eligible five-vertex with link $C_5=u_0u_1u_2u_3u_4u_0$, and suppose that $u_0$ and
$u_1$ are interior and have degree at most six.  Take a $D$-coloring of $G-v$ and uncolor
$e=u_0u_1$.  The resulting partial coloring extends to $G$.
\end{lemma}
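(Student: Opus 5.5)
The plan is to treat the six uncolored edges, namely the five spokes $vu_i$ and the rim edge $e=u_0u_1$, as a small list-coloring problem. First I compute their conflict graph, then lower bounds on their lists via Lemma~\ref{lem:lists}, and finish with a short Hall-type case analysis (Lemma~\ref{lem:hall}).

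\emph{Conflict graph.} The spokes form a $K_5$ (Lemma~\ref{lem:star}). The edge $e$ is incident with $vu_0$ and $vu_1$. By Lemma~\ref{lem:partner} it is a partner of $vu_2$, via the cross edges $u_0v,u_1v,u_1u_2$, and symmetrically of $vu_4$. It does not conflict with $vu_3$: among $u_0v,u_1v,u_0u_3,u_1u_3$ only the first two exist, because the link of $v$ is the chordless $C_5$. So the conflict graph is $K_5$ together with a vertex $e$ adjacent to every spoke except $vu_3$.

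\emph{List sizes.} I use that $v,u_0,u_1$ are interior and $v$ is eligible, so every relevant edge lies in at most two triangles (Lemma~\ref{lem:innermost}(i)). Each spoke starts from the bound \eqref{eq:star}, $|L(vu)|\ge 10-d(u)\ge3$, and I then credit the partners and incident edges that are now uncolored.
\begin{itemize}
\item For $vu_0$: we have $d(u_0)\le6$, and $e$ is an uncolored incident edge, so at most $4$ colored edges at $u_0$ see it. At most $4$ colored partners see it, two through each of $u_1,u_4$. This gives $|L(vu_0)|\ge5$, and likewise $|L(vu_1)|\ge5$.
\item For $vu_2$: the partner $u_1u_0$ through $t=u_1$ is uncolored, so $|L(vu_2)|\ge4$, and likewise $|L(vu_4)|\ge4$.
\item For $vu_3$: $|L(vu_3)|\ge3$.
\item For $e$: at most $4+4$ colored edges at $u_0,u_1$ see it. The common neighbor $v$ contributes only uncolored partners, and the second common neighbor $w$ contributes at most two partners, one from the other triangle on each of $u_0w,u_1w$. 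So $|L(e)|\ge 13-8-2=3$.
\end{itemize}

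\emph{Coloring.} If $L(e)\cap L(vu_3)$ contains a color $\gamma$, I give $\gamma$ to both; they do not conflict. The remaining spokes then have lists of sizes at least $3,3,4,4$, which is a clique colorable by Lemma~\ref{lem:hall}.

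So assume $L(e)\cap L(vu_3)=\emptyset$. I color $e$ with some $c$ and then need the spoke clique, with sizes at least $3,3,3,4,4$, to satisfy Hall's condition. Every prefix condition is automatic except the full one, which requires the union of the five residual lists to have at least $5$ colors.
\begin{itemize}
\item If some $c\in L(e)\setminus L(vu_0)$ exists, choosing it leaves $vu_0$ with $5$ colors. The sizes then dominate $(3,3,3,4,5)$ and Hall holds. The case $L(e)\not\subseteq L(vu_1)$ is symmetric.
\item Otherwise $L(e)\subseteq L(vu_0)\cap L(vu_1)$. If Hall fails after coloring $e$ with $c$, the residual union has exactly $4$ colors. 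Hence $|L(vu_0)|=5$ and every other residual spoke list lies in $S=L(vu_0)\setminus\{c\}$; in particular $L(vu_3)\subseteq S$ with $|L(vu_3)|\ge3$. Then $L(e)\subseteq S\cup\{c\}$ is disjoint from $L(vu_3)$, which forces $|L(e)|\le 5-3=2$, a contradiction.
\end{itemize}

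The step I expect to need the most care is the list-size bookkeeping: checking exactly which partners of $vu_2,vu_4$ and of $e$ are uncolored or absent, and that $e$ genuinely has at most two colored partners through $w$. The final Hall argument is short once those bounds are in place.
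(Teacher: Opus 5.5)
Your proof is correct and follows the paper's argument. You use the same conflict graph ($K_5$ plus $e$ joined to every spoke except $vu_3$), the same list bounds $(5,5,4,3,4,3)$, and the same split on whether $L(e)$ and $L(vu_3)$ meet. The only difference is the disjoint case: the paper checks Hall's condition directly on all six lists (disjointness makes distinct representatives a proper coloring), whereas you color $e$ first and then verify Hall for the residual spoke clique with your two-case argument, which is also valid.
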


\begin{proof}
The five spokes form a $K_5$.  The edge $e$ meets $vu_0,vu_1$, is a partner of $vu_2$ and $vu_4$,
and does not conflict with $vu_3$, because $u_3$ is adjacent to neither $u_0$ nor $u_1$.  So the
conflict graph is $K_4\vee 2K_1$ with universal vertices $vu_0,vu_1,vu_2,vu_4$ and the
nonadjacent pair $vu_3,e$.  Before $e$ is uncolored, the proof of Lemma~\ref{lem:star} bounds the
colored edges seeing $vu_i$ by $d(u_i)-1$ incident edges and four partners, giving
$|L(vu_i)|\ge 10-d(u_i)$; and $e$ is one of these counted edges for $i\in\{0,1,2,4\}$ (an
incident edge for $i=0,1$, and the partner through $u_1$, respectively $u_0$, for $i=2,4$).  So
after uncoloring $e$ these four bounds increase by one.  The edge $e$
sees at most $4+4$ colored edges at $u_0,u_1$; its common neighbors are $v$ and at most one further
vertex $w$, which contributes at most two partners because $u_0w$ and $u_1w$ touch $I$.  Hence, in
the order $(vu_0,vu_1,vu_2,vu_3,vu_4,e)$, the lists have sizes at least $(5,5,4,3,4,3)$.

If the lists of $vu_3$ and $e$ share a color, give both that color; the four universal spokes keep
at least $(4,4,3,3)$ colors and are colored greedily.  If the two lists are disjoint, we check
Hall's condition for all six lists: a set containing both $vu_3$ and $e$ has a union of size at
least $6$; a set of at most four elements containing a universal spoke has a union of size at
least $4$; a five-element set with exactly one of $vu_3,e$ contains $vu_0$, whose list has five
colors; and the remaining sets are trivial.  Distinct representatives give a valid coloring.
\end{proof}

\begin{lemma}\label{lem:H55}
No two eligible five-vertices with full $C_5$ links are adjacent.
\end{lemma}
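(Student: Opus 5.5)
We work with $D=7$, so the palette has $13$ colors. Suppose $v$ and $w$ are adjacent eligible five-vertices, both with link $C_5$. We delete both $v$ and $w$ and fix a $D$-coloring of $G-\{v,w\}$, which is a valid partial coloring by Lemma~\ref{lem:safe}. The task is then to extend it to the nine uncolored edges: the four spokes at $v$ other than $vw$, the four spokes at $w$ other than $vw$, and the edge $vw$ itself.

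\emph{Local structure.} Since the links are $C_5$ and $vw$ lies in at most two triangles (Lemma~\ref{lem:innermost}(i)), $v$ and $w$ have exactly two common neighbors $a,b$. These are the two link neighbors of $w$ in the link of $v$. Write
\[
N(v)=\{w,a,b,v_1,v_2\}\quad\text{with rotation } w,a,v_1,v_2,b,
\]
\[
N(w)=\{v,a,b,w_1,w_2\}\quad\text{with rotation } v,a,w_1,w_2,b .
\]
By planarity and the chordless links, the vertices $v_i$ and $w_j$ are distinct.

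\emph{Conflict graph.} I would determine it using Lemma~\ref{lem:partner}. The spokes at $v$ form a $K_5$ containing $vw$, and likewise at $w$. A spoke $vx$ with $x\ne w$ and a spoke $wy$ with $y\ne v$ conflict exactly when they are incident ($x=y\in\{a,b\}$) or when three cross edges are present. For example, $va$ and $wb$ conflict through the cross edges $vw,vb,wa$, whereas $vv_1$ and $ww_1$ do not conflict.

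\emph{List sizes.} I would count lists as in Lemma~\ref{lem:star}, but with both vertices deleted. A spoke $vu$ with $u\ne w$ sees at most $d(u)-1$ colored edges at $u$, or $d(u)-2$ if $u\in\{a,b\}$, together with at most two colored partners for each common neighbor of $v$ and $u$. Partners through $w$ are uncolored, which gives extra slack for $va$ and $vb$. This yields roughly $|L(vu)|\ge 10-d(u)\ge 3$ for $u\in\{v_1,v_2\}$, and larger bounds for $va,vb$. The edge $vw$ sees no colored incident edges; its only colored partners are edges from $a$ and $b$ to the common neighbors of $va$, $wa$, $vb$, $wb$, namely $av_1,aw_1,bv_2,bw_2$. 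So $|L(vw)|\ge 9$.

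\emph{Coloring.} The main obstacle is that the conflict graph on nine edges is dense, while the lists at $vv_i$ and $ww_j$ may be as small as $3$. Here I would exploit the nonconflicting pairs $\{vv_i,ww_j\}$. The first attempt is to follow the trick of Lemma~\ref{lem:H5}: if $L(vv_1)$ and $L(ww_1)$ (or some other nonconflicting pair) share a color, give both that color. Then color the rest greedily in the order small lists first, $vw$ last; this is where the large list of $vw$ pays off. If all such lists are pairwise disjoint, the union of small lists is large, and I would verify Hall-type conditions, or better, run the Combinatorial Nullstellensatz (Lemma~\ref{lem:cn}) on this fixed nine-vertex conflict graph with exponents $t_i=|L_i|-1$, reduced until they sum to the edge count. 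If the bare list sizes prove insufficient, I would sharpen the bound at $vv_i$ using Lemma~\ref{lem:starcons}(v): if $v$ had a neighbor of degree at most five, the others would have degree seven. This lets us split into cases according to the degrees of $a,b,v_i,w_j$, each case giving larger lists somewhere.
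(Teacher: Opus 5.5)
Your setup matches the paper's: delete both vertices, color the nine edges, and first try to give one color to a nonconflicting pair $vv_i,ww_j$. The gap is the case where those lists are disjoint. You leave it open, and it does not close by Hall's condition at the list sizes you derived.

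Your bound $|L(vv_1)|\ge 10-d(v_1)\ge 3$ is one too small. The common neighbors of $v$ and $v_1$ are $a$ and $v_2$.
\begin{enumerate}
\item Through $a$, the partner $aw$ (note $w\in N(v)$) is uncolored because $w$ was deleted. So $a$ contributes at most one colored partner, through the side $av_1$.
\item Through $v_2$ there are at most two: $v_2b$ and one through $v_1v_2$.
\end{enumerate}
Hence $|L(vv_1)|\ge 13-6-3=4$, and likewise for $vv_2,ww_1,ww_2$. You applied the slack coming from the deleted $w$ only to $va,vb$.

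With private lists of size $3$, Hall can fail for the eight arms even when the two sides have disjoint lists. Let $vv_1,vv_2$ have list $\{1,2,3\}$, let $ww_1,ww_2$ have $\{4,5,6\}$, and let the four shared arms have $\{1,\dots,6\}$; then any seven arms have only six colors. The Nullstellensatz computation you suggest is not carried out. Lemma~\ref{lem:starcons}(v) cannot repair the count either, because the critical case is $d(v_i)=d(w_j)=7$.

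With the correct bound $4$, the disjoint case is exactly the paper's Hall check.
\begin{enumerate}
\item At most four arms have a union of at least $4$ colors.
\item Five or six arms contain a shared arm with at least $6$ colors.
\item Seven or eight arms contain a private arm from each side, and their disjoint lists give at least $8$ colors.
\end{enumerate}
Then $vw$ is colored last, from at least $9$ colors against at most $8$ used.

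For the intersecting case you also need the precise structure: each private arm misses exactly one shared arm, namely $vv_1\not\sim wb$, $vv_2\not\sim wa$, $ww_1\not\sim vb$ and $ww_2\not\sim va$. So the two shared arms that see both $\alpha$-colored arms keep at least $4$ colors, and the shared $K_4$ has residual lists of sizes at least $(3,3,4,4)$.

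Alternatively, size $3$ can be made to work, but only with an extra argument. After coloring the private arms, the shared $K_4$ can fail only if all four residual lists equal one $3$-set $B$. This forces the color $\beta_1$ of $ww_1$ to lie in $L(va)\setminus L(vb)$. Recoloring $ww_1$ with a color of its list other than $\beta_1$ and the color of $ww_2$ then makes the residual lists of $va$ and $vb$ differ. Some such argument has to be supplied.
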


\begin{proof}
Suppose $a,b$ are such vertices, with cyclic links $b,x,p,q,y$ at $a$ and $a,x,r,s,y$ at $b$.  The
edge $ab$ has exactly the common neighbors $x,y$, so the eight named vertices are distinct.
Delete $a$ and $b$; in the order
\[
  U=(ab,ax,ay,bx,by,ap,aq,br,bs)
\]
the numbers of colored incident edges and of colored partners are at most
\[
  (0,5,5,5,5,6,6,6,6)\qquad\text{and}\qquad(4,2,2,2,2,3,3,3,3),
\]
respectively, so the lists have sizes at least $(9,6,6,6,6,4,4,4,4)$.  Indeed, the colored partners of $ab$
are $xp,xr,yq,ys$.  For $ax$, the common neighbors are $b$ (only uncolored edges) and $p$, which
contributes $pq$ and at most one further partner through the side $xp$; and $x$ has at most five
colored edges.  The edges $ay,bx,by$ are analogous with $(pq,yq)$, $(rs,xr)$, $(rs,ys)$.  For $ap$,
the common neighbors are $x$ and $q$: $x$ contributes $xb$ (uncolored) and at most one edge
through the side $xp$, and $q$ contributes $qy$ and at most one edge through the side $pq$; the
same holds for $aq,br,bs$.  Every side used here joins two neighbors of an eligible vertex, so it
touches $I$ and lies in at most two triangles.

The edge $ab$ meets every other edge of $U$.  The shared arms $ax,ay,bx,by$ form a $K_4$, the
private arms form the two incident pairs $\{ap,aq\}$ and $\{br,bs\}$, and no arm of one pair
conflicts with an arm of the other (their only possible cross edges are $ab$ and one edge between
private vertices).  Each private arm misses exactly one shared arm:
\[
  ap\not\sim by,\qquad aq\not\sim bx,\qquad br\not\sim ay,\qquad bs\not\sim ax,
\]
and sees the other three.

Suppose some list of $\{ap,aq\}$ meets some list of $\{br,bs\}$, and give these two arms a common
color $\alpha$.  Color the other two private arms avoiding $\alpha$; they do not conflict and
each has at least three choices.  Each shared arm sees at most three private colors, and the two
shared arms that see both $\alpha$-colored arms see at most two.  So the shared $K_4$ keeps lists of
sizes at least $(3,3,4,4)$ and is colored greedily.  Otherwise all lists of $\{ap,aq\}$ are disjoint
from all lists of $\{br,bs\}$, and Hall's condition holds for the eight arms: at most four arms have
a union of at least four colors; five or six arms include a shared arm with six colors; seven or
eight arms include a private arm from each pair, with disjoint lists of at least four colors each.
In both cases color $ab$ last; it sees at most eight colors and has at least nine.
\end{proof}

\begin{theorem}\label{thm:D7}
Theorem~\ref{thm:main} holds for $D=7$.
\end{theorem}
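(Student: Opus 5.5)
We argue as in Theorems~\ref{thm:D9} and~\ref{thm:D8}.  Suppose a minimal counterexample exists, apply the face rule (Lemma~\ref{lem:facerule}), and add the rule announced in the overview: every interior seven-vertex sends $\tfrac13$ to each adjacent interior vertex of degree four or five.  Without a separating triangle all rules are applied globally.  We then show that every vertex and face is nonnegative, contradicting the total $-12$.  With a separating triangle we instead verify hypotheses (a)--(d) of Lemma~\ref{lem:equality}.  Faces retain $b_f(2-6/\ell)$, which gives (a).  A seven-vertex sends at most $7\cdot\tfrac13<1$ and stays nonnegative, and six-vertices are untouched.

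\emph{Eligible four-vertices.}  By Lemma~\ref{lem:starcons}, an eligible four-vertex $v$ has link $C_4$ and four neighbors of degree seven.  Suppose first that $v$ has no neighbor on $T$.  All four neighbors are interior, so $v$ receives $\tfrac43$ and ends with $\tfrac13\ge0$.  Note that Lemma~\ref{lem:H4} is then not even needed for the charge count.  Its role must be to exclude the situation where $v$ has no neighbor on $T$ but the link edges touch $T$; we will check where it is used when the counts fail.  Now suppose $v$ has exactly one neighbor on $T$.  It receives $\tfrac33=1$ and ends at $-1$.  It has no neighbor of degree four, because all its interior neighbors have degree seven.  So (c) holds.

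\emph{Eligible five-vertices.}  A five-vertex $v$ has link $C_5$ by Lemma~\ref{lem:starcons}(ii).  If it has at least three interior neighbors of degree seven, it receives at least $1$ and is fine.  Otherwise, since $D=7$ and neighbors have degree at least four, $v$ has at least two neighbors of degree at most six, or a neighbor on $T$.  By Lemma~\ref{lem:starcons}(v), a neighbor of degree at most five forces the other four neighbors to have degree seven.  Then $v$ receives at least $\tfrac33$ if one of those four lies on $T$, and at least $\tfrac43$ otherwise.

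The remaining subcase is that $v$ has two or more neighbors of degree exactly six and no neighbor of degree at most five.  Here we use the link cycle.
\begin{itemize}
\item If two of these six-neighbors are consecutive in the link and interior, Lemma~\ref{lem:H5} reduces the configuration.
\item Otherwise the six-neighbors are pairwise nonconsecutive in the $C_5$, so there are at most two of them.  The other three neighbors then have degree seven, and $v$ receives $1$ when they are all interior.
\item If one of $v$'s neighbors lies on $T$, the charge may drop to $-\tfrac13$ or $-\tfrac23$.  We must still have $\ge-1$ and no four-vertex neighbor, and both hold: a four-vertex neighbor would have degree at most five, which is excluded in this subcase.
\end{itemize}
Lemma~\ref{lem:H55} will be needed for eligible five-vertices adjacent to other five-vertices.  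By Lemma~\ref{lem:starcons}(v), such a vertex has its four other neighbors of degree seven, so the case is already handled.  I therefore expect Lemma~\ref{lem:H55} to be used to block a five-vertex adjacent to a double-boundary vertex or a boundary-related degree deficiency.

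\emph{Double-boundary vertices.}  Such a vertex only receives charge if it has degree four or five, and stays nonnegative otherwise.  So its charge is at least $-2$, with equality only for degree four with all incident faces triangular.  This gives (d).

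The main obstacle is the bookkeeping when an eligible vertex has a neighbor on $T$ and the losses from seven-vertex rules concentrate.  The worst case is a four-vertex with one neighbor on $T$ and a four-vertex neighbor, which would violate (c).  Lemma~\ref{lem:starcons} excludes degree-four interior neighbors here, but I would double-check this against the requirement in (c).  Lemma~\ref{lem:equality} then yields the final contradiction.
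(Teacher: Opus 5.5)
Your overall scheme (the face rule plus a $\tfrac13$ rule from seven-vertices, closed off by the total $-12$ or by Lemma~\ref{lem:equality}) is the paper's. However, two arithmetic slips hide exactly the two places where Lemmas~\ref{lem:H55} and~\ref{lem:H4} are needed, and your guesses about their roles are wrong.

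First, the donor bound fails as written: $7\cdot\tfrac13=\tfrac73>1$, so a seven-vertex (charge $1$) cannot feed all its neighbors. Without a bound on its recipients, an interior seven-vertex with no neighbor on $T$ can go negative, violating (b) and the global count. The paper sends $\tfrac13$ only to \emph{eligible} four- and five-vertices, and the argument runs as follows.
\begin{enumerate}
  \item Such a recipient has link $C_4$ or $C_5$ (Lemma~\ref{lem:starcons}), so all its faces are triangles.
  \item Hence two recipients that are consecutive in the rotation at the donor bound a triangular face with it, and so they are adjacent.
  \item A four-vertex recipient has only seven-vertex neighbors. So two adjacent recipients would be adjacent eligible five-vertices with full $C_5$ links, which Lemma~\ref{lem:H55} forbids.
  \item Therefore recipients are pairwise nonconsecutive among the seven neighbors. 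There are at most three of them, and the donor keeps at least $1-1=0$.
\end{enumerate}
Your variant of the rule also feeds double-boundary four- and five-vertices, and that breaks this count. A double-boundary recipient may be adjacent to an eligible five-vertex recipient, since Lemma~\ref{lem:starcons}(v) allows one neighbor of degree at most five, so the three-recipient bound is lost. Since double-boundary vertices only need charge at least $-2$, restrict the rule to eligible recipients.

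Second, an eligible four-vertex with no neighbor on $T$ has link $C_4$ and gets nothing from faces. It therefore ends at $-2+\tfrac43=-\tfrac23$, not $\tfrac13$. This violates (b), and without a separating triangle it contradicts global nonnegativity. Excluding this case is exactly the job of Lemma~\ref{lem:H4}: an interior four-vertex with link $C_4$ and no neighbor on $T$ is reducible. Consequently there are no four-vertices at all when $T$ does not exist. Otherwise every eligible four-vertex has exactly one neighbor on $T$ and ends at exactly $-1$ with only seven-vertex neighbors, as (c) requires.

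With these two repairs the rest of your accounting goes through. For five-vertices, note only that Lemma~\ref{lem:H5} forbids consecutive \emph{interior} six-neighbors, so your claim that all six-neighbors are nonconsecutive holds only when $v$ has no neighbor on $T$. When $v$ has a neighbor on $T$, condition (c) needs nothing beyond charge at least $-1$ and no four-vertex neighbor, as you observe.
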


\begin{proof}
Suppose a minimal counterexample exists.  Apply the face rule, and in addition let every
interior seven-vertex send $\tfrac13$ to each adjacent eligible vertex of degree four or five.  For
an eligible vertex $v$ let $b(v)\in\{0,1\}$ be its number of neighbors on $T$ ($b(v)=0$ if there is
no separating triangle).

By Lemma~\ref{lem:starcons}, every recipient has link $C_4$ or $C_5$, so all its faces are
triangles, and a recipient of degree four has only neighbors of degree seven.  If two recipients
$w,w'$ were consecutive in the rotation at a donor $u$, the face between $uw$ and $uw'$ would be
incident with $w$, hence a triangle, so $w$ and $w'$ would be adjacent.  Then neither would have
degree four, so they would be adjacent eligible five-vertices with full links, contrary to
Lemma~\ref{lem:H55}.  So $u$ has at
most three recipients and finishes with charge at least $1-1=0$.

By Lemma~\ref{lem:H4}, an eligible four-vertex $v$ has $b(v)=1$; its other three neighbors are
interior seven-vertices, so it finishes with charge exactly $-1$, and all its neighbors have
degree seven.  Let $v$ be an eligible five-vertex and $b=b(v)$.  If $v$ has a neighbor of degree at
most five, its other four neighbors have degree seven (Lemma~\ref{lem:starcons}), and at least
$4-b$ of them are interior.  Otherwise its neighbors have degree six or seven, and by
Lemma~\ref{lem:H5} no two consecutive vertices of its link $C_5$ are interior six-vertices.  The
interior neighbors form the whole link if $b=0$ and a path on four vertices if $b=1$, so at most
two of them have degree six and at least $3-b$ are seven-vertices.  In both cases $v$ finishes
with charge at least $-1+(3-b)/3=-b/3$.

Interior six-vertices are nonnegative, and faces retain $b_f(2-6/\ell)$.  A double-boundary vertex
receives nothing from seven-vertices and sends nothing unless it is a donor; so it finishes with
charge at least $-2$, with equality only for degree four and triangular incident faces.  Without a
separating triangle all final charges are nonnegative, contradicting the total $-12$.  Otherwise
the hypotheses of Lemma~\ref{lem:equality} hold: a vertex with one neighbor on $T$ finishes with
charge at least $-1$, with equality only for four-vertices, whose neighbors all have degree seven.
The resulting octahedral cap contradicts minimality.
\end{proof}

\section{Maximum degree six}\label{sec:d6}

In this section $D=6$ and the palette has eleven colors.  After two more lemmas we find a facial
triangle containing a vertex of negative charge (Lemma~\ref{lem:selection}), and we show that
such a triangle is reducible by checking finitely many cases by computer
(Lemmas~\ref{lem:generic} and~\ref{lem:onespine}).

\subsection{Two more lemmas}

\begin{lemma}\label{lem:cap4}
Let $xyz$ be a triangle and let $A,B,C,v$ be vertices with
\[
  N(A)=\{B,v,x,y\},\quad N(B)=\{A,C,v,y,z\},\quad N(C)=\{B,v,x,z\},\quad N(v)=\{A,B,C,x\}.
\]
Then every $D$-coloring of $G-\{A,B,C,v\}$ extends to $G$, for every $D\ge6$.
\end{lemma}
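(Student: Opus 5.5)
The plan is to run the proof of Lemma~\ref{lem:octcap} again for this four-vertex cap. First list the uncolored edges, bound their lists using Lemma~\ref{lem:partner}, and determine the conflict graph on them. Then color in two stages: first the \emph{arms} (edges from $\{A,B,C,v\}$ to $T$), whose lists are small, and then the \emph{inner} edges, whose lists are large. The uncolored set is
\[
  U=(AB,BC,Av,Bv,Cv,\;vx,Ax,Ay,By,Bz,Cx,Cz),
\]
twelve edges. As before, write $O_x,O_y,O_z$ for the neighbors of $x,y,z$ outside $\{x,y,z,A,B,C,v\}$, with sizes $o_x\le D-5$ (since $x$ already has the neighbors $y,z,A,v,C$) and $o_y,o_z\le D-4$. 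Write $h_{xy},h_{yz},h_{xz}$ for the pairwise intersections.

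\emph{List sizes.} For the inner edges, every common neighbor of the two ends lies in $\{A,B,C,v,x,y,z\}$. Hence the only colored edges they see are edges of $xyz$, together with $vx$-type arms, which are uncolored. This gives roughly: $Bv$ sees nothing colored, so its list has $2D-1$ colors; $Av$ and $Cv$ each see one edge of $T$ ($xy$, respectively $xz$), so their lists have $2D-2$ colors; $AB$ and $BC$ each see two edges of $T$, so their lists have $2D-3$ colors. For the arms, count the colored incident edges at the vertex of $T$, plus one edge of $T$ and the $h$-partners through the other corner, as in the table of Lemma~\ref{lem:octcap}. The arm $vx$ has no colored partners (its common neighbors $A$ and $C$ contribute only uncolored edges), so its list has at least $2D-1-(2+o_x)\ge D+2$ colors. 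The arms $Ax$ and $Cx$ get at least $2D-1-(2+o_x)-(1+h)\ge 6$ colors, and $Ay,By,Bz,Cz$ at least $2D-1-(2+o)-(1+h)\ge 4$ colors. I will recheck each count carefully. In particular I must check that each $h$-term really is bounded by $D-5$ when the corner $x$ is involved, and $D-4$ otherwise.

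\emph{Conflict graph.} Using the last statement of Lemma~\ref{lem:partner} (two disjoint edges conflict iff at least three of the four cross edges are present), I would tabulate the nonconflicting pairs among the arms. I expect a pattern like that of the octahedral cap: $\{Ay,Cz\}$ and similar opposite pairs are nonadjacent, and the arms at $y$ and $z$ form a short even cycle or a path in the conflict graph.

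\emph{Coloring order.} The order then is as follows. Color the two arms at $x$ with large lists, $Ax$ and $Cx$, after the small arms, or if needed first, as in Lemma~\ref{lem:octcap}. Use a nonconflicting pair to save a color, then finish $Ay,By,Bz,Cz$ by Lemma~\ref{lem:evencycle} or greedily. Next color $vx$, which keeps at least $D+2-6\ge2$ colors after the six arms if it sees them all. Finally color the inner edges in the order $AB,BC,Av,Cv,Bv$. Each of them sees at most the $7$ arms and the earlier inner edges, and the list sizes $2D-3\ge9$, $2D-2\ge10$, $2D-1\ge11$ exceed these counts. In particular $Bv$ conflicts with at most $11$ edges of $U$ and has $11$ colors only when $D=6$, so it should be colored last, and I must check that it misses at least one of the other eleven edges, or else use Lemma~\ref{lem:greedy} with $Bv$ removed first.

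\emph{Main obstacle.} The delicate step is the arm stage when $D=6$. The lists at $y$ and $z$ can be as small as four, and $B$ contributes two extra arms, so the arms at $y$ and $z$ may form a denser conflict graph than the $4$-cycle of Lemma~\ref{lem:octcap}. I would first try to find two disjoint nonconflicting arm pairs and give each pair a common color, or use the even-cycle trick. If that fails, I would apply Lemma~\ref{lem:cn} to the small conflict graph on the arms, or on all of $U$, with the computed list bounds and find a nonvanishing monomial. As a final fallback, I would argue, as in Lemma~\ref{lem:H4}, that all lists come from one coloring, so the potential Hall obstruction forces a rainbow set too large for the palette.
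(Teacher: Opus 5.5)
\textbf{There is a genuine gap, and it is in the final stage, not the arm stage.}

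Your list sizes are correct. For $Ay$ and $Cz$ you can even get $5$, since $h_{xy},h_{xz}\le D-5$. The failure is your claim about the inner edges. You color them in the order $AB,BC,Av,Cv,Bv$ and say that each sees at most the seven arms plus the earlier inner edges, and that the list sizes exceed these counts. At $D=6$ this is false for two edges:
\begin{itemize}
\item $Cv$ has $10$ colors against $7+3=10$ edges;
\item $Bv$ has $11$ colors against $7+4=11$ edges.
\end{itemize}
Neither of your fallbacks rescues $Bv$. It really does conflict with every other edge of $U$: it meets $AB,BC,Av,Cv,By,Bz,vx$, and each of $Ax,Ay,Cx,Cz$ has three cross edges with it. Putting $Bv$ first in a removal order is exactly the same failing count.

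Reordering the inner clique does not help either, as long as you only use ``each inner edge sees at most seven arms.'' The residual sizes would then be $(2,2,3,3,4)$. Four clique lists of sizes $2,2,3,3$ lying inside a common $3$-set already cannot be colored, so the criterion of Lemma~\ref{lem:hall} is not met.

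\textbf{The missing step} is to actually compute the conflicts between inner edges and arms. By the last statement of Lemma~\ref{lem:partner}, each of the pairs
\[
\{AB,Cx\},\ \{AB,Cz\},\ \{BC,Ax\},\ \{BC,Ay\},\ \{Av,Bz\},\ \{Av,Cz\},\ \{Cv,Ay\},\ \{Cv,By\}
\]
has at most two cross edges, so none of them conflict. Hence after all seven arms are colored, $AB,BC,Av,Cv,Bv$ keep at least $4,4,5,5,4$ colors. The inner edges form a clique. Coloring them in increasing order of these residual sizes works by Lemma~\ref{lem:hall}; in particular $Bv$ goes third, not last.

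\textbf{The arm stage} you flagged as the main obstacle is routine. The nonconflicting arm pairs are exactly the consecutive pairs of the $7$-cycle $Ax,Bz,vx,By,Cx,Ay,Cz$. So each arm conflicts with exactly four other arms. Color $By,Bz$ first, then the remaining five arms greedily, each of which has at least five colors. No color-saving pairs, no Lemma~\ref{lem:evencycle} and no Lemma~\ref{lem:cn} are needed.
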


\begin{figure}[ht]
\centering
\includegraphics{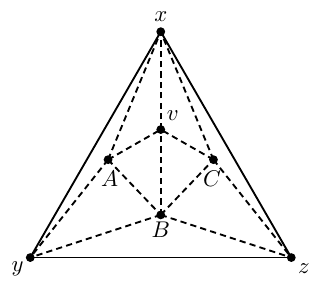}
\caption{The configuration of Lemma~\ref{lem:cap4}.  The dashed edges are the twelve edges
of $U$.}
\label{fig:cap4}
\end{figure}

\begin{proof}
See Figure~\ref{fig:cap4}.  Define $o_x,o_y,o_z$ and $h_{xy},h_{xz},h_{yz}$ as in the proof of Lemma~\ref{lem:octcap}, now with
respect to the seven named vertices.  Then $o_x\le D-5$, $o_y,o_z\le D-4$, $h_{xy},h_{xz}\le D-5$
and $h_{yz}\le D-4$.  Order the uncolored edges as
\[
  U=(AB,BC,Av,Cv,Bv,Ax,Ay,By,Bz,Cz,Cx,vx).
\]
By Lemma~\ref{lem:partner}, the colored incident edges and colored partners are:
\[
\begin{array}{c|cc}
 \text{edge} & \text{incident} & \text{partners}\\ \hline
 AB,\ BC & 0 & 2\\
 Av,\ Cv & 0 & 1\\
 Bv & 0 & 0\\
 Ax & 2+o_x & 1+h_{xy}
\end{array}
\qquad
\begin{array}{c|cc}
 \text{edge} & \text{incident} & \text{partners}\\ \hline
 Ay & 2+o_y & 1+h_{xy}\\
 By & 2+o_y & 1+h_{yz}\\
 Bz & 2+o_z & 1+h_{yz}\\
 Cz & 2+o_z & 1+h_{xz}
\end{array}
\qquad
\begin{array}{c|cc}
 \text{edge} & \text{incident} & \text{partners}\\ \hline
 Cx & 2+o_x & 1+h_{xz}\\
 vx & 2+o_x & 0\\
 & & \\
 & &
\end{array}
\]
For instance, the only colored partner of $Av$ is $xy$, the common neighbors of $v$ and $x$ are $A$
and $C$, whose edges are uncolored, and for every other arm the partner counted by the $1$ is the
edge of $xyz$ opposite the arm's end in $\{x,y,z\}$.  The lists therefore have sizes at least
\[
  (2D-3,\,2D-3,\,2D-2,\,2D-2,\,2D-1,\,6,\,5,\,4,\,4,\,5,\,6,\,D+2),
\]
which dominates $(9,9,10,10,11,6,5,4,4,5,6,8)$.  Call the first five edges of $U$ \emph{inner}
and the other seven \emph{arms}.  By the last statement of Lemma~\ref{lem:partner}, the pairs
of arms
\[
  \{Ax,Bz\},\ \{Bz,vx\},\ \{vx,By\},\ \{By,Cx\},\ \{Cx,Ay\},\ \{Ay,Cz\},\ \{Cz,Ax\}
\]
do not conflict, and neither do the pairs
\[
  \{AB,Cx\},\ \{AB,Cz\},\ \{BC,Ax\},\ \{BC,Ay\},\ \{Av,Bz\},\ \{Av,Cz\},\ \{Cv,Ay\},\ \{Cv,By\}
\]
(each has at most two cross edges).  So every arm conflicts with at most four other arms, and the
inner edges conflict with at most $5,5,5,5,7$ arms, respectively.  Color $By$ and $Bz$ with
distinct colors from their lists, of size at least four, and then the other five arms greedily:
each has at least five colors and at most four colored conflicting arms.  The inner edges then
keep lists of sizes at least $(4,4,5,5,4)$.  They form a clique ($AB,Cv$ and $BC,Av$ have three
cross edges each, and all other pairs are incident), and the sorted sizes $(4,4,4,5,5)$ satisfy
Lemma~\ref{lem:hall}.  Larger $D$ only enlarges the lists.
\end{proof}

\begin{lemma}\label{lem:exterior}
Let $T=xyz$ be a separating triangle of $G$ and $O$ the set of vertices strictly outside $T$.
Then $N(x)\cap O\ne N(y)\cap O$, and similarly for the pairs $y,z$ and $z,x$.  Consequently, if
$|N(x)\cap O|\le A_x$ and $|N(y)\cap O|\le A_y$, then
\begin{equation}\label{eq:extbound}
  |N(x)\cap N(y)\cap O|\le\min\bigl(A_x,A_y,\max(A_x,A_y)-1\bigr).
\end{equation}
\end{lemma}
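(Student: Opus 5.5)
The plan is to argue by contradiction, using Lemma~\ref{lem:nobook}-style structure together with $3$-connectivity (Lemma~\ref{lem:3conn}) and planarity. Suppose $N(x)\cap O=N(y)\cap O=:W$. By Lemma~\ref{lem:innermost}(iv), or rather by $3$-connectivity directly, each of $x,y,z$ has a neighbor in $O$, so $W\ne\emptyset$. The set $O$ is separated from the inside of $T$ by $\{x,y,z\}$. I would first rule out the degenerate case $|W|\ge2$ and the case $|W|=1$ separately, or more cleanly, show that every vertex of $W$ is a common neighbor of $x$ and $y$ lying outside $T$, and then locate a separating pair.

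The key step uses planarity. All vertices of $W$ are common neighbors of the edge $xy$ lying strictly outside $T$. Since $z$ is also a common neighbor of $xy$, the paths $x w y$ for $w\in W$, together with $xzy$, are nested in the exterior region. Order them by their rotation at $x$ (and hence, by planarity, reversed at $y$). Let $w^*$ be the common neighbor in $W$ farthest from $z$ in this nesting, meaning the triangle $xyw^*$ bounds the region containing the face on the exterior side of $xy$ and all other members of $W\cup\{z\}$ are on the other side of the cycle $xw^*y$. Then consider the region between the paths $xzy$ and $xw^*y$ that contains $O\setminus\{w^*\}$ together with $z$ on its boundary. The side of the triangle $xyw^*$ not containing $z$ contains no vertex except possibly vertices adjacent to neither $x$ nor $y$ (all neighbors of $x,y$ in $O$ lie in $W$, which is on $z$'s side or is $w^*$). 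Then any vertex in that side can attach only to $w^*$, contradicting $3$-connectivity unless that side is empty, i.e. $xyw^*$ is a face. Now look at $z$: every vertex of $O\setminus W$ is adjacent to neither $x$ nor $y$, so a component of $O\setminus W$ attaches only to $W\cup\{z\}$. More decisively, $x$ has no neighbors in $O$ other than $W$, and the same for $y$, so $\{x,y\}$ already sits in a ``book-like'' layer. I would derive the contradiction by noting that $N(x)\cap O=N(y)\cap O$ forces, in the outside disk, that the rotation at $x$ from $xy$ to $xz$ (exterior side) and the rotation at $y$ from $yx$ to $yz$ meet the same set $W$ in opposite order; the region bounded by $x w_1 y w_2$ for consecutive $w_1,w_2$ in the nesting contains no neighbor of $x$ or $y$, so its interior vertices attach only to $\{w_1,w_2\}$, and must be empty by $3$-connectivity. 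Hence the exterior of $T$ is built from $W$ and the last region bounded by $x w_{\mathrm{last}} y z$ whose interior vertices attach only to $\{w_{\mathrm{last}},z\}$. The vertices of $O$ other than $W$ are then separated from the rest by $\{w_{\mathrm{last}},z\}$ unless there are none; and if there are none, $O=W$ and $\{x,y\}$ together with... I expect this final step to be the main obstacle: one must check that then $z$'s outside neighbors lie in $W$, so $O=W$, and then $\{x,y,\text{vertices of }W\}$ structure forces either a separating pair $\{z, w\}$ for the interior side or contradicts $z$ having an outside neighbor not adjacent to both, and I would resolve it by exhibiting $\{z,w_{\mathrm{last}}\}$ as a $2$-cut separating the remaining outside vertices, or, when $O=W$, showing the graph is forced into a small configuration contradicting Lemma~\ref{lem:innermost}(iv) applied with the roles swapped.

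The consequence \eqref{eq:extbound} is then immediate: $|N(x)\cap N(y)\cap O|$ is at most $A_x$ and $A_y$, and if it equaled $\max(A_x,A_y)$ then with $A_x\ge A_y$ say it would equal $A_x\ge|N(x)\cap O|$, forcing $N(x)\cap O\subseteq N(y)\cap O$ with equal size bound; combined with $\le A_y\le A_x$ this forces $A_x=A_y$ and both neighborhoods in $O$ equal to the common part, contradicting the first assertion.
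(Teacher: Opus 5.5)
Your reduction to $O=W$ is sound and differs from the paper's. You use that the paths $xwy$ ($w\in W$) are nested chords of the exterior disk of $T$. Then the triangle between $xy$ and the first chord, and each region between consecutive chords, has interior vertices attaching to at most two vertices. The last region $x\,w_{\mathrm{last}}\,y\,z$ can contain vertices only if $\{w_{\mathrm{last}},z\}$ is a $2$-cut. The paper instead contracts a component of $G[O\setminus W]$ and gets either a $K_{3,3}$ minor or a small separator. Your derivation of \eqref{eq:extbound} from the first assertion is also correct.

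The gap is the case $O=W$. You leave it open, and neither suggested resolution can close it. When $O=W$ there are no outside vertices left for $\{z,w_{\mathrm{last}}\}$ to separate. Moreover, $3$-connectivity together with Lemma~\ref{lem:innermost}(iv) does not exclude this case. For example, take $K_5$ minus an edge $uw$, and let $T=xyz$ be the triangle on the other three vertices:
\begin{itemize}
\item the graph is planar and $3$-connected;
\item $T$ separates $u$ from $w$;
\item each vertex of $T$ has a neighbor on each side;
\item yet $N(x)\cap O=N(y)\cap O=\{w\}$.
\end{itemize}
The missing ingredient is minimum degree at least four (Corollary~\ref{cor:structure}). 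With it your picture finishes at once. When $O=W$, let $w_1$ be the vertex whose chord is next to the edge $xy$. It borders only two regions, neither with interior vertices, and it has no neighbor inside $T$. So its neighbors are $x$, $y$ and at most one of $w_2$ (or $z$ if $|W|=1$), giving $d(w_1)\le3$. The paper ends the same way: it shows via Lemma~\ref{lem:join} and $3$-connectivity that $G[W\cup\{z\}]$ is a path, whose end in $W$ then has degree three.
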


\begin{proof}
Suppose $N(x)\cap O=N(y)\cap O=W$.  If $W$ were empty, $O$ would attach to the rest of $G$ only at
$z$, which would be a cut vertex; so $W\ne\emptyset$.  A component of $G[O\setminus W]$ has no
neighbor inside $T$
and no neighbor $x$ or $y$, so it attaches only to $W\cup\{z\}$, all of whose vertices are common
neighbors of $x$ and $y$.  Three attachments would give, after contracting the component, a
$K_{3,3}$ minor against $x,y$; at most two attachments would form a separating set of size at most
two.  Hence $O=W$.  The join of $xy$ with $G[W\cup\{z\}]$ is a planar subgraph, so $G[W\cup\{z\}]$ is
a linear forest (Lemma~\ref{lem:join}); it is connected, since otherwise deleting $x,y$ would cut a
component avoiding $z$ from the inside of $T$.  So it is a path with at least two vertices, one of
its ends lies in $W$, and that end has exactly the neighbors $x$, $y$ and one path neighbor,
contradicting minimum degree four.  For \eqref{eq:extbound}: the intersection has size at most
$A_x$ and $A_y$, and since the two sets differ, one of them has at least one more element than the
intersection.
\end{proof}

\subsection{Selecting a facial triangle}

Apply only the face rule (Lemma~\ref{lem:facerule}).  An eligible vertex of degree at least six
is nonnegative, and so is an eligible five-vertex with link $P_3+P_2$, since two of its incident
faces have length at least four and give at least $\tfrac12$ each.  By
Lemma~\ref{lem:starcons}(i), a negative eligible vertex therefore has link $C_4$, $C_5$ or $P_5$.

\begin{lemma}\label{lem:selection}
Every negative eligible vertex $v$ lies in a facial triangle $Q=vpq$ with $p,q\in I$ in which at
most one vertex is double-boundary.  If there is no separating triangle, $Q$ has no
double-boundary vertex.
\end{lemma}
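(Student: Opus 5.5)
The plan is to find $Q$ among the triangles $vpq$ with $pq$ an edge of the link of $v$. Since $v$ is negative and eligible, its link is $C_4$, $C_5$ or $P_5$, as noted just before the statement. By Lemma~\ref{lem:innermost}(i),(ii) every link edge $pq$ gives a facial triangle $vpq$. We call a link edge \emph{good} if neither end lies on $T$, and we need a good edge whose ends are not both double-boundary.

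Without a separating triangle there are no double-boundary vertices, so any link edge will do, which gives the last assertion. With $T$, the first step is to count good edges. Since $v$ is eligible, at most one link vertex $t$ lies on $T$. Removing $t$ from a $C_4$, $C_5$ or $P_5$ link still leaves at least two good edges, and their ends span at least three interior neighbours of $v$. If every good edge had both ends double-boundary, all those ends would be interior tips. Because $k_2\le 3$, this forces very rigid situations. Every case with four or more interior ends is impossible outright. This covers $C_4$ without $t$, $C_5$, and $P_5$ when $t$ is absent, an end of the path, or its middle vertex. What remains are the situations where the good edges form a path $b\,c\,d$ on exactly the three tips, with $t$ present. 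This happens for link $C_4$ with $t$ opposite $c$, and for link $P_5$ of the form $a\,t\,b\,c\,d$.

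In these residual cases, label the tips so that $c$ is the tip of the edge $yz$ and $b,d$ are the tips of $xy$ and $xz$. Since $bc$ is an edge and $b,c$ are both adjacent to $y$, the triangle $byc$ is facial, and likewise $czd$ is facial. The closed walk $b\,c\,d\,x\,b$ then bounds a disk containing $v$ but not $y,z$. The neighbour $t$ of $v$ lies on $T$ and in this disk, so $t=x$. Then $v,x,d$ form a triangle, hence a facial one by Lemma~\ref{lem:innermost}(i). So $x$ and $d$ are consecutive in the rotation at $v$ and $xd$ is a link edge.

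For link $P_5$ this closes the link into a cycle, a contradiction. For link $C_4$ the faces around $b,c,d$ are all forced, giving
\[
N(b)=\{c,v,x,y\},\quad N(c)=\{b,d,v,y,z\},\quad N(d)=\{c,v,x,z\},\quad N(v)=\{b,c,d,x\}.
\]
This is exactly the configuration of Lemma~\ref{lem:cap4}, which contradicts minimality.

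The main obstacle is this last configuration analysis: verifying that facial triangles force the stated neighbourhoods, and handling the $P_5$ subcase, where I expect the planarity argument with the disk bounded by $b\,c\,d\,x$ to be the delicate step.
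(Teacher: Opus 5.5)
Your proof is correct and follows the paper's argument. It uses the same notion of good link edges, the same elimination (via $k_2\le3$) of every case with four or more interior ends, and the same reduction of the residual $C_4$ case to the configuration of Lemma~\ref{lem:cap4}. The only difference is that the Jordan-curve step you flag as delicate is unnecessary: a double-boundary vertex not adjacent to $t$ must be the interior tip of the edge of $T$ opposite $t$, so there is at most one such vertex, which rules out the $P_5$ case (there $c$ and $d$ are both nonadjacent to $t$) and gives $t=x$ in the $C_4$ case directly from $t\sim b$ and $t\sim d$.
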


\begin{proof}
Let $J$ be the link of $v$, and let $P$ be the set of interior neighbors of $v$ that are
double-boundary.  Each member of $P$ is the unique interior common neighbor of an edge of $T$, so
$|P|\le3$, and $P=\emptyset$ without a separating triangle.  Call an edge $pq$ of $J$ \emph{good}
if $p,q\notin V(T)$ and at most one of $p,q$ lies in $P$.  A good edge gives the required triangle:
$p,q\in I$, the triangle $vpq$ is facial by Lemma~\ref{lem:innermost}(i), and $v$, being eligible,
is not double-boundary.

Suppose that no edge of $J$ is good.  If $v$ has no neighbor on $T$, both ends of every edge of $J$
lie in $P$; as $J$ has no isolated vertex, $|P|\ge d(v)\ge4$, a contradiction.  Otherwise $v$ has a
unique neighbor $t$ on $T$, and every vertex that is not isolated in $J-t$ lies in $P$.  A member of
$P$ not adjacent to $t$ is the interior tip of the edge of $T$ opposite $t$; hence
\begin{equation}\label{eq:Pt}
  |P\setminus N_J(t)|\le1 .
\end{equation}
If $J=C_5$, or $J=P_5$ with $t$ an end or the middle vertex, then $J-t$ has four nonisolated
vertices, too many for $P$.  If $J=P_5$ and $t$ is next to an end, the nonisolated vertices of
$J-t$ form a path on three vertices, two of which are not adjacent to $t$, contrary to
\eqref{eq:Pt}.  So $J=C_4$, with cyclic order $t,A,B,C$, and $P=\{A,B,C\}$.

Write $t=x$.  The vertices $A$ and $C$ are adjacent to $x$, and their second neighbors on $T$ are
distinct, because an edge of $T$ has only one interior common neighbor; call them $y$ and $z$, so
that $A$ is the interior tip of $xy$ and $C$ that of $xz$.  The edge $xB$ is absent, being a chord
of $J$, so $B$ is the interior tip of $yz$.  All triangles touching $I$ are faces.  At $A$ the faces
$xyA,yBA,ABv,Avx$ close the rotation; at $B$ the faces $yzB,zCB,BCv,BvA,BAy$ do; at $C$ the faces
$zxC,xvC,CvB,BCz$ do.  Hence $N(A)=\{x,y,B,v\}$, $N(B)=\{y,z,C,v,A\}$, $N(C)=\{z,x,v,B\}$ and
$N(v)=\{x,A,B,C\}$, which is the configuration of Lemma~\ref{lem:cap4}; this contradicts
minimality.
\end{proof}

\subsection{Templates}

Lemmas~\ref{lem:normal}--\ref{lem:monotone} hold for every $D\ge6$; we use them only for $D=6$.
Let $Q=\{a,b,c\}$ be a facial triangle with $a,b,c\in I$ and degrees $d_a,d_b,d_c\ge4$.  For
$q\in Q$, the $d_q-2$ neighbors of $q$ outside $Q$ are its \emph{outer neighbors}; we list them in
rotation order and write $p_{qr}[q]$ for the one next to the edge $qr$.  For each pair $qr$ of
vertices of $Q$, either $p_{qr}[q]=p_{qr}[r]$, and we call this vertex the \emph{shared tip}
$p_{qr}$, or the two are distinct, and we say that $qr$ has \emph{split tips}.  The vector
$\sigma=(\sigma_{ab},\sigma_{ac},\sigma_{bc})\in\{0,1\}^3$ records which pairs have a shared tip.
Thus the outer neighbors are, in order,
\begin{equation}\label{eq:paths}
\begin{aligned}
  a&:\ p_{ac}[a],a_0,\dots,a_{d_a-5},p_{ab}[a],\\
  b&:\ p_{ab}[b],b_0,\dots,b_{d_b-5},p_{bc}[b],\\
  c&:\ p_{bc}[c],c_0,\dots,c_{d_c-5},p_{ac}[c],
\end{aligned}
\end{equation}
where the middle part is empty for degree four.  An edge joining two consecutive outer neighbors
of $q$ is a \emph{rim edge} of $q$; there are $d_a+d_b+d_c-9$ potential rim edges.  Given the
degrees, $\sigma$ and a set of potential rim edges, the \emph{template} $R$ is the graph
consisting of $ab,ac,bc$, all edges from each $q\in Q$ to its outer neighbors, and the chosen rim
edges; the template of $Q$ itself uses the rim edges present in $G$ (Figure~\ref{fig:template}).
Let $U$ be the set of edges of $R$ with an end in $Q$; for the template of $Q$ these are the edges
of $G$ with an end in $Q$.  Let $H_R$ be the graph on $U$ in which two edges are adjacent if they
meet or if at least three of their four cross edges lie in $R$.

\begin{figure}[ht]
\centering
\includegraphics{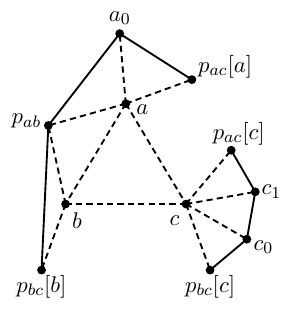}
\caption{A template with $(d_a,d_b,d_c)=(5,4,6)$, a shared tip at $ab$ and split tips at $ac$ and
$bc$.  The dashed edges form $U$; the solid edges are the $d_a+d_b+d_c-9=6$ potential rim edges,
all present here.}
\label{fig:template}
\end{figure}

\begin{lemma}\label{lem:normal}
Let $Q\subseteq I$ be a facial triangle whose vertices have degree at least four, and let $R$ be
its template.  The vertices named in \eqref{eq:paths} are distinct except for shared tips, and
for each $q\in Q$ the edges of $G$ between neighbors of $q$ are exactly the edges of $R$ between
neighbors of $q$.  Moreover, $H_R$ is the conflict graph on $U$.
\end{lemma}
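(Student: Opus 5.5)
The plan is to read everything off Lemma~\ref{lem:innermost}, applied at the interior vertices $a,b,c$, and then to use Lemma~\ref{lem:partner} for the conflict graph. The proof has three steps, in this order.

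\emph{Step 1: distinctness.} The outer neighbors of a single $q\in Q$ are distinct neighbors of $q$, so they are distinct. Now suppose $w\notin Q$ is adjacent to both $a$ and $b$. Then $abw$ is a triangle through $ab$ other than $abc$. By Lemma~\ref{lem:innermost}(i) it is facial and is the unique triangle on the other side of $ab$. Hence $w$ is the neighbor of $a$ next to $ab$ in rotation, and likewise at $b$, so $w=p_{ab}[a]=p_{ab}[b]$ is the shared tip $p_{ab}$. The same holds for $ac$ and $bc$, so two named vertices at different members of $Q$ coincide only as a shared tip. It remains to rule out one vertex being the shared tip of two pairs. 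If $w=p_{ab}=p_{ac}$, then the facial triangles $abw$, $acw$ and $abc$ close the rotation at $a$, which forces $d(a)=3$, contrary to $d_a\ge4$.

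\emph{Step 2: edges among neighbors of $q$.} By Lemma~\ref{lem:innermost}(ii), the link of $q$ is a subgraph of the rotation cycle at $q$. Take $q=a$; this cycle is
\[
  b,\ c,\ p_{ac}[a],\ a_0,\dots,\ p_{ab}[a].
\]
So every edge of $G$ among neighbors of $a$ is one of the following: $bc$; a rim edge of $a$; $c\,p_{ac}[a]$; or $b\,p_{ab}[a]$. The edge $bc$ is in $R$, and rim edges present in $G$ are in $R$ by definition. If $b\,p_{ab}[a]\in E(G)$, then the triangle $ab\,p_{ab}[a]$ is facial, so the tip is shared, as in Step 1. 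In that case $b\,p_{ab}$ is an edge from $b$ to an outer neighbor, hence in $R$. The edge $c\,p_{ac}[a]$ is handled the same way. Conversely, every edge of $R$ is an edge of $G$. This gives the second assertion.

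\emph{Step 3: $H_R$ is the conflict graph.} Lemma~\ref{lem:partner} says two edges of $U$ conflict if and only if they meet or at least three of their four cross edges lie in $G$. So it suffices to show that, for disjoint $e,f\in U$, at least three cross edges lie in $G$ exactly when at least three lie in $R$. Write $e=qw$ and $f=q'w'$ with $q\ne q'$ in $Q$; the case where $e$ is an edge of $Q$ is immediate, since all its cross edges have an end in $Q$. Every cross edge with an end in $Q$ is either an edge of $Q$ or an edge of $G$ at a vertex of $Q$, and both kinds lie in $R$. The only possible discrepancy is therefore $ww'$. Suppose $ww'$ is in $G$ and at least three cross edges are present. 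Then besides $qq'$ one of $qw'$ or $q'w$ is present, say $qw'$. Now $w$ and $w'$ are both neighbors of $q$, so by Step 2 the edge $ww'$ lies in $R$. If neither $qw'$ nor $q'w$ is present, the count is at most two in both graphs.

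I expect Step 1 to be the main point needing care: identifying a common neighbor with the shared tip, and excluding a vertex that is a tip of two pairs. Both reduce to the facial-triangle uniqueness in Lemma~\ref{lem:innermost}(i) together with $d_q\ge4$.
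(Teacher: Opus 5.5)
Your proposal is correct and follows the paper's proof essentially step for step. Common neighbors outside $Q$ are identified as shared tips via Lemma~\ref{lem:innermost}(i), and a vertex that is the shared tip of two pairs is excluded because degree at least four forbids a triangle in a rotation cycle. The link-in-rotation-cycle property then gives the edges among neighbors of $q$, and Lemma~\ref{lem:partner} reduces the conflict comparison to the single cross edge $ww'$, which matters only when $w,w'$ are both neighbors of $q$ or both neighbors of $q'$.
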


\begin{proof}
A vertex that is an outer neighbor of two vertices $q,r\in Q$ is a common neighbor of $q$ and $r$
other than the third vertex of $Q$.  There is at most one such vertex, since $qr$ lies in at most
two triangles (Lemma~\ref{lem:innermost}(i)), and its triangle with $q$ and $r$ is a face, so it
is the outer neighbor of $q$ next to $qr$ and also that of $r$: it is a shared tip.  Two shared
tips cannot coincide, since such a vertex would be adjacent to all of $Q$ and would create a
triangle in the link of a vertex of degree at least four, whereas links are subgraphs of rotation
cycles.  The link of $q$ is a subgraph of its rotation cycle, so its only possible edges are the
edge between the other two vertices of $Q$, the edges $r\,p_{qr}[q]$ for the two vertices
$r\in Q\setminus\{q\}$ (present exactly for shared tips), and rim edges of $q$.  This proves the
first part.

Let $e,e'\in U$ be disjoint and choose $q\in e\cap Q$, $q'\in e'\cap Q$, writing $e=qs$, $e'=q's'$.
By Lemma~\ref{lem:partner} the pair conflicts if and only if at least three of $qq',qs',sq',ss'$
are edges of $G$.  Here $q\ne q'$, so $qq'\in R$, and $qs'$, $sq'$ are edges of $G$ exactly when
they are edges of $R$, since $R$ contains all edges at $q$ and $q'$.  The edge $ss'$ affects the
outcome only when exactly one of $qs',sq'$ is present; then $s$ and $s'$ are both neighbors of
$q$, or both neighbors of $q'$, so $ss'$ is an edge of $G$ exactly when it is an edge of $R$.
Pairs that meet are adjacent both in $H_R$ and in the conflict graph.
\end{proof}

We delete $Q$ and keep the colors of all other edges, so $U$ is the set of uncolored edges.

\begin{lemma}\label{lem:counts}
For an edge of $U$ with both ends in $Q$ there are no colored incident edges, and all its
partners are edges of $R$; so its list has size at least $2D-1$ minus the number of its partners
in $R$ that are not in $U$.  For an arm $qx$ with $q\in Q$ and $x\notin Q$ put
\begin{align*}
  m(x)&=|N_R(x)\cap Q|,\qquad C(qx)=\bigl(N_R(q)\cap N_R(x)\bigr)\setminus Q,\\
  K(qx)&=\{ty\in E(R):\ t\in C(qx),\ y\in N_R(q)\setminus(Q\cup\{x,t\})\}.
\end{align*}
If no edge $xt$ with $t\in C(qx)$ is an edge of $T$, the list of $qx$ has size at least
\begin{equation}\label{eq:arm}
  f(qx)=2D-1-\bigl(D-m(x)\bigr)-|K(qx)|-|C(qx)| .
\end{equation}
For each $t\in C(qx)$ with $xt\in E(T)$, let $h_{xt}$ be the number of common neighbors of $x,t$
strictly outside $T$; then subtracting $h_{xt}$ from \eqref{eq:arm} gives a valid bound.
\end{lemma}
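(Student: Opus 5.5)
We work in the setting just described: $Q$ is deleted, every other edge keeps its color, and lists are as in Lemma~\ref{lem:lists}. The approach is to count, for each uncolored edge, the colored edges that see it, using Lemma~\ref{lem:partner} together with the identification of $G$ and $R$ near $Q$ from Lemma~\ref{lem:normal}.

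\emph{Edges inside $Q$.} Take $e=qr\subseteq Q$. Every edge of $G$ incident with $e$ has an end in $Q$, so it lies in $U$ and is uncolored. By Lemma~\ref{lem:partner}, a partner of $qr$ has the form $ty$ with $t\in N(q)\cap N(r)$ and $y$ adjacent to $q$ or $r$. Here $t$ is a neighbor of $q$, and $y$ is a neighbor of $q$ or of $r$, so $ty$ joins two neighbors of a vertex of $Q$. By Lemma~\ref{lem:normal}, $ty$ is therefore an edge of $G$ exactly when it is an edge of $R$. The common neighbors of $q,r$ are likewise read off $R$. Hence every partner lies in $R$, and only those outside $U$ are colored, which gives the first claim.

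\emph{Arms.} Take an arm $qx$ with $q\in Q$ and $x\notin Q$.
\begin{itemize}
\item \emph{Incident colored edges.} All edges at $q$ are uncolored. The colored edges at $x$ are those going to vertices outside $Q$. Their number is $d(x)-|N(x)\cap Q|\le D-m(x)$, because the edges from $x$ into $Q$ all lie in $R$, so $N(x)\cap Q=N_R(x)\cap Q$.
\item \emph{Partners through $t\in Q$.} These partners are edges $ty$ with an end in $Q$, so they are uncolored and contribute nothing.
\item \emph{Partners through $t\notin Q$.} Such a $t$ is a common neighbor of $q$ and $x$, so $t\in C(qx)$; membership is decided in $R$ by Lemma~\ref{lem:normal}, since $t,x\in N(q)$. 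The partners through $t$ are the edges $ty$ with $y\in (N(q)\cup N(x))\setminus\{q,x,t\}$.
\end{itemize}

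\emph{Splitting the partners through $t\in C(qx)$.}
\begin{itemize}
\item If $y\in N(q)\setminus Q$, then $ty$ joins two neighbors of $q$, so it lies in $R$ iff it lies in $G$; these are exactly the edges counted in $K(qx)$.
\item If $y\in Q$, the edge $ty$ is uncolored.
\item If $y\in N(x)\setminus N(q)$, then $ty$ lies on a triangle $txy$ through the edge $xt$. We use Lemma~\ref{lem:innermost}(i): an edge with an endpoint in $I$ lies in at most two triangles. Since $t$ and $x$ are neighbors of $q\in I$, the edge $xt$ touches $I$ unless both $x,t$ are on $T$, that is, unless $xt\in E(T)$. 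In the first case $xt$ already lies in the triangle $qxt$, so there is at most one further $y$, giving at most one extra partner per $t$, hence at most $|C(qx)|$ in total. That one extra partner may coincide with an edge counted in $K$ or be uncolored; this only overcounts, which is harmless.
\end{itemize}
Adding the three contributions gives \eqref{eq:arm}.

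\emph{The case $xt\in E(T)$.} If $xt$ is an edge of $T$, the vertices $y$ adjacent to both $x$ and $t$ are the interior tip, the third vertex of $T$, and the common neighbors strictly outside $T$. The interior tip is $q$ itself, because $q$ is adjacent to both. The third vertex of $T$ contributes one edge, which is already covered by the ``$+1$'' assigned to $t$ in $|C(qx)|$. The common neighbors strictly outside $T$ contribute at most $h_{xt}$ further partners. Subtracting $h_{xt}$ therefore yields a valid bound.

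The main obstacle is this last bookkeeping step: making sure each colored partner is counted once and the ``at most one extra'' accounting survives when $x,t\in V(T)$. The remaining care is in invoking Lemma~\ref{lem:normal} only for pairs that are both neighbors of a single vertex of $Q$.
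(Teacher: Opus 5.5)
Your proof is correct and follows the paper's argument essentially step for step: you read incidences, common neighbors and link edges off $R$ via Lemma~\ref{lem:normal}, bound the extra partners through each $t\in C(qx)$ by one using the two-triangle property of Lemma~\ref{lem:innermost}(i) when $xt$ touches $I$, and use Lemma~\ref{lem:innermost}(iii) to identify $q$ as the interior tip when $xt\in E(T)$. You also state explicitly that partners $ty$ with $y\in Q$ are uncolored and so are rightly excluded from $K(qx)$, a detail the paper's wording leaves implicit.
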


\begin{proof}
A partner of an edge inside $Q$ joins two neighbors of one of its ends, so it is an edge of $R$
by Lemma~\ref{lem:normal}.  For an arm $qx$, all edges at $q$ are uncolored and exactly $m(x)$
edges at $x$ are uncolored, so at most $D-m(x)$ colored incident edges see $qx$.  The common
neighbors of $q$ and $x$ are the neighbors of $x$ in the link of $q$, which $R$ determines; those
in $Q$ have only uncolored edges.  For $t\in C(qx)$, the partners $ty$ with $y\in N(q)$ are edges
of the link of $q$ and are counted in $K(qx)$.  Any further partner $ty$ has
$y\in N(x)\setminus N(q)$, so $y\ne q$ is a common neighbor of $x$ and $t$.  Since $x,t\in N(q)$
and $q\in I$, both lie in $I\cup V(T)$, so either $xt$ has an end in $I$ or $xt\in E(T)$.  If $xt$
has an end in $I$, there is at most one such $y$, which accounts for the term $|C(qx)|$.  If
$xt\in E(T)$, its common neighbors other than $q$ are the third vertex of $T$ and $h_{xt}$
vertices strictly outside $T$; the term $|C(qx)|$ accounts for the former.  Counting a partner
twice only weakens the bound.  Lemma~\ref{lem:lists} completes the proof.
\end{proof}

\begin{lemma}\label{lem:monotone}
Fix the degrees and $\sigma$, let $R_0$ be a template, and let $R$ be obtained from $R_0$ by
deleting rim edges that are not edges of $T$.  Compute the bounds $f_0$ and $f$ of
Lemma~\ref{lem:counts} for $R_0$ and $R$ with the same subtractions at the same arms.  Then $U$ is
unchanged, $H_R\subseteq H_{R_0}$, and $f\ge f_0$ coordinatewise.  Hence if $H_{R_0}$ is
colorable from all lists of sizes at least $f_0$, then $H_R$ is colorable from all lists of sizes
at least $f$.
\end{lemma}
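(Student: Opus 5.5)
The plan is to check the three assertions directly from the definitions, in order.

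\emph{$U$ is unchanged.} By definition $U$ consists of $ab,ac,bc$ together with all edges from each $q\in Q$ to its outer neighbors. These edges are determined by the degrees and by $\sigma$, and rim edges never have an end in $Q$. Deleting rim edges therefore leaves $U$ unchanged.

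\emph{$H_R\subseteq H_{R_0}$.} Two edges of $U$ are adjacent in $H_R$ when they meet, which is independent of $R$, or when at least three of their four cross edges lie in $R$. Since $R\subseteq R_0$, every such triple of cross edges also lies in $R_0$. Hence every edge of $H_R$ is an edge of $H_{R_0}$.

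\emph{$f\ge f_0$ coordinatewise.} I would check each term of the bounds in Lemma~\ref{lem:counts}.
\begin{enumerate}
  \item For an edge with both ends in $Q$, the bound is $2D-1$ minus the number of its partners in $R$ that are not in $U$. These partners are edges of $R$ satisfying the condition of Lemma~\ref{lem:partner}, and that condition is monotone in the edge set. So the partners in $R$ form a subset of those in $R_0$, and the count can only drop.
  \item For an arm $qx$, the quantity $m(x)=|N_R(x)\cap Q|$ depends only on edges at $Q$, which are identical in $R$ and $R_0$. So $m(x)$ is unchanged.
  \item $C(qx)=(N_R(q)\cap N_R(x))\setminus Q$ can only shrink. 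The neighbors of $q$ are the same in both templates, and the neighbors of $x$ in $R$ form a subset of those in $R_0$.
  \item $K(qx)$ consists of edges $ty\in E(R)$ with $t\in C(qx)$ and $y\in N_R(q)\setminus(Q\cup\{x,t\})$. Each defining condition is monotone, since $E(R)\subseteq E(R_0)$, $C$ shrinks, and $N_R(q)=N_{R_0}(q)$. So $K(qx)$ shrinks as well.
\end{enumerate}
The remaining point is the extra subtraction of $h_{xt}$, which applies when $xt\in E(T)$ for some $t\in C(qx)$. The statement prescribes the same subtractions at the same arms, so these terms are identical in both bounds. Moreover, the hypothesis that only rim edges not in $T$ are deleted ensures that any edge of $T$ present as a rim edge in $R_0$ is still present in $R$. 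Hence no subtraction that is needed in $R$ can be missing from $R_0$. Altogether $f\ge f_0$ at every edge of $U$.

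\emph{The final assertion.} Suppose $H_{R_0}$ is colorable from all lists of sizes at least $f_0$. Then it is colorable from all lists of sizes at least $f$, because $f\ge f_0$. By the second part of Lemma~\ref{lem:greedy}, the spanning subgraph $H_R$ of $H_{R_0}$, which has the same vertex set $U$, is colorable from all lists of sizes at least $f$.

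The only delicate step is the bookkeeping for the $h_{xt}$ subtraction, that is, making sure that ``same subtractions at the same arms'' is consistent with the monotonicity of $C(qx)$. Everything else is routine monotonicity of the definitions in $E(R)$.
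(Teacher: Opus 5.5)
Your proposal is correct and follows the paper's own argument. The edges at $a,b,c$ are unchanged, so $U$ and $m(x)$ are too; $C(qx)$, $K(qx)$ and the partners of inner edges can only shrink; conflicts are only removed; and the second part of Lemma~\ref{lem:greedy} finishes. Your side remark about edges of $T$ is harmless but not needed for $f\ge f_0$, since the subtractions are prescribed to be identical; in any case, that a subtraction needed for $R$ is also needed for $R_0$ already follows from $C(qx)$ shrinking.
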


\begin{proof}
The edges at $a$, $b$ and $c$, hence $U$ and $m(x)$, are unchanged.  Deleting a rim edge can
only shrink $C(qx)$, $K(qx)$ and the set of partners in $R$ of an edge inside $Q$, and it removes
conflicts but creates none.  Apply Lemma~\ref{lem:greedy}.
\end{proof}

\subsection{Generic templates}

A template with no edge of $T$ among its rim edges is \emph{generic}.  By
Lemma~\ref{lem:selection}, the triangles we must reduce at $D=6$ contain a vertex of degree at
most five, and all their degrees lie in $\{4,5,6\}$.  Permuting $a,b,c$ permutes the entries of
$\sigma$ and reverses some of the lists in \eqref{eq:paths}, so we may assume that the degrees
are sorted:
\begin{equation}\label{eq:triples}
  (d_a,d_b,d_c)\in\{444,\,445,\,446,\,455,\,456,\,466,\,555,\,556,\,566\}.
\end{equation}
A template is \emph{full} if all potential rim edges are present.  With the eight choices of
$\sigma$ there are $72$ full generic templates.

\begin{lemma}\label{lem:generic}
For each of the $72$ full generic templates $R$ at $D=6$, the graph $H_R$ is colorable from all
lists of sizes at least the bounds of Lemma~\ref{lem:counts}.  This is checked by computer: for
$59$ of them we give a removal order as in Lemma~\ref{lem:greedy}, and for the other $13$ a
nonzero coefficient as in Lemma~\ref{lem:cn} (Appendix~\ref{app:comp}).  Consequently, at
$D=6$, if $Q$ has a vertex of degree at most five and its template is generic, every coloring of
$G-Q$ extends to $G$.
\end{lemma}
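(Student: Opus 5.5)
The statement has two parts: a finite computational claim about the $72$ full generic templates, and a structural consequence for the triangle $Q$ produced by Lemma~\ref{lem:selection}. I would prove the consequence from the computation using Lemmas~\ref{lem:normal}, \ref{lem:counts} and~\ref{lem:monotone}.

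\emph{The computation.} For each sorted degree triple in \eqref{eq:triples} and each $\sigma\in\{0,1\}^3$, build the full template $R_0$: the triangle $abc$, the outer neighbors listed in \eqref{eq:paths} with shared tips identified, and all $d_a+d_b+d_c-9$ rim edges. Compute $U$ and the conflict graph $H_{R_0}$ by the three-cross-edge rule. Compute the list bounds of Lemma~\ref{lem:counts}: inner edges get $11$ minus their partners in $R_0$ outside $U$, and arms get $f(qx)$ from \eqref{eq:arm}. No $T$-subtractions are needed, because the template is generic. Then search for a removal order as in Lemma~\ref{lem:greedy}, repeatedly deleting a vertex whose remaining degree is below its bound. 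When this gets stuck, pick exponents $t_i\le f_i-1$ summing to $|E(H_{R_0})|$, and find one whose coefficient in $P_{H_{R_0}}$ is nonzero (Lemma~\ref{lem:cn}). That coefficient is obtained by expanding the product in exact integer arithmetic, or more cheaply by counting orientations with prescribed outdegrees weighted by sign. The paper reports that greedy succeeds for $59$ templates and that $13$ need the Nullstellensatz. The certificates (orders and exponent vectors) are recorded in the appendix so that anyone can verify them independently.

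\emph{The consequence.} Let $Q=abc$ be as in the statement, with $D=6$. By Lemma~\ref{lem:selection}, its vertices are interior. Its degrees lie in $\{4,5,6\}$ with at least one at most $5$, and after relabeling, which permutes $\sigma$ and reverses lists, they form one of the nine triples. By Lemma~\ref{lem:normal}, the template $R$ of $Q$ describes $G$ near $Q$ faithfully, and $H_R$ is exactly the conflict graph on the uncolored set $U$ after deleting $Q$. Since $R$ is generic, no edge $xt$ with $t\in C(qx)$ is an edge of $T$. So Lemma~\ref{lem:counts} gives actual lists of size at least $f$, with no subtraction. The template $R$ arises from the full generic template $R_0$ with the same degrees and $\sigma$ by deleting the absent rim edges, none of which lies in $T$. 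Lemma~\ref{lem:monotone} then transfers colorability from $(H_{R_0},f_0)$ to $(H_R,f)$. Finally, Lemma~\ref{lem:lists} completes any $D$-coloring of $G-Q$, which exists by minimality.

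\emph{Main obstacle.} The main obstacle is the correctness and reproducibility of the computation, especially the $13$ hard templates. For these the coefficient search may have to try many exponent vectors, and the expansion involves polynomials with up to roughly $20$ variables and several dozen edges. I would first try exponent vectors that saturate the smallest lists, $t_i=f_i-1$ on the tightest arms, and adjust the slack on the inner edges. I would compute coefficients by dynamic programming over the edges, tracking only the monomials that still fit under the caps. A secondary point to double-check is the symmetry reduction: when $a,b,c$ are permuted, $\sigma$ and the rim structure must be transformed consistently, so that the $72$ full templates really cover every case.
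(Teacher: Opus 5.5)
Your proposal is correct and follows the paper's argument. The computer certificates (greedy orders or nonzero coefficients) handle the $72$ full generic templates. Lemma~\ref{lem:monotone}, with no subtractions, then transfers colorability to every generic template with the same degrees and $\sigma$, and Lemma~\ref{lem:lists} gives the extension; your extra checks, that genericity rules out $T$-subtractions via Lemma~\ref{lem:normal} and that relabeling keeps the full templates a complete cover, agree with the paper.
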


\begin{proof}
Lemma~\ref{lem:monotone}, applied with no subtractions, carries colorability from the full
template to every generic template with the same degrees and $\sigma$, and
Lemma~\ref{lem:lists} turns a list coloring into an extension.
\end{proof}

\subsection{Triangles with a double-boundary vertex}

Suppose that exactly one vertex $q^\ast$ of $Q$ is double-boundary, with neighbors $x,y$ on $T$,
and let $z$ be the third vertex of $T$ and $O$ the set of vertices strictly outside $T$.  Then
$q^\ast xy$ is a facial triangle, so $xy$ is a rim edge of $q^\ast$.  We call $xy$ the
\emph{spine}; it is the spine of the diamond with tips $q^\ast$ and $z$.  The edges $xz$ and $yz$
are not in $R$: a rim edge $xz$ or $yz$ of some $q\in Q$ would make $q$ adjacent to two vertices
of $T$, so either $q\ne q^\ast$ would be a second double-boundary vertex of $Q$, or $q=q^\ast$
would be adjacent to all of $T$.  The vertex $z$ may or may not be a vertex of $R$.

\begin{lemma}\label{lem:budget}
Put $A_x=D-1-d_R(x)$ and $A_y=D-1-d_R(y)$.  Then $h=|N(x)\cap N(y)\cap O|$ satisfies
\begin{equation}\label{eq:hstar}
  h\le h^\ast:=\min\{D-4,\ A_x,\ A_y,\ \max(A_x,A_y)-1\},
\end{equation}
and in Lemma~\ref{lem:counts} it suffices to subtract $h^\ast$ from the two arms $q^\ast x$ and
$q^\ast y$, and from no other arm.
\end{lemma}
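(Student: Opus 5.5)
The plan has two parts: first bound $h$ by the four terms of $h^\ast$, then show where the subtraction of Lemma~\ref{lem:counts} is actually needed.

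\emph{The bound on $h$.} The term $D-4$ comes from Lemma~\ref{lem:nobook}. The edge $xy$ has at most $D-2$ common neighbors. Two of them are $q^\ast$ (interior) and $z$ (on $T$), so at most $D-4$ lie strictly outside $T$.

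For the other three terms, I would bound $|N(x)\cap O|\le A_x$ and then apply \eqref{eq:extbound} of Lemma~\ref{lem:exterior}. The neighbors of $x$ inside $R$ are interior vertices or the vertex $y$. In particular $z\notin N_R(x)$, since $xz\notin R$ by the discussion preceding the lemma. So $x$ has $d_R(x)$ neighbors in $R$, none of them in $O$. It also has the neighbor $z$, which is not counted in $d_R(x)$ and lies outside $O$. Hence
\[
  |N(x)\cap O|\le d(x)-d_R(x)-1\le D-1-d_R(x)=A_x,
\]
and similarly for $y$. Then \eqref{eq:extbound} gives $h\le\min(A_x,A_y,\max(A_x,A_y)-1)$. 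Combining this with the previous paragraph gives $h\le h^\ast$.

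\emph{Where the subtraction is needed.} Lemma~\ref{lem:counts} requires a subtraction only at an arm $qx'$ having some $t\in C(qx')$ with $x't\in E(T)$. Both $x'$ and $t$ are neighbors of $q$. If $x't=xz$ or $yz$, then $q$ would be adjacent to two vertices of $T$ forming an edge of $T$ other than $xy$. That would make $q$ a second double-boundary vertex (if $q\ne q^\ast$), or make $q^\ast$ adjacent to all of $T$, contradicting Lemma~\ref{lem:innermost}(iii). So $x't=xy$ and $q$ is adjacent to both $x$ and $y$. Since $xy$ has at most one interior common neighbor, $q=q^\ast$. The arm is then $q^\ast x$ with $t=y$, or $q^\ast y$ with $t=x$. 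In both cases the required subtraction is $h_{xy}=h\le h^\ast$, and subtracting the larger value $h^\ast$ still gives a valid bound.

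The main obstacle is the count $|N(x)\cap O|\le A_x$. It depends on checking carefully that $z$ is never a neighbor of $x$ in $R$, and that every vertex of $R$ lies in $I\cup\{x,y,z\}$, which holds because all vertices of $R$ are neighbors of $Q\subseteq I$.
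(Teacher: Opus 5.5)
Your proposal is correct and follows essentially the same route as the paper's proof. Both use Lemma~\ref{lem:nobook} with the common neighbors $q^\ast$ and $z$ to get the term $D-4$. Both observe that the vertices of $R$ lie in $I\cup V(T)$ and that $xz\notin R$, which gives $|N(x)\cap O|\le A_x$, and then apply Lemma~\ref{lem:exterior}. Finally, both note that a subtraction in Lemma~\ref{lem:counts} arises only when $q$ is adjacent to two vertices of $T$, which forces $q=q^\ast$ and $\{x',t\}=\{x,y\}$; you simply spell out this last step in more detail than the paper does.
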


\begin{proof}
All vertices of $R$ lie inside or on $T$, being neighbors of interior vertices.  The edges of $R$
at $x$ and the edge $xz\notin R$ do not go to $O$, so $|N(x)\cap O|\le A_x$, and similarly for
$y$.  The edge $xy$ has the common neighbors $q^\ast$ and $z$, so Lemma~\ref{lem:nobook} gives
$h\le D-4$, and Lemma~\ref{lem:exterior} gives the rest of \eqref{eq:hstar}.  An arm $qx'$ has
some $t\in C(qx')$ with $x't\in E(T)$ only if $q$ is adjacent to two vertices of $T$, that is,
only if $q=q^\ast$ and $\{x',t\}=\{x,y\}$.
\end{proof}

At $D=6$ one has $d_R(x),d_R(y)\le4$, since $x$ and $y$ are adjacent to $z$ and, by
Lemma~\ref{lem:innermost}(iv), to vertices of $O$; so $A_x,A_y\ge1$ and $h^\ast\ge0$.

\begin{lemma}\label{lem:masks}
Fix the degrees, $\sigma$ and the spine $xy$.  At most two potential rim edges other than $xy$
meet $x$ or $y$.  Call a template a \emph{base} if it contains $xy$ and every potential rim edge
that avoids $x$ and $y$; so there are at most four bases.  Every template with spine $xy$ is
obtained from a base with the same $h^\ast$ by deleting rim edges that are not edges of $T$, so
Lemma~\ref{lem:monotone} applies with the subtraction $h^\ast$ at $q^\ast x$ and $q^\ast y$.
\end{lemma}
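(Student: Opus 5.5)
The plan is to locate the spine $xy$ among the rim paths \eqref{eq:paths}, bound the rim edges touching $x$ or $y$ directly, and then check that completing a template to its base leaves $h^\ast$ unchanged.

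\emph{Location of $x$ and $y$.} The spine $xy$ is a rim edge of $q^\ast$, so $x$ and $y$ are consecutive outer neighbors of $q^\ast$. Each of $x,y$ is a vertex of $T$, so it is not interior. A shared tip $p_{qr}$ is a common neighbor of two vertices of $Q$; if $x$ were a shared tip involving some $q\ne q^\ast$, then $q$ would be adjacent to $x$. This is allowed in principle, so we work with the degrees directly.

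\emph{Counting rim edges at $x$ and $y$.} A potential rim edge joins two consecutive vertices in one of the three lists of \eqref{eq:paths}. In the list of $q^\ast$, the vertex $x$ occurs once and has at most two list-neighbors, one of which is $y$. So among the rim edges of $q^\ast$, at most one edge other than $xy$ meets $x$, and likewise at most one meets $y$.

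Now suppose $x$ appeared in the list of another vertex $q\in Q\setminus\{q^\ast\}$. Then $q$ is adjacent to $x$. A rim edge of $q$ at $x$ would join $x$ to an outer neighbor $w$ of $q$. Such a $w$ cannot be in $T$, since otherwise $q$ would be double-boundary. It could be interior, so instead I would argue via the degree of $x$. At $D=6$ we have $d_R(x)\le 4$. The vertex $x$ is already adjacent in $R$ to $q^\ast$ and $y$, so at most two further $R$-edges at $x$ remain.

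I expect the intended argument to be the following. Any potential rim edge meeting $x$ or $y$, other than $xy$, is a rim edge of the form $xw$ or $yw$ with $w$ consecutive to $x$ or $y$ in some list. In the list of $q^\ast$ the path reads $\dots,w_1,x,y,w_2,\dots$, giving at most the two edges $w_1x$ and $yw_2$. If $x$ or $y$ also lay in another list, then $q$ would be adjacent to a vertex of $T$. Planarity and facial triangles at $q^\ast$ then force $x$ to be the shared tip $p_{q^\ast q}$, and in that case $x$ sits at the end of both lists, adjacent in $q$'s list to $q$'s next outer neighbor. The delicate step is ruling out extra rim edges there. I would use that the edge $qx$ together with the face structure means that $x$'s neighbor in $q$'s list coincides with the edge counted already, or that this configuration makes $q$ double-boundary. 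The latter happens if $q$'s next neighbor along is $z$ or $y$, and we have shown $xz,yz\notin R$. This bookkeeping is the main obstacle, and I would settle it by case analysis on $\sigma$.

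\emph{Bases and the value of $h^\ast$.} Given the at most two optional edges meeting $x$ or $y$, there are at most $2^2=4$ bases: $xy$, all rim edges avoiding $x,y$, and any subset of the optional ones. Let $R$ be any template with spine $xy$. Its base is obtained by adding all rim edges avoiding $x$ and $y$ and keeping exactly the optional edges of $R$ at $x$ and $y$. This base has the same $d_R(x)$ and $d_R(y)$, hence the same $A_x,A_y$ and the same $h^\ast$ from \eqref{eq:hstar}. Moreover $R$ is obtained from it by deleting rim edges avoiding $x,y$. None of these is an edge of $T$, since every rim edge that is an edge of $T$ must lie in $\{xy,xz,yz\}$ and $xz,yz\notin R$. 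So Lemma~\ref{lem:monotone} applies. By Lemma~\ref{lem:budget}, the subtractions occur only at $q^\ast x$ and $q^\ast y$, and they are the same $h^\ast$ for $R$ and its base.
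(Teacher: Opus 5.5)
Your second half is the paper's argument and is fine: you take the base with the same rim edges at $x$ and $y$, observe that $d_R(x)$, $d_R(y)$ and hence $h^\ast$ are unchanged, and note that the deleted edges avoid $x,y$ and so are not edges of $T$.

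The gap is in the first assertion, that at most two potential rim edges other than $xy$ meet $x$ or $y$. You bound the rim edges of $q^\ast$ at $x$ and $y$. But when $x$ also lies in the list of another $q\in Q$, you leave the count open, calling it ``the delicate step'' to be settled by a case analysis on $\sigma$. The resolutions you float do not work. The neighbour $w$ of $x$ in the list of $q$ gives a genuine potential rim edge $xw$ that is not among those already counted. Nothing about $q$ being double-boundary excludes it. The appeal to $d_R(x)\le4$ does not help either. The statement is about \emph{potential} rim edges of an abstract template, and bases need not be realizable in any plane graph; it is not about edges present in $G$. As written, your count allows $w_1x$, $yw_2$ and $xw$, which is three.

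The missing observation is a per-vertex count in the lists \eqref{eq:paths}:
\begin{itemize}
  \item a middle vertex lies on two potential rim edges;
  \item a split tip, being an end of one list, lies on one;
  \item a shared tip, being an end of two lists, lies on one in each.
\end{itemize}
So every vertex lies on at most two potential rim edges. Since $xy$ is one of them at both $x$ and $y$, each has at most one more.

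In your problematic case this is immediate. If $x=p_{q^\ast q}$, then, as you yourself note, $x$ is an end of the list of $q^\ast$. Its only neighbour there is therefore $y$, and the edge $w_1x$ does not exist. The single extra potential rim edge at $x$ comes from the list of $q$. No planarity and no case analysis on $\sigma$ is needed.
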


\begin{proof}
A middle vertex of one of the lists \eqref{eq:paths} lies on two potential rim edges, a split tip
on one, and a shared tip on one for each of its two vertices of $Q$.  Since $xy$ is one of these
at $x$ and at $y$, at most one other potential rim edge meets each of $x$ and $y$.  Given a
template with spine $xy$, take the base with the same rim edges at $x$ and $y$; the template
arises from it by deleting rim edges that avoid $x$ and $y$.  These are not edges of $T$, and
deleting them leaves $d_R(x)$, $d_R(y)$ and hence $h^\ast$ unchanged.  (Deleting a rim edge at $x$
or $y$ instead could raise $h^\ast$ and so lower a bound; this is why the bases include every
choice there.)
\end{proof}

For each degree triple in \eqref{eq:triples}, each $\sigma$, and each potential rim edge chosen as
the spine, let $r\le2$ be the number of other potential rim edges meeting the ends of the spine;
this gives $2^r$ bases.  In total there are $1{,}220$ bases.

\begin{lemma}\label{lem:onespine}
For each of the $1{,}220$ bases $R$ at $D=6$, the graph $H_R$ is colorable from all lists of
sizes at least the bounds of Lemma~\ref{lem:counts}, with $h^\ast$ subtracted as in
Lemma~\ref{lem:budget}.  This is checked by computer: for $971$ bases we give a removal order,
and for the other $249$ a nonzero coefficient (Appendix~\ref{app:comp}).  Consequently, at
$D=6$, if $Q$ has exactly one double-boundary vertex and a vertex of degree at most five, every
coloring of $G-Q$ extends to $G$.
\end{lemma}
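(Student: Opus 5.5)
The lemma has two parts: a finite, computer-verified claim about the $1{,}220$ bases, and a consequence for triangles with exactly one double-boundary vertex. The plan is to mirror the proof of Lemma~\ref{lem:generic}, with the spine handled by Lemmas~\ref{lem:budget} and~\ref{lem:masks}.

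\emph{Computational part.} For each degree triple in \eqref{eq:triples}, each $\sigma$, and each potential rim edge $xy$ chosen as spine, generate the $2^r$ bases. For each base $R$, build $H_R$ using the criterion of Lemma~\ref{lem:normal}. Compute the bounds of Lemma~\ref{lem:counts}: for edges inside $Q$, $2D-1$ minus the partners in $R$ outside $U$; for arms, $f(qx)$ from \eqref{eq:arm}. Then lower the two arms $q^\ast x$ and $q^\ast y$ by $h^\ast$ from \eqref{eq:hstar}, with $A_x=D-1-d_R(x)$ and $A_y=D-1-d_R(y)$. Here $q^\ast$ is the vertex of $Q$ owning the spine as a rim edge.

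First search for a removal order as in Lemma~\ref{lem:greedy}, by repeatedly deleting a vertex whose remaining degree is below its bound. This should succeed in most cases; reportedly it does for $971$ bases. For the remaining $249$ bases, find exponents $t_i\le f_i-1$ summing to $|E(H_R)|$ whose coefficient in $P_{H_R}$ is nonzero, and certify it via Lemma~\ref{lem:cn}. To reduce the work, first peel off greedily removable vertices, then expand the polynomial of the residual core.

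\emph{Consequence.} Let $Q$ contain a vertex of degree at most five and exactly one double-boundary vertex $q^\ast$, with neighbors $x,y$ on $T$. All degrees lie in $\{4,5,6\}$; after permuting $a,b,c$ they are sorted as in \eqref{eq:triples}. The template of $Q$ contains $xy$ as a rim edge of $q^\ast$. By the discussion preceding Lemma~\ref{lem:budget}, no other edge of $T$ lies in $R$.

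By Lemma~\ref{lem:masks}, this template arises from a base with the same $h^\ast$ by deleting rim edges that are not edges of $T$. So Lemma~\ref{lem:monotone}, applied with the subtraction $h^\ast$ at $q^\ast x$ and $q^\ast y$, transfers colorability from the base to $H_R$ with the bounds $f$. Lemma~\ref{lem:budget} shows these bounds, with $h^\ast$ subtracted only at these two arms, are valid lower bounds for the actual lists. Finally, Lemma~\ref{lem:normal} identifies $H_R$ with the true conflict graph on $U$, and Lemma~\ref{lem:lists} converts a list coloring into an extension of the coloring of $G-Q$.

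\emph{Main obstacle.} The hard part is the $249$ cases needing the Nullstellensatz. There the polynomial expansion may be large, so the choice of exponent vector and the peeling must be organized carefully, for instance by trying exponents that saturate the smallest lists first. A secondary point is to check carefully that the orientation conventions for reversed lists in \eqref{eq:paths} match how the spine is enumerated, so that every actual configuration is covered by some base.
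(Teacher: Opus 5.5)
Your proposal is correct and follows the paper's own argument. The paper proves the finite claim by checking the bases with removal orders or nonzero Nullstellensatz coefficients (Appendix~\ref{app:comp}). It then transfers this to the actual template of $Q$ via Lemmas~\ref{lem:masks} and~\ref{lem:monotone}, gets validity of the bounds from $h\le h^\ast$ (Lemma~\ref{lem:budget}), uses Lemma~\ref{lem:normal} to identify the conflict graph, and uses Lemma~\ref{lem:lists} for the extension — all steps you identified.

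Your variant of peeling greedily removable vertices before applying Lemma~\ref{lem:cn} to the remaining core is also sound: color the core first, then the peeled vertices in reverse order. The paper's certificates, however, are stated for the whole graph $H_R$.
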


\begin{proof}
Since $h\le h^\ast$, the actual lists are at least as large as these bounds.  By
Lemma~\ref{lem:masks}, colorability passes from the bases to every template with a spine, and
Lemma~\ref{lem:lists} gives the extension.
\end{proof}

\subsection{Proof for \texorpdfstring{$D=6$}{D=6}}

\begin{theorem}\label{thm:D6}
Theorem~\ref{thm:main} holds for $D=6$.
\end{theorem}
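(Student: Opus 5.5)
We argue by contradiction: take a minimal counterexample $G$ for $D=6$, which has the properties of Corollary~\ref{cor:structure}, fix an embedding, and apply only the face rule (Lemma~\ref{lem:facerule}).

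\emph{Step 1: a negative eligible vertex exists.}
Without a separating triangle, every face ends with charge zero and the total is $-12$ (Lemma~\ref{lem:charge}). Hence some vertex is negative, and every vertex is eligible.

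With a separating triangle $T$, suppose no eligible vertex is negative. We verify the hypotheses of Lemma~\ref{lem:equality}:
\begin{enumerate}[label=\textup{(\alph*)}]
  \item faces retain $b_f(2-6/\ell)$;
  \item eligible vertices with no neighbor on $T$ are nonnegative by assumption;
  \item the same holds for eligible vertices with exactly one neighbor on $T$, so the charge $-1$ clause is vacuous;
  \item a double-boundary vertex has charge at least $d(v)-6\ge-2$, with equality only for degree four and all incident faces triangles, exactly as in the proof of Theorem~\ref{thm:D9}.
\end{enumerate}
Lemma~\ref{lem:equality} then produces an octahedral cap, and Lemma~\ref{lem:octcap} contradicts minimality. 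So in either case some eligible vertex $v$ has negative charge.

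\emph{Step 2: choose the triangle.}
Since $v$ is negative, $d(v)\in\{4,5\}$. By the discussion before Lemma~\ref{lem:selection}, its link is $C_4$, $C_5$ or $P_5$.

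Lemma~\ref{lem:selection} gives a facial triangle $Q=vpq$ with $p,q\in I$ containing at most one double-boundary vertex, and none if there is no separating triangle. All degrees lie in $\{4,5,6\}$, and $Q$ contains $v$, of degree at most five. So after sorting, the degree triple is one of those in \eqref{eq:triples}.

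\emph{Step 3: reduce $Q$.}
Delete $Q$ and color $G-Q$ by minimality; Lemma~\ref{lem:safe} makes this a valid partial coloring of $G$. By Lemma~\ref{lem:normal}, the conflict graph on the uncolored edges is $H_R$, where $R$ is the template of $Q$. There are two cases.

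\emph{No double-boundary vertex.} We claim $R$ is generic. A rim edge of $T$ would join two neighbors on $T$ of some $q\in Q$, making $q$ double-boundary. Lemma~\ref{lem:generic} then extends the coloring.

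\emph{Exactly one double-boundary vertex $q^\ast$.} The spine is $xy$. We check that no other rim edge lies in $T$: the edges $xz,yz\notin R$ by the remark preceding Lemma~\ref{lem:budget}. Lemma~\ref{lem:onespine} then extends the coloring.

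Either way $G$ is colorable, contradicting minimality.

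The main obstacle is the bookkeeping in Step 1. We must confirm that (c) and (d) of Lemma~\ref{lem:equality} hold under the face rule alone. We must also confirm that, with a separating triangle, the vertex supplied by Step 1 is interior and eligible as Lemma~\ref{lem:selection} requires, and not a double-boundary vertex. This holds because we only ask eligible vertices to be negative, and double-boundary vertices are handled by (d). The remaining steps just cite the computer-verified Lemmas~\ref{lem:generic} and~\ref{lem:onespine}.
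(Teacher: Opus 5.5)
Your proposal is correct and follows the paper's proof essentially step for step. The face rule together with Lemma~\ref{lem:equality} and Lemma~\ref{lem:octcap} forces a negative eligible vertex, Lemma~\ref{lem:selection} supplies the facial triangle $Q$, and the observation that a rim edge lying in $T$ makes its vertex of $Q$ double-boundary splits the reduction between Lemma~\ref{lem:generic} and Lemma~\ref{lem:onespine}, exactly as in the paper.
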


\begin{proof}
Suppose a minimal counterexample exists; Corollary~\ref{cor:structure} applies.  Apply the face
rule.  If every eligible vertex is nonnegative, then
without a separating triangle all charges are nonnegative, contradicting $-12$.  With one, the
hypotheses of Lemma~\ref{lem:equality} hold: faces retain $b_f(2-6/\ell)$, vertices with at most
one neighbor on $T$ are eligible and hence nonnegative, and a double-boundary vertex has charge
at least $d(v)-6\ge-2$, with equality only for degree four and triangular incident faces.  The
octahedral cap then contradicts minimality.

Otherwise let $v$ be a negative eligible vertex.  By Lemma~\ref{lem:selection} there is a facial
triangle $Q$ containing $v$, inside $T$ if $T$ exists, with at most one double-boundary vertex.
By Lemma~\ref{lem:normal}, $Q$ is described by its template.  Its degrees lie in $\{4,5,6\}$ with
$d(v)\le5$, so after renaming they form one of the triples \eqref{eq:triples}.  Some rim edge of
$q\in Q$ is an edge of $T$ exactly when $q$ is double-boundary, so the template of $Q$ is generic
or has exactly one spine.  Lemma~\ref{lem:generic} or Lemma~\ref{lem:onespine} then extends a
coloring of $G-Q$, contradicting minimality.
\end{proof}

\section{Proof of Theorem~\ref{thm:main} and remarks}\label{sec:end}

\begin{proof}[Proof of Theorem~\ref{thm:main}]
The cases $D\ge9$, $D=8$, $D=7$ and $D=6$ are Theorems~\ref{thm:D9}, \ref{thm:D8}, \ref{thm:D7}
and~\ref{thm:D6}.
\end{proof}

\emph{Degree six.}  The argument of Lemma~\ref{lem:H4} does not carry over to $D=6$, even in
graphs with all the properties of Corollary~\ref{cor:structure}.  Take a
center $v$, vertices $u_i,w_i,z_i$ ($i\in\mathbb Z/4$) and the edges $vu_i$, $u_iu_{i+1}$,
$u_iw_i$, $u_iw_{i-1}$, $u_iz_i$, $z_iw_i$, $z_iw_{i-1}$; they triangulate a disk bounded by the
cycle $z_0w_0z_1w_1z_2w_2z_3w_3$.  Gluing two copies of this disk along that cycle gives a
$3$-connected triangulation $G$ of the sphere with $18$ vertices, $\Delta(G)=6$, minimum degree
four and no separating triangle.  There is an $11$-color $D$-coloring of $G-v$ that does not
extend after the rim of $v$ is uncolored: the opposite rim lists are disjoint and all eight lists
lie in seven colors.  (The script \texttt{fourwheel\_d6\_example.py} supplied with this paper
checks these claims.)  The graph $G$ itself has a $D$-coloring, even with ten colors, so the
example rules out only this reduction.

\emph{List versions.}  The computer checks in Section~\ref{sec:d6} apply to arbitrary lists, but
several steps use a common palette: the separator gluing of Lemma~\ref{lem:3conn}, the explicit
colorings in Lemmas~\ref{lem:nobook} and~\ref{lem:deg3}, the color exchange in
Lemma~\ref{lem:deg3}, and the comparisons of lists coming from one coloring in
Lemmas~\ref{lem:deg3easy}, \ref{lem:deg3} and~\ref{lem:H4}.  We do not know whether the
$(2D-1)$-list version holds.

\emph{Other surfaces.}  The proof uses that triangles bound disks and that the global charge is
negative; both can fail on other surfaces.

\appendix

\section{Computational verification}\label{app:comp}

This appendix describes the computer checks behind Lemmas~\ref{lem:generic}
and~\ref{lem:onespine}, all at $D=6$.

\subsection*{A.1. The templates}
A template is given by a degree triple from \eqref{eq:triples}, the vector $\sigma$, encoded as
$\sigma_{ab}+2\sigma_{ac}+4\sigma_{bc}$, and the set of rim edges present, encoded as a bit mask
over the potential rim edges taken along the lists \eqref{eq:paths}, first for $a$, then $b$,
then $c$.  The vertices are named $a,b,c$, then $a0,a1,\dots$ for the middle vertices of the list
of $a$ (similarly for $b,c$), and $\mathrm{pab}$ for a shared tip or
$\mathrm{pab\_a},\mathrm{pab\_b}$ for split tips (similarly for $ac,bc$).  The edges of $U$ are
ordered lexicographically by the names of their ends, the smaller name first; this is also the
order of the variables of the graph polynomial.  Another order, with each exponent kept on the
same edge, would change only the sign of the coefficient.
\begin{itemize}
  \item \emph{Generic templates.}  For each triple in \eqref{eq:triples} and each $\sigma$, build
        the full template $R$, the ordered set $U$, the graph $H_R$, and the bounds $f$ of
        Lemma~\ref{lem:counts}.  This gives $72$ templates.
  \item \emph{Bases.}  For each triple, each $\sigma$ and each potential rim edge $xy$ as spine,
        list the (at most two) other potential rim edges meeting $x$ or $y$; for each subset,
        delete exactly that subset from the full template; compute $d_R(x),d_R(y)$ and $h^\ast$
        from \eqref{eq:hstar}; compute $U$, $H_R$ and $f$, subtracting $h^\ast$ at the arms
        $q^\ast x$ and $q^\ast y$, where $q^\ast$ is the vertex of $Q$ whose rim edge is $xy$.
        This gives $\sum 2^{r}=1{,}220$ bases.
\end{itemize}
Some of these templates may not occur in any plane graph; checking them does no harm.

\subsection*{A.2. Certificates}
For a graph $H$ on $\{0,\dots,n-1\}$ with bounds $f$, a \emph{certificate} that $H$ is colorable
from all lists of sizes at least $f$ is one of the following:
\begin{itemize}
  \item a \emph{greedy certificate}: a removal order $v_1,\dots,v_n$ with
        $|N_H(v_i)\cap\{v_{i+1},\dots,v_n\}|<f(v_i)$ for all $i$ (Lemma~\ref{lem:greedy});
  \item a \emph{CN certificate} (for Combinatorial Nullstellensatz): a vector $t$ with
        $0\le t_i<f_i$ and $\sum_i t_i=|E(H)|$, together with the nonzero integer
        $c=[X^t]\prod_{ij\in E(H),\,i<j}(X_i-X_j)$ (Lemma~\ref{lem:cn}).
\end{itemize}
Table~\ref{tab:counts} gives the numbers of certificates of each kind.

\begin{table}[htb]
\caption{Numbers of certificates.}
\label{tab:counts}
\centering
\begin{tabular}{lrrrrrr}
\toprule
 & \multicolumn{3}{c}{Generic} & \multicolumn{3}{c}{Bases}\\
\cmidrule(lr){2-4}\cmidrule(lr){5-7}
Degrees & Greedy & CN & Total & Greedy & CN & Total\\
\midrule
444 & 8 & 0 & 8 & 54 & 0 & 54\\
445 & 8 & 0 & 8 & 84 & 0 & 84\\
446 & 7 & 1 & 8 & 109 & 7 & 116\\
455 & 7 & 1 & 8 & 105 & 9 & 114\\
456 & 7 & 1 & 8 & 130 & 16 & 146\\
466 & 6 & 2 & 8 & 143 & 35 & 178\\
555 & 7 & 1 & 8 & 120 & 24 & 144\\
556 & 5 & 3 & 8 & 120 & 56 & 176\\
566 & 4 & 4 & 8 & 106 & 102 & 208\\
\midrule
Total & 59 & 13 & 72 & 971 & 249 & 1220\\
\bottomrule
\end{tabular}
\end{table}

\subsection*{A.3. Verification}
For each template the verifier rebuilds $R$ from the degree triple, $\sigma$, the mask and the
spine, using code written from this paper; it recomputes $U$, $H_R$ and $f$, compares them with
the record, and checks that the records cover each expected template exactly once.  It then
checks each greedy certificate, and for each CN certificate the bounds on $t$ and the
coefficient.  The coefficient is computed exactly, over the integers, by three methods: a
frontier dynamic program over the linear factors; elimination of one vertex at a time via
\[
  c(H,t)=(-1)^{b+d-t_v}\sum_{S\subseteq N_H(v),\,|S|=d-t_v} c\bigl(H-v,\ t|_{V(H)\setminus\{v\}}-\mathbf 1_S\bigr),
\]
where $d=d_H(v)$ and $b=|\{u\in N_H(v):u<v\}|$, with $c=1$ for the empty graph and empty target and
$c=0$ whenever some target exponent is negative or $t_v>d$; and the Alon--Tarsi formula
$c=(-1)^{\beta}(\mathrm{EE}-\mathrm{EO})$, where for an orientation of $H$ with out-degrees $t$,
$\beta$ is the number of its arcs from a larger to a smaller vertex and $\mathrm{EE}$,
$\mathrm{EO}$ count its spanning Eulerian subdigraphs with an even and an odd number of arcs
($c=0$ if there is no such orientation).  The first two methods come from the
code that generated the certificates, and the third was written independently.  The three values
agree with each other and with the record.

\subsection*{A.4. Reproduction}
The certificates and the verifier are the ancillary files of the arXiv version of this paper,
arXiv:2610.33322.  The
records are \texttt{d6\_generic\_sharp.jsonl} ($72$ lines) and \texttt{d6\_one\_baseline.jsonl}
($1{,}220$ lines).  The verifier is \texttt{check\_records.py}, with \texttt{tmpl.py} for the
templates and \texttt{frontier.py}, \texttt{independent\_coefficient.py} and \texttt{atcoef.py} for
the three coefficient methods.  Run in the directory of these files, the command
\[
  \texttt{python3 check\_records.py}
\]
performs all the checks of A.3; it uses only the Python standard library and takes under a minute
on a laptop.  The script \texttt{fourwheel\_d6\_example.py} checks the example of
Section~\ref{sec:end}.

\section*{Use of AI tools}
The author used GPT-6 Pro (OpenAI) to develop the proof from the author's key ideas and to
generate the computer certificates, and Claude Opus 5.5 (Anthropic) to check the arguments and
the certificates.  Both tools helped prepare the manuscript.  The author has checked the entire
proof and takes full responsibility for its content.


\begin{thebibliography}{99}

\bibitem{Alon} N. Alon, Combinatorial Nullstellensatz, \emph{Combin. Probab. Comput.} 8 (1999)
7--29.

\bibitem{AlonTarsi} N. Alon and M. Tarsi, Colorings and orientations of graphs,
\emph{Combinatorica} 12 (1992) 125--134.

\bibitem{CranstonWest} D. W. Cranston and D. B. West, An introduction to the discharging method
via graph coloring, \emph{Discrete Math.} 340 (2017) 766--793.

\bibitem{Diestel} R. Diestel, \emph{Graph Theory}, 5th ed., Graduate Texts in Mathematics 173,
Springer, 2017.

\bibitem{Hall} P. Hall, On representatives of subsets, \emph{J. London Math. Soc.} 10 (1935)
26--30.

\bibitem{HKW} X. Hu, J. Kong and Y. Wang, $D$-coloring of planar graphs, arXiv:2607.14837, 2026.

\bibitem{TW} L. Tian and R. Wang, A stronger upper bound on the $D$-chromatic index,
arXiv:2609.01875, 2026.

\bibitem{Wang} R. Wang, Proper edge coloring with rainbow diamonds, arXiv:2606.06831, 2026.

\end{thebibliography}
\end{document}